\title{Odd Degree Isolated Points on $X_1(N)$ with Rational $j$-invariant}
\author{Abbey Bourdon}
\address{Wake Forest University, Department of Mathematics and Statistics, Winston-Salem, NC 27109, USA}
\email{bourdoam@wfu.edu}
\urladdr{http://users.wfu.edu/bourdoam/}
\author{David R. Gill}
\email{gilldr19@wfu.edu}
\author{Jeremy Rouse}
\email{rouseja@wfu.edu}
\urladdr{http://users.wfu.edu/rouseja/}
\author{Lori D. Watson}
\email{watsonl@wfu.edu}
\urladdr{https://loridwatson.com}
\DeclareMathAlphabet{\curly}{U}{rsfs}{m}{n}
\newtheorem{thm}{Theorem}[]
\newtheorem{cor}[thm]{Corollary}
\newtheorem{prop}[thm]{Proposition}
\newtheorem{lem}[thm]{Lemma}
\theoremstyle{definition}
\theoremstyle{remark}
\newtheorem{rmk}[thm]{Remark}
\mathchardef\mhyphen="2D
\begin{document}
\newcommand\leg{\genfrac(){.4pt}{}}
\renewcommand{\labelenumi}{(\roman{enumi})}
\def\ff{\mathfrak{f}}
\def\F{\mathbb{F}}
\def\N{\mathbb{N}}
\def\Q{\mathbb{Q}}
\def\Z{\mathbb{Z}}
\def\R{\mathbb{R}}
\def\OO{\mathcal{O}}
\def\aa{\mathfrak{a}}
\def\pp{\mathfrak{p}}
\def\qq{\mathfrak{q}}
\def\Aa{\curly{A}}
\def\Dd{\curly{D}}
\def\Gg{\curly{G}}
\def\Pp{\curly{P}}
\def\Ss{\curly{S}}
\def\End{\mathrm{End}}
\def\M{\curly{M}}
\newcommand{\Ok}{\mathcal{O}_K}
\newcommand{\tors}{\operatorname{tors}}
\newcommand{\Pic}{\operatorname{Pic}}
\newcommand{\ord}{\operatorname{ord}}
\newcommand{\GL}{\operatorname{GL}}
\newcommand{\Gal}{\operatorname{Gal}}
\newcommand{\gcdl}{\operatorname{gcd}}
\newcommand{\im}{\operatorname{im}}
\newcommand{\Supp}{\operatorname{Supp}}
\newcommand{\Aut}{\operatorname{Aut}}
\newcommand{\gon}{\operatorname{gon}}
\newcommand{\proj}{\operatorname{proj}}

\begin{abstract}
Let $C$ be a curve defined over a number field $k$. We say a closed point $x\in C$ of degree $d$ is isolated if it does not belong to an infinite family of degree $d$ points parametrized by the projective line or a positive rank abelian subvariety of the curve's Jacobian. Building on work of \cite{BELOV}, we characterize elliptic curves with rational $j$-invariant which give rise to an isolated point of odd degree on $X_1(N)/\Q$ for some positive integer $N$. 
\end{abstract}

\maketitle
\section{Introduction}
Let $C$ be a curve defined over a number field $k$, and let $x \in C$ be a closed point of degree $d$. Following \cite{BELOV}, we say $x$ is \textbf{isolated} if it does not belong to an infinite family of degree $d$ points parametrized by $\mathbb{P}^1$ or a positive rank abelian subvariety of the curve's Jacobian. (See $\S2.5$ for details.) Motivated by the well-known problem of classifying torsion subgroups of elliptic curves over number fields, we seek to describe isolated points on the modular curve $X_1(N)/\Q$. As a first case, we focus on those isolated points corresponding to elliptic curves with rational $j$-invariant. That is, we consider isolated points $x \in X_1(N)$ such that $j(x) \in \Q$, where $j:X_1(N) \rightarrow X_1(1) \cong \mathbb{P}^1$ denotes the $j$-map. Though there are infinitely many isolated points with this condition---indeed, there are infinitely many isolated points above any $j$-invariant associated to an elliptic curve with complex multiplication (CM) by \cite[Thm. 7.1]{BELOV}---there is strong evidence that all isolated points $x \in X_1(N)$ with $j(x) \in \Q$ arise from points on one of a \emph{finite} number of elliptic curves, even as $N$ ranges over all positive integers.

\begin{thm}[Bourdon, Ejder, Liu, Odumodu, Viray \cite{BELOV}] \label{BELOVthm}
Let $\mathcal{I}$ denote the set of all isolated points on all modular curves $X_1(N)$ for $N \in \mathbb{Z}^+$. Suppose there exists a constant $C=C(\Q)$ such that for all non-CM elliptic curves $E/\Q$, the mod $p$ Galois representation associated to $E$ is surjective for primes $p>C$. Then $j(\mathcal{I}) \cap \Q$ is finite.
\end{thm}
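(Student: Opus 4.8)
The plan is to separate the CM and non-CM $j$-invariants. A rational CM $j$-invariant corresponds to one of the thirteen imaginary quadratic orders of class number one, so the CM points contribute only finitely many values to $j(\mathcal{I}) \cap \Q$ and require no further attention (indeed each supports infinitely many isolated points by \cite[Thm. 7.1]{BELOV}). For the non-CM case I would encode a point $x = (E,P) \in X_1(N)$ by the image $G_E = \rho_E(G_\Q) \subseteq \GL_2(\hat\Z)$ of the adelic representation; for $E/\Q$ the degree of $x$ is the size of the orbit of $\pm P$ under $G_E$, and the objective is to show that every non-CM isolated point descends along a degeneracy map to an isolated point of bounded level and bounded degree, without changing its $j$-invariant.

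The first step is a uniform bound on the level of $G_E$. The hypothesis gives surjectivity of $\bar\rho_{E,p}$ for all $p > C$, and for $p \ge 5$ the standard lifting result then forces the full $p$-adic image $\mathrm{GL}_2(\Z_p)$, so $G_E$ can fail to be the full preimage of its reduction only at primes in the finite set $\{2,3\} \cup \{p \le C\}$. Using the known uniform bounds on the $2$- and $3$-adic images of non-CM curves over $\Q$, together with the classification of the possible $p$-adic images at the primes $5 \le p \le C$, I would conclude that the level of $G_E$ divides a constant $M_0 = M_0(C)$ independent of $E$.

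The structural input is a descent property for isolated points. Let $N_0 \mid N$ be the largest divisor of $N$ supported on the primes dividing $M_0$, and let $\pi \colon X_1(N) \to X_1(N_0)$ be the degeneracy map $P \mapsto \tfrac{N}{N_0}P$. At each prime $\ell \mid N/N_0$ the image $G_E$ is full, so the fiber $\pi^{-1}(\pi(x))$ consists of the single closed point $x$; since having full image is an open condition, the same holds for the generic member of any family on $X_1(N_0)$ through $\pi(x)$. Hence an infinite family of degree-$\deg(\pi(x))$ points through $\pi(x)$—parametrized by $\mathbb{P}^1$ or a positive-rank abelian subvariety—pulls back fiberwise to an infinite family of degree-$\deg(x)$ points through $x$. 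Consequently $x$ isolated forces $\pi(x) = (E, \tfrac{N}{N_0}P)$ isolated; as $N_0 \mid M_0$, every non-CM isolated point thus descends to an isolated point on one of the finitely many curves $X_1(N_0)$ with $N_0 \mid M_0$, with the same $j$-invariant.

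Finally, such a point has degree at most $|\GL_2(\Z/M_0\Z)|$, and a fixed curve carries only finitely many isolated points of bounded degree by Faltings's theorem on rational points of subvarieties of abelian varieties (the non-isolated points in each degree filling out the finitely many $\mathbb{P}^1$- and positive-rank families). Therefore only finitely many $j$-invariants occur, and combined with the CM case this gives the finiteness of $j(\mathcal{I}) \cap \Q$. I expect the main obstacle to be the uniform level bound of the second step: converting the pointwise surjectivity supplied by the hypothesis into a single modulus $M_0$ valid for all non-CM $E/\Q$ requires both the $p$-adic lifting theorem for $p \ge 5$ and genuine input at $p \in \{2,3\}$ and the exceptional primes $p \le C$, and it is exactly here that the uniformity hypothesis is indispensable.
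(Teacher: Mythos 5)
This theorem is imported from \cite{BELOV}; the present paper gives no proof of it, so I am measuring your proposal against the argument in \cite{BELOV}, whose overall architecture---CM/non-CM split, a level statement for $\im\rho_E$, descent of isolated points via the criterion of Theorem \ref{LevelLowering}, and finiteness of isolated points on each fixed curve (Theorem \ref{thm:FiniteIsolated}(2))---your outline does share. The genuine gap is in your second step. The assertion that the adelic image $G_E$ has level dividing a constant $M_0(C)$ independent of $E$ is false, even granting Serre uniformity and even when every $\ell$-adic projection is surjective. By the Weil pairing and the containment $\Q(\sqrt{\Delta_E})\subseteq \Q(E[2])$, one has $\Q(E[2])\cap\Q(\zeta_m)\neq\Q$ for $m$ the conductor of $\Q(\sqrt{\Delta_E})$; for a Serre curve (adelic index $2$, the generic case) the adelic level is therefore divisible by that conductor, which is unbounded as $E$ varies. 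So your $M_0$ does not exist, and the ensuing claim that ``at each prime $\ell\mid N/N_0$ the image $G_E$ is full, so the fiber $\pi^{-1}(\pi(x))$ consists of the single closed point $x$'' is unjustified: surjectivity of each $\ell$-adic projection does not make $G_E$ a product, and an entanglement between $\ell$ and the primes of $N_0$ could a priori shrink the orbit of $P$, i.e.\ give $\deg(x)<\deg(\pi)\cdot\deg(\pi(x))$, which is precisely the hypothesis Theorem \ref{LevelLowering} requires.

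What \cite{BELOV} actually establishes (their Proposition 6.1, invoked repeatedly in this paper) is the weaker, correct statement: the level of $\rho_{E,m\cdot n^{\infty}}$ is controlled by $m$, the level of $\rho_{E,n^{\infty}}$, and at most one extra power of each prime dividing $n$. The point is that for $\ell\geq 5$ with surjective $\ell$-adic image, any common quotient of $\GL_2(\Z_{\ell})$ and the prime-to-$\ell$ image is essentially abelian (Goursat), so entanglement exists but is shallow; one then checks that the stabilizer of a point of order $N$ surjects onto these abelian quotients, so the degree multiplicativity $\deg(x)=\deg(f)\cdot\deg(f(x))$ survives and the isolated point descends to one of finitely many curves $X_1(N_0)$. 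Repairing your argument means replacing the uniform adelic level bound---which you correctly flag as the crux but misstate---with this entanglement analysis. The remaining steps of your outline (the thirteen rational CM $j$-invariants, and finiteness of isolated points on each fixed curve) are sound.
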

\noindent The existence of a constant $C$ as in the theorem statement was first suggested in a question of Serre \cite{serre72}, and in \cite{serre81} he asked whether $C(\Q)=37$. Significant partial results combined with computational evidence have led to this increasingly standard assumption becoming known as Serre's Uniformity Conjecture. See for example \cite{BP11}, \cite{BPR13}, \cite{Balakrishnan}, \cite{ZywinaImages}, \cite{sutherland}, \cite{lemosTrans}, \cite{lemosZ}.

A natural problem in light of Theorem \ref{BELOVthm} is to identify the (likely finite) set $j(\mathcal{I}) \cap \Q$. By \cite[Thm. 7.1]{BELOV}, the set contains all 13 CM $j$-invariants in $\Q$ as well as at least two non-CM $j$-invariants: $-3^2\cdot 5^6/2^3$, corresponding {to two isolated points} of degree 3 on $X_1(21)$ identified by Najman \cite{najman16}, and $-7\cdot 11^3$, corresponding to degree 6 points on $X_1(37)$ lying above one of the two non-cuspidal rational points on $X_0(37)$. Here, we give an unconditional version of Theorem \ref{BELOVthm} by restricting our attention to points of odd degree. Our main result is the following:

{
\begin{thm} \label{MainThm}
Let $\mathcal{I}_{odd}$ denote the set of all isolated points of odd degree on all modular curves $X_1(N)$ for $N \in \mathbb{Z}^+$. Then $j(\mathcal{I}_{odd}) \cap \Q$ contains at most the $j$-invariants in the following list:
\begin{center}
\begin{tabular}{c|c}
non-CM $j$-invariants              & CM $j$-invariants       \\ \hline
$-3^2\cdot 5^6/2^3$                &    $-2^{18}\cdot3^3\cdot5^3$   \\ \hline
     $3^3\cdot 13/2^2$            &   $-2^{15}\cdot3^3\cdot5^3\cdot11^3$        \\ \hline
 &  $-2^{18}\cdot3^3\cdot5^3\cdot23^3\cdot29^3$  \\                     
\end{tabular}
\end{center}
Conversely, $j(\mathcal{I}_{odd}) \cap \Q$ contains $-3^2\cdot 5^6/2^3$ and $3^3\cdot 13/2^2$.
\end{thm}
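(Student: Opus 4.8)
The plan is to prove the theorem in two halves, matching its two-part structure. The forward direction (the upper bound) is the hard part: I must show that \emph{any} isolated point of odd degree on any $X_1(N)$ with rational $j$-invariant must have its $j$-invariant lie in the explicit six-element list. The converse is a finite verification: I must exhibit, for each of the two asserted non-CM values, an honest isolated point of odd degree on some $X_1(N)$.

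\begin{proof}[Proof sketch of the forward direction]
Let $x \in X_1(N)$ be an isolated point of odd degree with $j(x) = j_0 \in \Q$, corresponding to an elliptic curve $E/\Q$ (or a $\overline{\Q}$-curve with model over $\Q$). First I would separate the CM and non-CM cases. In the CM case, I would invoke the explicit description of isolated points above CM $j$-invariants from \cite[Thm.~7.1]{BELOV} and impose the odd-degree constraint; the degree of a point on $X_1(N)$ is governed by the image of the mod-$N$ Galois representation, which for CM curves is constrained by the Galois action on torsion relative to the CM order, so that oddness of $[\Q(x):\Q]$ forces strong $p$-adic restrictions (ramification and splitting conditions at primes dividing $N$) that survive for only finitely many CM discriminants---pinning down exactly the three listed CM values. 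In the non-CM case, the strategy is to reduce the study of isolatedness to the image $G = \im(\rho_{E,N})$ in $\GL_2(\Z/N\Z)$. The key mechanism, developed in \cite{BELOV}, is that an odd-degree isolated point forces the image of Galois on torsion to be small in a precise way: the degree $[\Q(x):\Q]$ equals the index-type quantity measuring the $G$-orbit of a point of exact order $N$, and isolatedness rules out the orbit being part of a positive-dimensional family (no $\Q$-rational degree-$d$ map to $\P^1$, no positive-rank abelian subvariety contribution). Because the degree is odd, I can work prime by prime: the $2$-part of the image must be as large as possible so that no factor of $2$ is introduced into the degree, which is a severe constraint.

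\smallskip
\noindent The core of the argument is then to bound the level. Under the odd-degree hypothesis, I would show that isolatedness propagates to an isolated or sporadic point on a modular curve of prime-power level $X_1(p^k)$ (or on an intermediate curve $X_H$), and apply the known classification of mod-$p$ images of non-CM elliptic curves over $\Q$ together with gonality bounds for $X_1(N)$: by Abramovich-type lower bounds on the $\Q$-gonality, for large $N$ any point of odd degree $d$ must lie in an infinite family (violating isolatedness) unless $d$ is comparable to the gonality, forcing $N$ to be bounded. This reduces the problem to a finite, explicit list of candidate levels and candidate mod-$N$ images $H \leq \GL_2(\Z/N\Z)$. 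For each surviving $(N,H)$ I would compute the corresponding curve $X_H$, determine the points of odd degree lying above $\Q$-rational $j$-invariants, and test each for isolatedness using the criterion of \cite{BELOV} (checking whether the point is $\P^1$-isolated via the gonality/Riemann--Roch obstruction, and $\mathrm{AV}$-isolated by analyzing the Mordell--Weil rank of the relevant quotient of the Jacobian). The nontrivial non-CM survivors are exactly $j_0 = -3^2\cdot 5^6/2^3$ and $j_0 = 3^3\cdot 13/2^2$.

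\smallskip
\noindent I expect the main obstacle to be the gonality/finiteness step: bounding $N$ so that only finitely many levels remain, while simultaneously ensuring that no isolated point of odd degree is overlooked at small level. This requires combining the Serre-uniformity-independent classification of images (available precisely because oddness of the degree eliminates the problematic $2$-adic and residually-degenerate cases that forced the conditional hypothesis in Theorem~\ref{BELOVthm}) with careful, curve-by-curve isolatedness tests. The delicacy is that a point can fail to be $\P^1$-isolated only via a low-degree map, and can fail to be $\mathrm{AV}$-isolated only through a positive-rank abelian subvariety, so ruling out candidates demands both a gonality computation and a rank computation for each $X_H$ in the finite list.
\end{proof}

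\begin{proof}[Proof sketch of the converse]
It remains to confirm that the two asserted non-CM values genuinely occur. For $j_0 = -3^2\cdot 5^6/2^3$, I would invoke Najman's computation \cite{najman16}: there are degree-$3$ points on $X_1(21)$ above this $j$-invariant, and these are shown to be isolated in \cite{BELOV}, so $3$ is odd and the value lies in $j(\mathcal{I}_{odd})\cap\Q$. For $j_0 = 3^3\cdot 13/2^2$, I would exhibit an explicit curve $E/\Q$ with this $j$-invariant, identify an integer $N$ and a point $x \in X_1(N)$ of odd degree lying above $j_0$, and verify isolatedness directly: compute $[\Q(x):\Q]$ from the Galois image on the relevant torsion, check $\P^1$-isolatedness against the gonality of $X_1(N)$, and confirm $\mathrm{AV}$-isolatedness by verifying that the pertinent abelian-variety quotient has rank zero over $\Q$. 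These two verifications, being finite and explicit, complete the proof that $j(\mathcal{I}_{odd})\cap\Q$ contains both $-3^2\cdot 5^6/2^3$ and $3^3\cdot 13/2^2$.
\end{proof}
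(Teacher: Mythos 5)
Your overall architecture (split CM/non-CM, propagate isolatedness down to small level, finish with explicit curve-by-curve checks, and verify the two non-CM values directly) matches the paper's, and your converse sketch is essentially what the paper does: Najman's degree-$3$ point on $X_1(21)$, plus an explicit degree-$9$ point on $X_1(28)$ above $3^3\cdot 13/2^2$ checked to be $\mathbb{P}^1$-isolated via a Riemann--Roch space computation together with the rank-zero Jacobian of $X_1(28)$. But the central step of your forward direction --- bounding the level $N$ --- rests on a mechanism that does not work. You propose to bound $N$ by ``Abramovich-type lower bounds on the $\Q$-gonality,'' arguing that for large $N$ a point of odd degree must lie in an infinite family. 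Gonality lower bounds give no such conclusion: a point fails to be isolated when its degree is \emph{large} relative to the genus (so that its Riemann--Roch space has dimension $\geq 2$) or when it sits on a low-degree map, and neither condition is forced for a point of unknown degree on $X_1(N)$ with $N$ large. Since you have no a priori bound on $\deg(x)$, gonality cannot bound $N$, and this is precisely the gap that makes the theorem nontrivial.

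The paper's actual finiteness mechanism is arithmetic, not geometric. Lemma \ref{OddDegLem} shows that an odd-degree point with rational $j$-invariant forces, for every odd prime $p \mid N$, either a $\Q$-rational cyclic $p$-isogeny or the single exceptional $j$-invariant $3^3\cdot 5\cdot 7^5/2^7$; this uses the classification of mod-$p$ images of non-CM curves over $\Q$ (and Aoki's \cite[Cor.~9.4]{aoki95} in the CM case). The Mazur--Kenku classification of rational isogenies (Theorem \ref{IsogClassification}) then bounds the odd primes dividing $N$ and shows $N = 2^a p^b$ outside the $X_0(21)$ cases, with $a \leq 3$. The exponent $b$ is controlled not by gonality but by Greenberg's maximality results (Theorem \ref{isogenyTHM}): $\im\rho_{E,p^\infty}$ is the full preimage of its reduction mod $p$ (or $p^2$), which forces $\deg(x)=\deg(f)\deg(f(x))$ for the projection $f$ to low level, so Theorem \ref{LevelLowering} pushes the isolated point down to an explicit finite list of curves. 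The residual $2^a3^b$ case further requires the $3$-adic classification of \cite{3adicimages} and the determination of all rational points on an explicit genus-$4$ entanglement curve (Proposition \ref{54prop}) --- none of which is visible in your sketch. Your CM paragraph has a similar issue: \cite[Thm.~7.1]{BELOV} produces isolated CM points but says nothing about odd degree; the paper instead uses Aoki's restriction $N = 2^ap^b$ with CM by $\Q(\sqrt{-p})$ of class number one, and the Cartan-orbit analysis of \cite{BC1}, \cite{BC2} to descend to $X_1(p^m)$ before eliminating all but the three listed discriminants.
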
}

{The $j$-invariant $3^3\cdot 13/2^2$ corresponds to a degree 9 point on $X_1(28)$. The existence of such a point was noted during an extensive computational search performed by Najman and Gonz\'{a}lez-Jim\'{e}nez (see \cite{GJNalgorithm}), and in fact it can be realized by a rational elliptic curve under base extension. However, this is the first instance this point has been identified as isolated.} The {CM} $j$-invariants give points of degree 21 on $X_1(43)$, degree 33 on $X_1(67)$, and degree 81 on $X_1(163)$, respectively. They are in $j(\mathcal{I}_{odd}) \cap \Q$ if and only if these points are isolated. One difficulty in determining whether they are in fact isolated stems from the fact that the {Jacobian variety of each} of the last three curves has positive rank; see Remark \ref{finalCMjRmk}. 

The first step in the proof of Theorem \ref{MainThm} is to establish a connection between points on $X_1(N)$ of odd degree and rational isogenies. This is analogous to the connection found in the case of odd degree CM points on modular curves \cite[Cor. 9.4]{aoki95}, and it relies on the classification of rational isogenies of elliptic curves over $\Q$ due to Mazur \cite{mazur}, Kenku \cite{kenku}, and others.
\begin{thm}\label{OddDegThm}
Let $x\in X_1(n)$ be a point of odd degree with $j(x) \in \Q$. If $j(x) \neq 3^3\cdot 5\cdot 7^5/2^7$ and $p$ is an odd prime dividing $n$, then there exists $y \in X_0(p)(\Q)$ with $j(x)=j(y)$. Moreover:
\begin{enumerate}
\item If {$j(x) \neq j(z)$ for all $z \in X_0(21)(\Q)$}, then $n=2^ap^b$ for $p \in \{3,5,7,11,13, 19, 43, 67, 163\}$ and nonnegative integers $a,b$ with $a \leq 3$. If $b>0$, then $a \leq 2$
\item If {there exists $z \in X_0(21)(\Q)$ with $j(x)=j(z)$,} then $n=2^a3^b7^c$ for nonnegative integers $a,b,c$ with $a \leq 1$.
\end{enumerate} 
If $j(x)=3^3\cdot 5\cdot 7^5/2^7$, then $n=2^a7^b$ for nonnegative integers $a,b$ with $a \leq 1$.
\end{thm}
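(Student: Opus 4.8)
The plan is to reduce the statement to a question about Galois orbits on $\mathbb{P}^1(\F_p)$ and then invoke the classification of rational isogenies. Fix an odd prime $p \mid n$. Since the $j$-map factors as $X_1(n) \to X_0(n) \to X_0(p)$, the image of $x$ is a closed point $y_0 \in X_0(p)$ whose degree divides $\deg(x)$, hence is odd, and which satisfies $j(y_0)=j(x)\in\Q$. Choosing any $E_0/\Q$ with $j(E_0)=j(x)$, the $p+1$ geometric points of $X_0(p)$ above $j(x)$ are identified with the $p+1$ lines in $E_0[p]$, and $G_\Q$ permutes them through $G_\Q \xrightarrow{\overline{\rho}_{E_0,p}} \GL_2(\F_p) \to \mathrm{PGL}_2(\F_p)$ acting on $\mathbb{P}^1(\F_p)$. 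Because replacing $E_0$ by a twist multiplies $\overline{\rho}_{E_0,p}$ by a scalar-valued character, this action—and hence the multiset of degrees of points of $X_0(p)$ above $j(x)$—depends only on $j(x)$ (the cases $j(x)\in\{0,1728\}$ being CM and handled in the same way). Under this dictionary a closed point of odd degree corresponds to a $G_\Q$-orbit of odd size, while a rational point $y\in X_0(p)(\Q)$ with $j(y)=j(x)$ corresponds to a fixed point, i.e.\ a $G_\Q$-stable line, i.e.\ a rational $p$-isogeny. Thus the first assertion is equivalent to: if $G:=\im\overline{\rho}_{E_0,p}$ has an orbit of odd size on $\mathbb{P}^1(\F_p)$, then it has a fixed point, with $j(x)=3^3\cdot 5\cdot 7^5/2^7$ the only exception.

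This orbit-theoretic statement is the crux, and I expect it to be the main obstacle. The starting observation is a parity constraint: the orbit sizes sum to $p+1$, which is even, so the number of odd orbits is even; in particular an odd orbit of size $>1$ forces a second odd orbit. I would then run through the possible images $G$ via the maximal subgroups of $\GL_2(\F_p)$ containing them, following Serre. If $G$ stabilizes a line it has a fixed point and $E_0$ has a rational $p$-isogeny, as desired. If $G$ is irreducible, it lies in the normalizer of a Cartan or has exceptional projective image: a nonsplit Cartan acts transitively on $\mathbb{P}^1(\F_p)$ (the single orbit having even size $p+1$), and for the split Cartan and its normalizer a short analysis—using that $\det$ is surjective onto $\F_p^\times$—shows that every orbit of an irreducible such $G$ is even. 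The genuinely delicate case is that of exceptional image, where the projective image is $A_4$, $S_4$, or $A_5$ and orbit patterns such as $\{3,3,2\}$ on $\mathbb{P}^1(\F_7)$ can occur, yielding odd-degree points on $X_0(p)$ with no rational $p$-isogeny. To finish I would appeal to the finite, explicitly known classification of non-CM elliptic curves over $\Q$ with exceptional mod-$p$ image (e.g.\ \cite{ZywinaImages}, \cite{sutherland}) and check case by case which admit an odd orbit; this isolates $p=7$ and the single $j$-invariant $3^3\cdot 5\cdot 7^5/2^7$.

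With the isogeny statement in hand, the constraints on $n$ follow from the classification of rational isogenies due to Mazur \cite{mazur} and Kenku \cite{kenku}. Away from the exceptional $j$, each odd prime $p\mid n$ gives a rational $p$-isogeny, so $p$ lies in Mazur's list $\{3,5,7,11,13,17,19,37,43,67,163\}$. To remove $17$ and $37$ I would use parity once more: a rational $p$-isogeny yields a $G_\Q$-stable line $L\subseteq E_0[p]$ on which Galois acts by a character $\chi$, and the degree of the associated point of $X_1(p)$ is the order of $\chi$ in $\F_p^\times/\{\pm 1\}$; computing the possible isogeny characters for $p\in\{17,37\}$ from the known curves shows this order is always even, so no odd-degree point arises (consistent with the degree-$6$ points above $-7\cdot 11^3$ on $X_1(37)$). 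The admissible prime powers and the bounds $a\le 3$, $a\le 2$ when $b>0$, and the exceptional $a\le 1$, come from combining Kenku's determination of the $N$ with $X_0(N)(\Q)$ nonempty with the same order-of-character analysis applied at the $2$-power level.

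Finally, the dichotomy (i)/(ii) records when two distinct odd primes may simultaneously divide $n$. If $p\ne q$ are odd primes dividing $n$, then $E_0$ has both a rational $p$- and a rational $q$-isogeny, hence a rational $pq$-isogeny, so $X_0(pq)(\Q)$ has a non-cuspidal point; by Kenku the only odd possibilities are $pq\in\{15,21\}$, and the character-parity computation excludes $15$, leaving $pq=21$ and case (ii) with $n=2^a3^b7^c$, $a\le 1$. Otherwise at most one odd prime divides $n$, giving case (i). The exceptional $j=3^3\cdot 5\cdot 7^5/2^7$ is then treated by the same template applied to its explicitly known $2$-adic and $7$-adic images, yielding $n=2^a7^b$ with $a\le 1$.
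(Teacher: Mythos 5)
Your overall architecture (reduce to the mod-$p$ image, exploit parity of Galois orbits, then invoke Mazur--Kenku) is the same as the paper's, but the case analysis at the heart of your first step contains an error that lands exactly on the exceptional $j$-invariant. You assert that for $G$ irreducible inside the normalizer of a split Cartan, surjectivity of $\det$ forces every $G$-orbit on $\mathbb{P}^1(\F_p)$ to be even, and you locate the exception $3^3\cdot 5\cdot 7^5/2^7$ in the exceptional ($A_4$, $S_4$, $A_5$) case. Both halves are wrong. Take $p=7$, let the diagonal part of $G$ be $\{\operatorname{diag}(a,d): a/d\in S\}$ with $S=\{1,2,4\}$ the squares, and adjoin an antidiagonal element $\left[\begin{smallmatrix}0&b\\ c&0\end{smallmatrix}\right]$ with $b/c\in S$: the action on $\F_7^{\times}\subset\mathbb{P}^1(\F_7)$ sends the coset $uS$ to $(b/c)u^{-1}S$, so both cosets of $S$ are preserved, giving orbit pattern $\{2,3,3\}$ with no fixed point, while $\det$ is surjective because $-bc$ is a nonsquare ($-1$ is not a square mod $7$). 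This is precisely the situation of the groups 7Ns.2.1 and 7Ns.3.1, which by \cite{ZywinaImages} occur for a non-CM $E/\Q$ exactly when $j(E)=3^3\cdot 5\cdot 7^5/2^7$ --- that is where the exception comes from (see the paper's remark following the theorem), whereas the paper's check of all known mod-$p$ images for $E/\Q$ (via the tables in \cite{GJN}) shows the exceptional-subgroup case you flag as delicate never produces the exception. As written, your "short analysis" would prove a false statement and your argument would miss the exceptional $j$-invariant entirely; and even once corrected, pinning the exception to a single $j$-invariant requires the full classification of mod-$p$ images over $\Q$, which is the actual engine of the paper's Lemma 7.

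The second substantive gap is the power of $2$. The bounds $a\le 3$, $a\le 2$ when $b>0$, and $a\le 1$ in case (ii) cannot be extracted from Kenku's list together with an ``order-of-character analysis at the $2$-power level'': when $E$ has no rational $2$-isogeny there is no isogeny character to analyze, and when it does, a rational $2$-isogeny alone does not bound the degrees of $2^a$-torsion points. The paper needs genuinely more input: with a rational $2$-isogeny it uses the lifting result \cite[Prop. 4.6]{GJN} to force odd-degree points of order $2^a$ to be rational and then the nonexistence of rational $16$-torsion to get $a\le 3$; without one it uses the Rouse--Zureick-Brown $2$-adic classification (the curve X20) to get $a\le 2$; and the refinement $a\le 2$ for $b>0$ combines Kenku's incompatibility of $2$- and $p$-isogenies with explicit computations on $X_1(4)$ and $X_1(8)$. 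Two smaller points: your parity test for excluding $17$ and $37$ via the order of the isogeny character only controls the orbit of the stable line, not the other points of order $p$ above which your point on $X_1(n)$ might sit; and the CM $j$-invariants, which are responsible for $19, 43, 67, 163$ appearing in the list and for the sharper bounds on $a$ in that case, are not addressed at all --- the paper handles them by Aoki's theorem and \cite[Prop. 5.7]{BCS}.
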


\begin{rmk} From work of Zywina \cite{ZywinaImages}, it is known that the mod 7 Galois representation of a non-CM elliptic curve over $\Q$ is properly contained in the normalizer of a split Cartan subgroup if and only if $j(E)=3^3\cdot 5\cdot 7^5/2^7$.
\end{rmk}
\begin{rmk}
There are precisely 4 non-cuspidal points in $X_0(21)(\Q)$ which correspond to non-CM elliptic curves with $j$-invariants $-3^2\cdot 5^6/2^3$, $3^3\cdot5^3/2$, $-3^2\cdot 5^3 \cdot 101^3/2^{21}$, $-3^3\cdot 5^3 \cdot 383^3/2^7$. As mentioned above, the first of these is known to correspond to an isolated point.
\end{rmk}

Provided $j(x) \neq 3^3\cdot 5\cdot 7^5/2^7$ and does not correspond to a CM elliptic curve, we can deduce information about the degree of $x \in X_1(2^ap^b)$ using work of Greenberg \cite{greenberg2012} and Greenberg, Rubin, Silverberg, and Stoll \cite{GRSS14} which concerns the image of Galois representations of an elliptic curve over $\Q$ with a rational isogeny. Often, the degree of $x$ is as large as possible given the degree of its image in $X_1(2^ap)$ or $X_1(2^ap^2)$, which means an isolated point would remain isolated under the natural projection map \cite{BELOV}. We must then determine whether isolated points corresponding to elliptic curves with rational $j$-invariant exist on this curve of lower level. If $p=3$, we rely on the classification of 3-adic images of elliptic curves over $\Q$ due to Rouse, Sutherland, and Zureick-Brown \cite{3adicimages}. Our proof involves finding all rational points on an explicit genus 4 curve which characterizes a certain kind of ``entanglement" of torsion point fields; see Proposition \ref{54prop}. {Other notable cases include elliptic curves with rational cyclic isogenies of degree 21 or 25. In the first case, we show in $\S4.2$ that known instances of entanglement can be leveraged to produce bounds on the level of certain Galois representations which improve those given in \cite[Prop. 6.1]{BELOV}. For elliptic curves with a rational cyclic 25-isogeny, our arguments use intermediate modular curves lying between $X_1(N)$ and $X_0(N)$ in addition to refined results of Greenberg \cite{greenberg2012}; see Proposition \ref{25lem}. }

The results on CM elliptic curves follow from work of Kwon \cite{kwon99}, Aoki \cite{aoki95}, and recent work of the first author and Pete L. Clark \cite{BC1}, \cite{BC2}. 

\section*{Acknowledgements}
We thank Pete L. Clark for helpful conversations. We also thank Filip Najman, Bianca Viray, and David Zureick-Brown for helpful comments on an earlier draft. The first author was partially supported by an A. J. Sterge Faculty Fellowship.

\section{Background and Notation}

\subsection{Galois representations of elliptic curves.}
Let $k$ be a number field and let $E/k$ be an elliptic curve. Then for any fixed $N \in \Z^+$ the points of $E(\overline{k})$ with order dividing $N$, denoted $E[N]$, form a free $\Z/N\Z$-module of rank 2. By choosing a basis for $E[N]$, the action of the absolute Galois group of $k$, denoted $\Gal_k$, is recorded in the mod $N$ Galois representation associated to $E$
\[
\rho_{E,N}: \Gal_k \rightarrow \Aut(E[N]) \cong \GL_2(\Z/N\Z).
\]
Taking the inverse limit over all $N$, we obtain the adelic Galois representation associated to $E$, which gives the Galois action on all torsion points of $E$
\[
\rho_{E}: \Gal_k \rightarrow  \Aut(E(\overline{k})_{\tors}) \cong  \GL_2(\widehat{\Z}).
\]
For any positive integer $m$, we may compose $\rho_E$ with projection onto the restricted product
\[
\rho_{E,m^{\infty}}: \Gal_k \xrightarrow{\rho_E} \GL_2(\widehat{\Z}) \cong  \prod_{p \text{ prime}} \GL_2(\Z_{p}) \xrightarrow{\proj} \prod_{p \mid m} \GL_2(\Z_{p}),
\]
obtaining the $m$-adic representation associated to $E$. {More generally, if $m,n$ are relatively prime positive integers, we write $\rho_{E,m \cdot n^{\infty}}$ for $\rho_E$ composed with the natural projection
\[
\GL_2(\widehat{\Z}) \cong \prod_{p \text{ prime}}\GL_2(\Z_p) \rightarrow \GL_2(\Z/m\Z) \times \prod_{p \mid n} \GL_2(\Z_{p}).
\] Throughout we use $\pi$ to denote the natural reduction map.}

For a fixed non-CM elliptic curve $E/k$, Serre's Open Image Theorem \cite{serre72} states that $\im \rho_E$ is open in $\GL_2(\widehat{\Z})$. Thus there exists a positive integer $N$ such that $\im \rho_E = \pi^{-1}(\im \rho_{E,N})$. The smallest such $N$ is called the level of the adelic Galois representation. Similarly, the smallest positive integer $n$ such that $\im \rho_{E,m^{\infty}}= \pi^{-1}(\im \rho_{E,n})$ is called the level of the $m$-adic Galois representation associated to $E$. In fact, for any fixed integer $m$, there exists a bound on the level of $\rho_{E,m^{\infty}}$ that depends only on the degree of $k$. See \cite[Thm. 1.1]{CT13}, \cite[Thm 2.3]{CP18} in the case where $m$ is prime and \cite[Prop. 6.1]{BELOV} for the general case.

A consequence of Serre's Open Image Theorem is that, given a non-CM elliptic curve $E/k$, the mod $p$ Galois representation is surjective for all sufficiently large primes. In \cite{serre72}, Serre asked whether there might exist some uniform constant $C$ depending only on $k$ such that $\im \rho_{E,p}=\GL_2(\Z/p\Z)$ for all primes $p>C$ and \emph{all} non-CM elliptic curves $E/k$. In the case where $k=\Q$, both significant theoretical results and computational evidence make it appear likely that the answer is yes, and this is now often referred to as Serre's Uniformity Conjecture. It is even believed that $C$ can be taken to be $37$ in the case of non-CM elliptic curves over $\Q$. See, for example, \cite[Conj. 1.12]{ZywinaImages} and \cite[Conj. 1.1]{sutherland}.

If $\im \rho_{E,p}$ is not all of $\GL_2(\Z/p\Z)$, then it is contained in one of its known maximal subgroups. These include the Borel subgroup, the normalizer of a split or non-split Cartan subgroup, or an exceptional subgroup; see \cite[Section 2]{serre72} for details. For primes $p \leq 13$, the groups that arise as $\im \rho_{E,p}$ for a non-CM elliptic curve $E/\Q$ are known. The case of primes $p \leq 11$ was completed by Zywina \cite{ZywinaImages}; see also Sutherland \cite{sutherland}. At the time, the classification for $p=13$ was complete aside from ruling out the existence of non-CM elliptic curves $E/\Q$ with $\im \rho_{E,p}$ contained in the normalizer of a (split or non-split) Cartan subgroup. Baran \cite{Baran14} showed that such an elliptic curve would correspond to a rational point on an explicit genus 3 curve, and work of Balakrishnan, Dogra, M\"{u}ller, Tuitman, and Vonk \cite{Balakrishnan} showed that this genus 3 curve had no non-cuspidal, non-CM points. For a list of the groups that arise as $\im \rho_{E,p}$ for primes $p \leq 13$, as well as degrees of fields of definition for points of order $p$, see Tables 1 and 2 in \cite{GJN}. Throughout, we use the notation of Sutherland \cite{sutherland} to denote subgroups of $\GL_2(\Z/p\Z)$, which is also the notation used in LMFDB.

More generally, one could seek to classify which groups arise as $\im \rho_{E,p^{\infty}}$ for a non-CM elliptic curve $E/\Q$.\footnote{The case of a CM elliptic curve $E$ defined over $\Q(j(E))$ is addressed in recent work of Lozano-Robledo \cite{AlvaroCM}.} One of the first results in this direction was work of Rouse and Zureick-Brown \cite{RouseDZB} which gave the complete classification for $p=2$. The groups which arise infinitely often as $\im \rho_{E,p^{\infty}}$ were classified by Sutherland and Zywina \cite{SutherlandZywina}. For $p=3$, evidence suggests that the groups identified in \cite{SutherlandZywina} are in fact the only groups which arise; see forthcoming work of Rouse, Sutherland, and Zureick-Brown \cite{3adicimages}.

\subsection{Isogenies of elliptic curves}
Let $E/k$ be an elliptic curve, and let $P \in E(\overline{k})$ be a point of order $N$. If the subgroup generated by $P$ is fixed (as a group) by $\Gal_k$, then we say $E$ possesses a rational cyclic subgroup of order $N$. Alternatively, since such a subgroup is the kernel of a $k$-rational isogeny from $E$ to another elliptic curve defined over $k$, we may say $E$ has a $k$-rational cyclic $N$-isogeny. In the case of elliptic curves $E/\Q$, we have a complete determination of the rational cyclic subgroups that can occur.

\begin{thm}[Mazur \cite{mazur}, Kenku \cite{kenku}, and others; see Section 9 of \cite{LR}] \label{IsogClassification}
If $E/\Q$ is an elliptic curve possessing a $\Q$-rational cyclic subgroup of order $N$, then $N \leq 19$ or $N \in \{21, 25, 27, 37, 43, 67, 163\}$.
\end{thm}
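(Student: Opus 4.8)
The plan is to pass to modular curves and reduce to prime-power level. Possession of a $\Q$-rational cyclic subgroup of order $N$ by an elliptic curve $E/\Q$ is equivalent to the existence of a non-cuspidal point in $X_0(N)(\Q)$, so the theorem amounts to identifying the integers $N$ for which $X_0(N)$ has a non-cuspidal rational point. The first observation is multiplicativity: if $N = \prod_i p_i^{a_i}$ and $C \subseteq E$ is a $\Gal_\Q$-stable cyclic subgroup of order $N$, then its $p_i$-primary parts $C[p_i^{a_i}]$ are themselves $\Gal_\Q$-stable cyclic subgroups, so $E$ admits a rational cyclic $p_i^{a_i}$-isogeny for each $i$; conversely, the sum of $\Gal_\Q$-stable cyclic subgroups of coprime orders is cyclic of the product order. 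Hence it suffices to (i) find, for each prime $p$, the largest $a$ for which some $E/\Q$ has a rational cyclic $p^a$-isogeny, and then (ii) decide which coprime products of these prime-power isogenies are simultaneously realizable.

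The crux of step (i), the case $a=1$, is Mazur's theorem on prime-degree isogenies, and this is by a wide margin the main obstacle: it is the deepest input and is not circumventable by elementary means. I would follow the Eisenstein ideal method. The two cusps of $X_0(p)$ give rational points, and a hypothetical non-cuspidal rational point $x$ is studied through its image under an Abel--Jacobi map into $J_0(p)$. Passing to the Eisenstein quotient, obtained by killing the ideal generated by the elements $T_\ell - \ell - 1$ (together with $1+w_p$), one shows that this quotient of $J_0(p)$ has finite Mordell--Weil group over $\Q$, with order tied to the numerator of $(p-1)/12$. Controlling the reduction of $x$ at small primes then forces $x$ to coincide with a cusp for all $p$ outside a short list, yielding $p \in \{2,3,5,7,11,13,17,19,37,43,67,163\}$; for the larger primes the surviving non-cuspidal points are sporadic, several of them CM of class number one (e.g. $p = 43, 67, 163$).

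For the higher prime powers I would combine genus and rank information along the tower $X_0(p^a) \to X_0(p)$. The genus-zero curves, namely $X_0(N)$ for $N \le 10$ and $N \in \{12,13,16,18,25\}$, are isomorphic to $\mathbb{P}^1$ and so have infinitely many rational points; these already yield the maximal prime powers $2^4 = 16$ and $5^2 = 25$ and realize all $N$ up to $18$ in this range. To obtain the bound $3^3 = 27$ and to exclude cyclic $32$-, $49$-, $81$-, and $125$-isogenies (and all higher powers), one shows the relevant $X_0(N)$ has positive genus and determines its rational points: in genus one by exhibiting the curve as a rank-zero elliptic curve over $\Q$ and enumerating its finite Mordell--Weil group, and in genus at least two by Faltings together with an explicit computation identifying every rational point as cuspidal or CM.

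Step (ii) reduces to a finite check, since the prime-power bounds from (i) leave only finitely many candidate composite $N$. The composite levels with $X_0(N) \cong \mathbb{P}^1$ are immediate. The essential surviving composite is $N = 21$: here $X_0(21)$ has genus one and, being a rank-zero elliptic curve over $\Q$, has only finitely many rational points, exactly four of which are non-cuspidal. Every remaining coprime product---$20$, $22$, $24$, and so on---is excluded by the same dichotomy, producing a rank-zero elliptic quotient in genus one or invoking Faltings with explicit point determination in genus at least two to show no non-cuspidal rational point exists; this is the portion due to Kenku and others. Assembling the maximal prime powers from (i) with the surviving composites from (ii) gives precisely $N \le 19$ together with $N \in \{21,25,27,37,43,67,163\}$.
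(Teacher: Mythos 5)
This theorem is not proved in the paper at all: it is quoted as an attributed result of Mazur, Kenku, and others, with Section 9 of \cite{LR} cited as a survey. So there is no internal proof to compare against; what can be assessed is whether your outline is a faithful roadmap of the literature proof. In broad architecture it is. The translation to non-cuspidal points of $X_0(N)(\Q)$, the reduction to prime-power levels via the $p$-primary decomposition of a $\Gal_{\Q}$-stable cyclic subgroup, the identification of Mazur's prime-degree theorem (via the Eisenstein quotient of $J_0(p)$ and its finite Mordell--Weil group of order tied to the numerator of $(p-1)/12$) as the irreducible core, and the subsequent case analysis on prime powers and coprime composites due to Kenku and others --- this is exactly how the classification is organized in the sources the paper cites.

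Two points in your sketch deserve flagging. First, the list of prime-power levels to be excluded is incomplete: besides $32$, $49$, $81$, $125$ you must also rule out $121$ and $169$ (both settled by Kenku; $X_0(169)$ in particular required a separate argument), as well as $p^2$ for $p \in \{17,19,37,43,67,163\}$ --- the latter are handled not by studying $X_0(p^2)$ directly but by inspecting the finitely many explicit $j$-invariants admitting a rational $p$-isogeny and checking none lifts. Second, ``Faltings together with an explicit computation'' is not a method: Faltings gives finiteness but no way to enumerate the rational points, and the genus $\geq 2$ cases were actually settled (largely before 1983) by mapping to rank-zero elliptic quotients, by Mazur-style descents on Eisenstein quotients, or by exploiting already-determined curves lower in the tower. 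As a proof these passages are placeholders rather than arguments; as a citation map of where the real work lives, the proposal is accurate, and that is essentially the role the theorem plays in this paper.
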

Let $p \geq 5$ be prime, and let $E/\Q$ be a non-CM elliptic curve with a rational cyclic $p$-isogeny. Work of Greenberg, Rubin, Silverberg, and Stoll  \cite{greenberg2012}, \cite{GRSS14} shows that $\im \rho_{E,p^{\infty}}$ is as large as possible given the isogenies over $\Q$ with degree a power of $p$. In particular, their work implies the following theorem which plays a crucial role in the proof of our main results.

        \begin{thm}[Greenberg \cite{greenberg2012}, Greenberg, Rubin, Silverberg, Stoll \cite{GRSS14}] \label{isogenyTHM}
        Let $E/\Q$ be a non-CM elliptic curve with a $\Q$-rational cyclic isogeny of prime degree $p$.
        \begin{enumerate}
        \item If $p=7, 11,$ or $13$, then for any choice of basis the image of $\rho_{E,p^{\infty}}$ contains $I_2+pM_2(\Z_p)$. 
        \item Suppose $p=5$. If $E/\Q$ does not have a rational cyclic 25-isogeny, then for any choice of basis the image of $\rho_{E,5^{\infty}}$ contains $I_2+5M_2(\Z_5)$. Otherwise, the image of $\rho_{E,5^{\infty}}$ contains $I_2+25M_2(\Z_5)$.
        \end{enumerate}
                \end{thm}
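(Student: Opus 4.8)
The plan is to deduce the $p$-adic statement from mod $p^2$ information by means of $p$-adic Lie theory. Write $G=\im\rho_{E,p^\infty}\subseteq\GL_2(\Z_p)$ and let $\Phi=I_2+pM_2(\Z_p)$ be the kernel of reduction mod $p$. Since $E$ is non-CM, Serre's Open Image Theorem makes $G$ open, so $H:=G\cap\Phi$ is open in $\Phi$. For $p\geq 5$ (and all our primes $5,7,11,13$ satisfy this) the group $\Phi$ is uniformly powerful, and the $p$-adic logarithm identifies its closed subgroups with closed $\Z_p$-Lie subalgebras of $pM_2(\Z_p)$; set $\mathfrak{h}=\log H\subseteq pM_2(\Z_p)$, a lattice of full rank because $H$ is open. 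As $\Phi\trianglelefteq\GL_2(\Z_p)$, the subgroup $H$ is normal in $G$, so $\mathfrak{h}$ is stable under the adjoint action of $\bar G:=\rho_{E,p}(\Gal_\Q)$. The desired conclusion $H=\Phi$ is equivalent to $\mathfrak{h}=pM_2(\Z_p)$, and by Nakayama's lemma this reduces to showing that the reduction $\bar{\mathfrak{h}}\subseteq pM_2(\Z_p)/p^2M_2(\Z_p)\cong\mathfrak{gl}_2(\F_p)$ is all of $\mathfrak{gl}_2(\F_p)$. Thus everything reduces to a statement about the mod $p^2$ image.

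Next I would classify the $\operatorname{Ad}(\bar G)$-stable Lie subalgebras of $\mathfrak{gl}_2(\F_p)$ that $\bar{\mathfrak{h}}$ could be. The rational $p$-isogeny forces $\bar G$ into a Borel subgroup $B$, say upper triangular, so in the basis $E_{11},E_{22},E_{12},E_{21}$ the relevant $B$-stable subspaces are the line $\F_pE_{12}$ and the Borel subalgebra $\mathfrak{b}=\langle E_{11},E_{22},E_{12}\rangle$. Two inputs control $\bar{\mathfrak{h}}$: the Weil pairing makes $\det\circ\rho_{E,p^\infty}$ the full cyclotomic character, so $H$ surjects onto $\det(\Phi)=1+p\Z_p$ and $\operatorname{tr}$ is surjective on $\bar{\mathfrak{h}}$, pinning down the diagonal and scalar directions; and a direct computation with a regular torus element of $\bar G$ together with the bracket $[E_{12},E_{21}]=E_{11}-E_{22}$ shows that once $\bar{\mathfrak{h}}$ meets both off-diagonal lines it must equal $\mathfrak{gl}_2(\F_p)$. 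Consequently every proper $\operatorname{Ad}(\bar G)$-stable subalgebra compatible with the trace constraint is contained in a Borel subalgebra, i.e.\ in the upper or lower triangular part — exactly the condition that $\bar G$ and $H$ jointly preserve a flag to second order.

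The heart of the matter, and the main obstacle, is to translate the failure $\bar{\mathfrak{h}}\subseteq\mathfrak{b}$ into arithmetic: one shows that if $\bar{\mathfrak{h}}$ lies in a Borel subalgebra then the image of $\rho_{E,p^2}$ stabilizes a cyclic subgroup of order $p^2$ lifting the given rational $p$-subgroup, so that $E$ admits a rational cyclic $p^2$-isogeny. This non-degeneracy statement is the technical core of the work of Greenberg and of Greenberg--Rubin--Silverberg--Stoll, and the delicate part is the local analysis at $p$ of the isogeny characters $\lambda,\mu$ and the ramification of the extension class cutting out $E_{12}$. Granting it, Theorem \ref{IsogClassification} finishes the argument: a rational cyclic isogeny of degree $p^2\in\{49,121,169\}$ is impossible, so for $p\in\{7,11,13\}$ we must have $\bar{\mathfrak{h}}=\mathfrak{gl}_2(\F_p)$ and hence $H=\Phi$. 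For $p=5$ the degree $25$ is allowed, producing the stated dichotomy: without a $25$-isogeny the same argument gives $I_2+5M_2(\Z_5)\subseteq G$, while with one we have $\bar{\mathfrak{h}}\subseteq\mathfrak{b}$ and the depth drops by one; running the identical analysis one level deeper with $\Phi'=I_2+25M_2(\Z_5)$ (whose obstruction would be a rational cyclic $125$-isogeny, excluded by Theorem \ref{IsogClassification}) then yields $I_2+25M_2(\Z_5)\subseteq G$. A final bookkeeping point is to handle the degenerate cases where the mod $p$ extension splits or the two isogeny characters coincide, so that $\bar G$ contains no regular torus element; there the two stable lines give a second isogeny and one checks this forces either CM or one of the excluded $j$-invariants.
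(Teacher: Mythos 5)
Your proposal does not actually prove the theorem: at the decisive moment you write ``Granting it,'' where ``it'' is precisely the arithmetic content of Greenberg's and Greenberg--Rubin--Silverberg--Stoll's theorems (the analysis of the isogeny characters and of the extension class, carried out via ramification and class field theory). The Lie-theoretic frame you build around it --- reduction to the mod $p^2$ image via the logarithm on $I_2+pM_2(\Z_p)$ and Nakayama, then a classification of $\operatorname{Ad}$-stable subalgebras of $\mathfrak{gl}_2(\F_p)$ --- is a reasonable packaging of the soft part, but it is not where the theorem lives. The paper makes no pretense of reproving this core: it cites Theorem 1 and Proposition 5.1.1 of Greenberg for $p=11,13$ and the $25$-isogeny case, and Theorem 5.5 of Greenberg--Rubin--Silverberg--Stoll for $p=7$, and its only original content is the remaining $p=5$ case discussed below. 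Note also that your intermediate claim that the sole obstruction to $\overline{\mathfrak{h}}=\mathfrak{gl}_2(\F_p)$ is a rational cyclic $p^2$-isogeny is not what the cited results establish, and your assertion that $\det(H)=1+p\Z_p$ (hence that the trace is surjective on $\overline{\mathfrak{h}}$) is unjustified: surjectivity of the cyclotomic character gives $\det(G)=\Z_p^{\times}$, but $H=G\cap(I_2+pM_2(\Z_p))$ could a priori have smaller determinant image.

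The concrete error is in your ``final bookkeeping point.'' For $p=5$ the degenerate case in which the two isogeny characters give two independent $5$-isogenies does \emph{not} force CM or an excluded $j$-invariant: the curve parametrizing elliptic curves over $\Q$ with mod $5$ image in a split Cartan subgroup has genus $0$ and infinitely many non-CM rational points, so this case occurs for infinitely many $j$-invariants and cannot be dismissed. It is exactly here that the paper must do work of its own: when $E$ has two independent $5$-isogenies but no $25$-isogeny, it lets $5^r$ be the level of $\rho_{E,5^{\infty}}$, observes that the kernel $K$ of reduction mod $5$ inside the mod $5^r$ image is a Sylow $5$-subgroup (because the mod $5$ image has order prime to $5$), invokes Greenberg's Theorem 2 to see that $[\GL_2(\Z/5^r\Z):K]$ is divisible by $5$ but not $25$, and concludes by a counting argument that $K$ equals the full congruence subgroup $\{g\equiv I \pmod 5\}$. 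Your proposal contains no substitute for this step, and as written would reach the wrong conclusion (placing such curves in the ``$25$-isogeny'' branch or discarding them), so the $p=5$ statement without a $25$-isogeny is not established.
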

                \begin{proof}
                Let $E/\Q$ be a non-CM elliptic curve with a rational cyclic $p$-isogeny. Note that any Sylow pro-$p$ subgroup of $\GL_2(\Z_p)$ contains $I_2+pM_2(\Z_p)$. Thus if $p=11$ or 13, the theorem statement can be deduced from Theorem 1 in \cite{greenberg2012} and the discussion which follows \cite[p.1186-1187]{greenberg2012}. For $p=7$, this is given by Theorem 5.5 in \cite{GRSS14}. So suppose $p=5$. If none of the elliptic curves in the $\Q$-isogeny class of $E$ has 2 independent isogenies of degree 5, then the statement follows from Theorem 2 of \cite{greenberg2012}. So suppose there exists an elliptic curve $\Q$-isogenous to $E$ with 2 independent isogenies of degree 5. Then either $E$ has a rational cyclic 25-isogeny, and the claim follows from Proposition 5.1.1 of \cite{greenberg2012}, or else $E$ has 2 independent isogenies of degree 5. Suppose the latter holds. Then $\rho_{E,5^{\infty}} : \Gal_{\Q} \to \GL_{2}(\Z_{5})$ has level $5^{r}$ which we identify with a subgroup $G$ of $\GL_{2}(\Z/5^{r} \Z)$. Let $K \subseteq G$ be the kernel of reduction map modulo $5$. Then, $[G : K]$ has order coprime to $5$ because $E$ has two independent $5$-isogenies (see Table 1 in \cite{GJN}). It follows that $K$ is a Sylow $5$-subgroup of $G$ and Theorem 2 of \cite{greenberg2012} gives that the index of $K$ in $\GL_{2}(\Z/5^{r} \Z)$ is divisible by $5$ but not $25$. If we let $L = \{ g \in \GL_{2}(\Z/5^{r} \Z) : g \equiv I \pmod{5} \}$, then $K \subseteq L$ and $|K| = |L|$. Thus, $K = L$ and the image of $\rho_{E,5^{\infty}}$ contains all matrices congruent to the identity modulo $5$.
                \end{proof}

\begin{rmk}
This shows that if $E/\Q$ is a non-CM elliptic curve with a rational cyclic $p$-isogeny for some prime $p \geq 5$, then $\im \rho_{E,p^{\infty}}$ is the complete pre-image of $\im \rho_{E,p^m}$ in $\GL_2(\Z_p)$, where $m$ is the maximum integer such that $E$ possesses a $\Q$-rational cyclic $p^m$-isogeny. This does not hold if $p=3$. For example, by Sutherland and Zywina \cite{SutherlandZywina} there exist non-CM elliptic curves $E/\Q$ such that the associated 3-adic Galois representation has level 27. However, no non-CM elliptic curves over $\Q$ have a rational cyclic 27-isogeny (see, for example, Table 4 in \cite{LR}). \end{rmk}

\subsection{Modular curves.} Here we briefly recall the constructions of the modular curves $X_1(N)$ and $X_0(N)$, along with some useful formulas regarding maps between modular curves. For more details, see, for example, \cite[$\S7.7$]{modular}, \cite{DiamondIm}, \cite[$\S6.7$]{shimura}, \cite{DR}.

For any $N \in \Z^+$, the curve $Y_1(N)$ parametrizes $\mathbb{C}$-isomorphism classes of elliptic curves with a distinguished point of order $N$. An explicit construction is given by
\[
Y_1(N) \coloneqq \mathbb{H}/\Gamma_1(N),
\]
where $\mathbb{H}$ denotes the upper half plane and 
\[
\Gamma_1(N) \coloneqq \left\{\begin{bmatrix} a & b \\ c & d \end{bmatrix} \in \text{SL}_2(\Z): c \equiv 0 \pmod{N} \text{ and } a \equiv d \equiv 1 \pmod{N} \right\}
\] acts on $\mathbb{H}$ via linear fractional transformations. The resulting Riemann surface is not compact. By adding in a finite number of points---the cusps---we obtain its compactification $X_1(N)$. This can be identified with a smooth projective curve defined over $\Q$.

            \begin{prop}\label{prop:Degree}
                For positive integers $a$ and $b$, there is a natural $\Q$-rational map $f:X_1(ab) \rightarrow X_1(a)$ defined by sending $[E,P]$ to $[E,bP]$. Moreover
                \[
                    \deg(f)=
                    c_{f}\cdot b^2 \prod_{p \mid b,\, p \nmid a}
                    \left(1-\frac{1}{p^2}\right),
                \]
                where $c_{f}=1/2$ if $a \leq 2$ and $ab>2$ and $c_{f}=1$ otherwise. 
            \end{prop}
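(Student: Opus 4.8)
The plan is to reduce the degree computation to an index of congruence subgroups in $\text{PSL}_2(\Z)$ and to isolate the constant $c_f$ as the bookkeeping of whether $-I$ lies in the groups $\Gamma_1(a)$ and $\Gamma_1(ab)$. First I would recall that $X_1(N)$ is the compactification of $\mathbb{H}/\Gamma_1(N)$ and that, since $\Gamma_1(ab) \subseteq \Gamma_1(a)$, the induced map on compactified curves is exactly the forgetful map $[E,P] \mapsto [E,bP]$; its $\Q$-rationality follows from the moduli interpretation of $X_1(N)$ over $\Q$ together with the Galois-equivariance of multiplication-by-$b$ on a point of order $ab$. The key general fact I would invoke is that the degree of the natural map $X_\Gamma \to X_{\Gamma'}$ attached to an inclusion $\Gamma \subseteq \Gamma'$ of congruence subgroups equals the index $[\bar\Gamma' : \bar\Gamma]$ of their images in $\text{PSL}_2(\Z)=\text{SL}_2(\Z)/\{\pm I\}$, since both groups act on $\mathbb{H}$ only through these images. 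This gives $\deg(f)=[\bar\Gamma_1(a):\bar\Gamma_1(ab)]$.

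Next I would compute indices in $\text{SL}_2(\Z)$ and then descend to $\text{PSL}_2(\Z)$. Because $\Gamma_1(N)$ is the preimage under reduction modulo $N$ of the order-$N$ group of upper unipotent matrices $\left( \begin{smallmatrix} 1 & * \\ 0 & 1 \end{smallmatrix} \right)$ in $\text{SL}_2(\Z/N\Z)$, and $|\text{SL}_2(\Z/N\Z)| = N^3\prod_{p\mid N}(1-1/p^2)$, one obtains $[\text{SL}_2(\Z):\Gamma_1(N)]=N^2\prod_{p\mid N}(1-1/p^2)$ for every $N\geq 1$. To pass to $\text{PSL}_2(\Z)$ I would use that $-I\in\Gamma_1(N)$ if and only if $N\in\{1,2\}$: when $-I\in\Gamma_1(N)$ the projection to $\text{PSL}_2(\Z)$ preserves the index, and when $-I\notin\Gamma_1(N)$ it halves it.

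Finally I would assemble the ratio $\deg(f)=[\text{PSL}_2(\Z):\bar\Gamma_1(ab)]/[\text{PSL}_2(\Z):\bar\Gamma_1(a)]$ in cases. In every case the factor $(ab)^2/a^2=b^2$ appears, together with $\prod_{p\mid ab}(1-1/p^2)/\prod_{p\mid a}(1-1/p^2)=\prod_{p\mid b,\,p\nmid a}(1-1/p^2)$; the only remaining discrepancy is a factor of $2$, which survives precisely when $-I\in\Gamma_1(a)$ but $-I\notin\Gamma_1(ab)$, that is, when $a\leq 2<ab$. This reproduces $c_f=1/2$ on exactly that range and $c_f=1$ otherwise. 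The main obstacle is really just this $\pm I$ bookkeeping: the degree formula is insensitive to the $2$-torsion subtlety everywhere except on the boundary $a\leq 2$, and pinning down the exact threshold $ab>2$ requires tracking $-I$ in both $\Gamma_1(a)$ and $\Gamma_1(ab)$ at once rather than in either group alone.
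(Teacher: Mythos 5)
Your proof is correct and follows essentially the same route as the paper, which simply cites the moduli interpretation for $\Q$-rationality and defers the degree computation to \cite[p.66]{modular}; your argument is precisely the index calculation $[\bar\Gamma_1(a):\bar\Gamma_1(ab)]$ in $\mathrm{PSL}_2(\Z)$ that the cited reference supplies, with the $\pm I$ bookkeeping correctly yielding $c_f=1/2$ exactly when $a\leq 2<ab$.
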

\begin{proof}
The fact that the map is $\Q$-rational follows from the moduli interpretation. The degree calculation can be deduced from \cite[p.66]{modular}.
\end{proof}

Similarly, the curve $Y_0(N)$ parametrizes $\mathbb{C}$-isomorphism classes of elliptic curves with a cyclic subgroup of order $N$. Specifically, 
\[
Y_0(N) \coloneqq \mathbb{H}/\Gamma_0(N),
\]
where
\[
\Gamma_0(N) \coloneqq \left\{\begin{bmatrix} a & b \\ c & d \end{bmatrix} \in \text{SL}_2(\Z): c \equiv 0 \pmod{N} \right\}.
\] The compactification $X_0(N)$ can be identified with an smooth projective curve over $\Q$.

\begin{prop}
For a positive integer $N$, there is a natural $\Q$-rational map $f:X_1(N) \rightarrow X_0(N)$ defined by sending $[E,P]$ to $[E,\langle P \rangle]$. If $N\leq 2$, then $\deg(f)=1$. Otherwise $\deg(f)=\varphi(N)/2$, where $\varphi$ denotes the Euler phi function.
\end{prop}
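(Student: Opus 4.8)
The plan is to dispatch the $\Q$-rationality exactly as in Proposition \ref{prop:Degree}: the assignment $[E,P]\mapsto[E,\langle P\rangle]$ is induced by a morphism of moduli problems that commutes with the $\Gal_\Q$-action, so $f$ descends to a $\Q$-rational map. The substance is the degree formula, and since degree is invariant under extension of the base field, it suffices to compute it over $\mathbb{C}$. For this I would use the complex-analytic uniformizations $Y_1(N)=\mathbb{H}/\Gamma_1(N)$ and $Y_0(N)=\mathbb{H}/\Gamma_0(N)$ (the degree being unchanged on passing to the compactifications $X_1(N),X_0(N)$). Because $-I$ acts trivially on $\mathbb{H}$, these are quotients by the images $\overline{\Gamma_1(N)}$ and $\overline{\Gamma_0(N)}$ of $\Gamma_1(N)\subseteq\Gamma_0(N)$ in $\mathrm{PSL}_2(\Z)$, and the natural covering between them has degree $[\overline{\Gamma_0(N)}:\overline{\Gamma_1(N)}]$.

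First I would compute the index upstairs in $\mathrm{SL}_2(\Z)$. The map $\begin{bmatrix} a & b \\ c & d\end{bmatrix}\mapsto d\bmod N$ is a well-defined group homomorphism $\Gamma_0(N)\to(\Z/N\Z)^\times$: for $\gamma\in\Gamma_0(N)$ one has $c\equiv 0$ and $ad\equiv 1\pmod N$, so $d$ is a unit, and since the lower-left entry of a product remains $\equiv 0 \pmod N$, the lower-right entry of a product reduces to the product of the lower-right entries. Its kernel is precisely $\Gamma_1(N)$, and a routine explicit lifting of units shows surjectivity, giving $[\Gamma_0(N):\Gamma_1(N)]=\varphi(N)$.

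The main point---and the only genuine subtlety---is the passage from $\mathrm{SL}_2(\Z)$ to $\mathrm{PSL}_2(\Z)$, which is where both the factor of two and the exceptional small cases originate. Since $-I\in\Gamma_0(N)$ for every $N$, we have $\overline{\Gamma_0(N)}=\Gamma_0(N)/\{\pm I\}$, whereas $-I\in\Gamma_1(N)$ if and only if $-1\equiv 1\pmod N$, i.e. $N\leq 2$. For $N\geq 3$ we thus have $\{\pm I\}\cap\Gamma_1(N)=\{I\}$, so $[\Gamma_1(N)\{\pm I\}:\Gamma_1(N)]=2$ and, by the correspondence theorem,
\[
[\overline{\Gamma_0(N)}:\overline{\Gamma_1(N)}]=[\Gamma_0(N):\Gamma_1(N)\{\pm I\}]=\frac{[\Gamma_0(N):\Gamma_1(N)]}{2}=\frac{\varphi(N)}{2}.
\]
For $N\in\{1,2\}$ we instead have $\Gamma_1(N)\{\pm I\}=\Gamma_1(N)$, so the index equals $\varphi(N)=1$ and $\deg(f)=1$, as claimed. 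As a sanity check, this matches the moduli-theoretic picture: over a generic $[E,C]\in X_0(N)$ with $\Aut(E)=\{\pm 1\}$, the fiber of $f$ consists of the $\varphi(N)$ generators of the cyclic group $C$, identified in pairs $P\sim -P$ by the automorphism $[-1]$, and these pairs are genuinely distinct exactly when $N\geq 3$.
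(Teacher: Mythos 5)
Your argument is correct. The paper itself gives essentially no argument here: it notes that $\Q$-rationality follows from the moduli interpretation and then simply cites the degree formula from \cite[p.66]{modular} (Diamond--Shurman), exactly as it did for the preceding proposition. What you have done is supply the standard computation underlying that citation: the reduction to the index $[\overline{\Gamma_0(N)}:\overline{\Gamma_1(N)}]$ in $\mathrm{PSL}_2(\Z)$, the homomorphism $\Gamma_0(N)\to(\Z/N\Z)^\times$, $\gamma\mapsto d \bmod N$, with kernel $\Gamma_1(N)$ (note that $d\equiv 1$ together with $ad\equiv 1 \pmod N$ forces $a\equiv 1$, so the kernel really is all of $\Gamma_1(N)$), and the careful bookkeeping of $-I$, which is where the factor $1/2$ and the exceptional cases $N\le 2$ come from. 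All of these steps check out, including the observation that the lower-right entry of a product in $\Gamma_0(N)$ is $cb'+dd'\equiv dd'\pmod N$ because $c\equiv 0$, and the use of the correspondence theorem to identify $[\overline{\Gamma_0(N)}:\overline{\Gamma_1(N)}]$ with $[\Gamma_0(N):\{\pm I\}\Gamma_1(N)]$. Your closing moduli-theoretic sanity check (generators of $C$ modulo $P\sim -P$) is also the right heuristic. So the two approaches are the same in substance; yours is self-contained where the paper defers to a reference, at the cost of a page of standard group theory that the paper deliberately avoids reproducing.
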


\begin{proof}
As with the previous proposition, the fact that the map is $\Q$-rational follows from the moduli interpretation, and the degree calculation can be deduced from \cite[p.66]{modular}.
\end{proof}

\subsection{Closed points on curves.} \label{ClosedPointsBackground} Let $C$ be a curve defined over a number field $k$. Throughout, we consider closed points $x\in C$, which are $\Gal_k$-orbits of points in $C(\overline{k})$. By the degree of $x$ we mean the degree of the associated residue field $k(x)$ over $k$. Alternatively, the degree of $x$ is equal to the cardinality of the $\Gal_k$-orbit of points in $C(\overline{k})$ associated to $x$. In the case where $C=X_1(N)$, the following lemma gives a way to compute the degree of a closed point $x\in X_1(N)$ associated to a non-CM elliptic curve $E$ and point $P$ of order $N$. Here, $k(P)$ denotes the field extension of $k$ generated by the coordinates of $P$.

   \begin{lem}\label{save2}\cite[Lemma 2.1]{BELOV}
            Let $E$ be a non-CM elliptic curve defined over the number field $k=\Q(j(E))$, let $P\in E$ be a point of order $N$, and let $x = [E,P]\in X_1(N)$. Then
            \[
                \deg(x)=c_x[k(P):\Q],
            \]
            where $c_x=1/2$ if {$2P \neq O$ and} there exists $\sigma \in \Gal_k$ such that {$\sigma(P)=-P$} and $c_x=1$ otherwise.
        \end{lem}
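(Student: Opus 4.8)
The plan is to compute the size of the $\Gal_k$-orbit of the $\overline{k}$-point underlying $x = [E,P]$ directly, using the description of closed points as Galois orbits recalled in $\S\ref{ClosedPointsBackground}$, and to relate that orbit size to $[k(P):\Q]$. First I would set $k = \Q(j(E))$ and recall that, since $x$ is a closed point of $X_1(N)$, its degree equals the cardinality of its $\Gal_k$-orbit. A point of $Y_1(N)$ records an isomorphism class of pairs $(E',P')$ with $P'$ of exact order $N$, and two such pairs define the same point precisely when there is an isomorphism of elliptic curves carrying one marked point to the other. The key structural input is that for a non-CM elliptic curve the only automorphisms are $\pm 1$, so $(E,P)$ and $(E,P')$ define the same point of $Y_1(N)$ if and only if $P' = P$ or $P' = -P$. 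Thus the action of $\sigma \in \Gal_k$ on the point $[E,P]$ sends it to $[E,\sigma(P)]$ (here $E$ is already defined over $k$), and this fixes $[E,P]$ exactly when $\sigma(P) \in \{P,-P\}$.

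With that identification in hand, I would count as follows. The stabilizer in $\Gal_k$ of the point $[E,P]$ is the subgroup $H = \{\sigma : \sigma(P) = \pm P\}$, and the degree of $x$ is $[\Gal_k : H]$. On the other hand, the pointwise stabilizer of $P$ is $H_0 = \{\sigma : \sigma(P) = P\} = \Gal_{k(P)}$, so that $[\Gal_k : H_0] = [k(P):k] = [k(P):\Q]$, the last equality because $k = \Q(j(E)) \subseteq k(P)$ as $j(E) \in \Q$. The whole question is then the index $[H:H_0] \in \{1,2\}$. I would split into cases: if $2P = O$ then $-P = P$, so $H = H_0$, giving $c_x = 1$; and if $2P \neq O$, then $[H:H_0] = 2$ precisely when there exists $\sigma \in \Gal_k$ with $\sigma(P) = -P$, and $[H:H_0]=1$ otherwise. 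Combining, $\deg(x) = [\Gal_k:H] = [\Gal_k:H_0]/[H:H_0] = c_x\,[k(P):\Q]$ with $c_x$ exactly as stated.

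The main obstacle — and the only place where the non-CM hypothesis is genuinely used — is justifying that the stabilizer of the point $[E,P]$ under Galois is governed solely by the relation $\sigma(P) = \pm P$, with no extra identifications coming from isomorphisms of $E$ to a twist or from a larger automorphism group. Concretely, one must rule out the possibility that $[E,\sigma(P)] = [E,P]$ via some isomorphism $E \to E$ other than $\pm 1$; this is where $\Aut(E) = \{\pm 1\}$ for non-CM $E$ is essential, and it is also what forces the residue field $k(x)$ to sit between $k$ and $k(P)$ with index at most $2$. I would also take care that $k = \Q(j(E))$ is the correct base field so that $E$ itself (up to quadratic twist, which does not affect the Galois orbit structure of the marked point up to sign) is defined over $k$, making the moduli-theoretic and the coordinate-theoretic counts agree. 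Since this is precisely the content of \cite[Lemma 2.1]{BELOV}, I would cite that argument for the finer points and present the index computation above as the streamlined proof.
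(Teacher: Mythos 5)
The paper does not prove this lemma---it is quoted verbatim from \cite[Lemma 2.1]{BELOV}---and your orbit-counting argument is essentially the standard proof of that result: identify $\deg(x)$ with the index of the stabilizer $H=\{\sigma: \sigma(P)=\pm P\}$, use $\Aut(E)=\{\pm 1\}$ (the non-CM hypothesis) to see that no other identifications of moduli points can occur, and read off $[H:H_0]\in\{1,2\}$ according to the two cases. One small wrinkle: your chain $[\Gal_k:H_0]=[k(P):k]=[k(P):\Q]$ silently assumes $j(E)\in\Q$, i.e.\ $k=\Q$, which is not a hypothesis of the lemma (though it holds in every application in this paper). For general $k=\Q(j(E))$ the degree of $x$ is the size of the $\Gal_{\Q}$-orbit, not the $\Gal_k$-orbit, and the residue field is $\Q(j(E),\mathfrak{h}(P))$; multiplying your identity $[\Q(x):k]=c_x[k(P):k]$ through by $[k:\Q]$ recovers the stated formula $\deg(x)=c_x[k(P):\Q]$, so the conclusion is unaffected, but the intermediate equality as written is only valid when $k=\Q$.
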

        
    More generally, it is often useful to construct the residue field of a closed point on $X_1(N)$ using Weber functions. For an elliptic curve $E$, let $\mathfrak{h}: E \rightarrow E/\Aut(E) \cong \mathbb{P}^1$  be a Weber function. If $E: y^2=x^3+Ax+B$ and $P=(x,y) \in E$, then $\mathfrak{h}$ can be taken to be
\[ \mathfrak{h}(P) = \begin{cases} x & AB \neq 0 \\ x^2 & B = 0 \\ x^3 & A = 0 \end{cases}. \]
We have $B = 0$ iff $j(E) = 1728$ and $A = 0$ iff $j(E) = 0$. Then for $x = [E,P]\in X_1(N)$, the residue field $\Q(x)$ is
        \[
       \Q(j(E), \mathfrak{h}(P)). 
        \]It follows from \cite[Proposition VI.3.2]{DR} that there is a model of $E$ over $\Q(x)$ such that $P \in E(\Q(x))$.
        \subsection{Isolated points.}
Let $C/k$ be a curve, and suppose $P_0 \in C(k)$.\footnote{For the case where $C$ does not have a $k$-rational point, see $\S 4$ of \cite{BELOV}.} For any positive integer $d$, we let $C^{(d)}$ denote the $d$th symmetric product of $C$, a variety whose points correspond to effective divisors of degree $d$ on $C$. Any closed point $x \in C$ of degree $d$ gives a $k$-rational point of $C^{(d)}$, and we have a natural map to the Jacobian variety
\[
\Phi: C^{(d)} \rightarrow \text{Jac}(C)
\]
defined by sending $x=P_1 + \cdots +P_d$ to the divisor class $[P_1 + \cdots +P_d -dP_0]$, where $P_1, \dots, P_d$ denote the points in the $\Gal_k$ orbit $x$.

If $C$ has infinitely many closed points of degree $d$, then one of the following must be true:
\begin{enumerate}
\item $\Phi(x)=\Phi(y)$ for distinct closed points $x$ and $y$. As effective degree $d$ divisors, $x$ and $y$ have distinct support, so it follows there is a function $f$ of degree $d$ such that $\text{div}(f)=x-y$. Hence $f:C \rightarrow \mathbb{P}^1$ is a dominant morphism of degree $d$, and by Hilbert's irreducibility theorem \cite[Ch.9]{serre97} $f^{-1}(\mathbb{P}^1(k))$ contains infinitely many points of degree $d$. That is, there exists an infinite family of degree $d$ points ``parametrized by $\mathbb{P}^1$."
\item $\Phi$ is injective on the set of degree $d$ points. Since $\im \Phi$ is a closed subscheme of $\text{Jac}(C)$, Faltings's Theorem \cite{faltings} implies there exist a finite number of $k$-rational abelian subvarieties $A_i \subset \text{Jac}(C)$ and $k$-rational points $x_i \in \im \Phi$ such that
\[
(\im \Phi) (k) = \bigcup_{i=1}^n [x_i+A_i(k)].
\] Thus one of the $A_i$ has positive rank, and this gives an infinite family of degree $d$ points ``parametrized by $A_i$."
\end{enumerate}

Thus we see that the existence of infinitely many degree $d$ points implies we either have a degree $d$ function $f: C \rightarrow \mathbb{P}^1$ or else $\im \Phi$ contains the translate of a positive rank abelian subvariety of $\text{Jac}(C)$. In fact, the converse holds as well. Following \cite{BELOV}, we say a closed point $x\in C$ of degree $d$ is \textbf{isolated} if it does not belong to one of these infinite families of degree $d$ points, that is, if (1) there is no other point $y\in C^{(d)}(k)$ such that $\Phi(x) = \Phi(y)$ and (2) there is no positive rank abelian subvariety $A\subset \text{Jac}(C)$ such that $ \Phi(x) + A \subset \text{im}(\Phi)$. Moreover, we say points satisfying condition (1) are $\mathbb{P}^1$-isolated, and points satisfying condition (2) are AV-isolated. The following characterization of isolated points strengthens an observation of Frey \cite{frey}.

    \begin{thm}[Bourdon, Ejder, Liu, Odumodu, Viray, \cite{BELOV}]\label{thm:FiniteIsolated}
        Let $C$ be a curve over a number field.
        \begin{enumerate}
            \item There are infinitely many degree $d$ points on $C$ if and only if there is a degree $d$ point on $C$ that is \emph{not} isolated.
            \item There are only finitely many isolated points on $C$.
        \end{enumerate}
    \end{thm}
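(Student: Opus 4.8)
The plan is to prove both parts using the structural dichotomy already established in the discussion preceding the theorem statement, namely the analysis of the map $\Phi : C^{(d)} \to \operatorname{Jac}(C)$ and the decomposition of $(\operatorname{im} \Phi)(k)$ via Faltings's Theorem. For part (1), one direction is immediate from the definition: if $C$ has infinitely many degree $d$ points, then the two-case analysis given above shows that one of the two infinite families (the $\mathbb{P}^1$-parametrized family or the positive-rank abelian-variety-parametrized family) must exist, and any point belonging to such a family fails to be isolated; in particular there is a non-isolated degree $d$ point. For the converse, suppose $x$ is a degree $d$ point that is not isolated. Then either condition (1) in the definition fails, so there is $y \neq x$ with $\Phi(x) = \Phi(y)$, yielding a degree $d$ function $f : C \to \mathbb{P}^1$ whose fibers over $\mathbb{P}^1(k)$ produce infinitely many degree $d$ points by Hilbert irreducibility; or condition (2) fails, so there is a positive rank abelian subvariety $A \subseteq \operatorname{Jac}(C)$ with $\Phi(x) + A \subseteq \operatorname{im}\Phi$, and the $k$-rational points of $A$ (infinite since $A$ has positive rank) pull back under $\Phi$ to infinitely many degree $d$ points. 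Either way $C$ has infinitely many degree $d$ points.

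For part (2), the goal is to show that the number of isolated points on $C$, summed over all degrees $d$, is finite. First I would observe that by part (1) only finitely many degrees $d$ can support any degree $d$ point at all beyond which all points come in infinite families; more precisely, I would bound the degrees that can carry an isolated point. A natural approach is to invoke the gonality of $C$: if $d$ is at least the $k$-gonality, or more robustly if $d$ exceeds a bound coming from the structure of $\operatorname{Jac}(C)$, then every degree $d$ point lies in an infinite family, so no isolated points exist in large degree. It then remains to bound, for each of the finitely many relevant degrees $d$, the number of isolated points of that degree.

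The main obstacle is precisely this last finiteness-in-fixed-degree statement, and it is where Faltings's Theorem does the essential work. Fix $d$. The $\mathbb{P}^1$-isolated condition says $\Phi$ is injective at $x$, so it suffices to bound the number of $k$-points of $\operatorname{im}\Phi \subseteq \operatorname{Jac}(C)$ that are AV-isolated. Writing $(\operatorname{im}\Phi)(k) = \bigcup_{i=1}^n [x_i + A_i(k)]$ as in the Faltings decomposition, a point $\Phi(x)$ is AV-isolated exactly when it does not lie in any translate $[x_i + A_i(k)]$ with $A_i$ of positive rank; equivalently it lies in one of the finitely many translates with $A_i$ of rank zero, each of which is a finite set. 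Hence the AV-isolated points of $\operatorname{im}\Phi$ form a finite set, and intersecting with the $\mathbb{P}^1$-isolated (injectivity) locus keeps it finite. Summing the finite count over the finitely many admissible degrees $d$ gives the result. The delicate points to verify carefully are that the degree $d$ cannot be taken arbitrarily large while still supporting isolated points, and that the coset decomposition's rank-zero pieces genuinely capture all AV-isolated points; both follow from the definitions together with Faltings, but the uniform bound across all $d$ is the part requiring the most care.
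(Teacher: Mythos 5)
The paper does not actually prove this theorem; it imports it from \cite{BELOV}, and the only argument present in the text is the dichotomy preceding the statement (the map $\Phi$, Hilbert irreducibility, and the Faltings decomposition of $(\im\Phi)(k)$). Your part (1) reproduces that framework faithfully and is essentially correct, as is your fixed-degree finiteness argument in part (2): an AV-isolated point of $\im\Phi$ must lie in one of the finitely many rank-zero translates $x_i+A_i(k)$, each of which is a finite set, and $\mathbb{P}^1$-isolation makes $\Phi$ injective there. (The one point to make explicit is that a $k$-point of $\im\Phi$ lifts to an effective divisor because the fibers of $\Phi$ are complete linear systems, i.e.\ projective spaces with $k$-points whenever nonempty over $k$.)

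The genuine gap is the uniform bound on the degrees that can carry an isolated point. Your proposed mechanism --- ``if $d$ is at least the $k$-gonality, then every degree $d$ point lies in an infinite family'' --- is false. A degree $d$ map $f:C\to\mathbb{P}^1$ produces infinitely many degree $d$ points \emph{in the fibers of $f$}, but says nothing about an arbitrary degree $d$ point $x$ unless the divisor $x$ itself moves in a pencil. This paper's own example already refutes the claim: the degree $9$ isolated point on $X_1(28)$ lives on a curve admitting a map to $\mathbb{P}^1$ of degree at most $8$ (compose the degree $4$ map $X_1(28)\to X_1(14)$ with a degree $2$ map from the genus $1$ curve $X_1(14)$ to $\mathbb{P}^1$), so its gonality is below $9$. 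The correct bound comes from Riemann--Roch: if $\deg(x)=d\geq g(C)+1$, then $\ell(D_x)\geq d+1-g\geq 2$, so $|D_x|$ is a positive-dimensional projective space over $k$ containing a point $y\neq x$ with $\Phi(y)=\Phi(x)$, and $x$ is not $\mathbb{P}^1$-isolated. Hence isolated points have degree at most $g(C)$; combined with your per-degree finiteness this closes part (2). You flagged the uniform bound as ``the part requiring the most care,'' and it is exactly the step your writeup does not supply.
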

\noindent In particular, if there exist only finitely many points of degree $d$, then each degree $d$ point is isolated. However, having infinitely many degree $d$ points does not preclude the existence of additional isolated degree $d$ points. Some places in the literature use the term \textbf{sporadic} to denote a closed point $x \in C$ such that there are only finitely many points of degree at most $\deg(x)$. By Theorem \ref{thm:FiniteIsolated}, we see that every sporadic point is in fact an isolated point.

A key tool in studying isolated points is the following criterion for when the image of isolated points remain isolated. 
  \begin{thm}[Bourdon, Ejder, Liu, Odumodu, Viray, \cite{BELOV}]\label{LevelLowering}
    Let $f\colon C \to D$ be a finite map of curves and let $x\in C$ be an isolated point.  If $\deg(x) = \deg(f(x))\cdot\deg(f)$, then $f(x)$ is an isolated point of $D$.
  \end{thm}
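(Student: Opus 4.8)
The plan is to argue by contraposition: assuming $f(x)$ is \emph{not} isolated on $D$, I would produce a witness to $x$ failing the corresponding isolation condition on $C$, contradicting the hypothesis that $x$ is isolated. Write $y = f(x)$, $n = \deg(f)$, $e = \deg(y)$, and $d = \deg(x)$. The first and most important step is to reinterpret the numerical hypothesis $d = e\cdot n$ geometrically. Since $k(y) \subseteq k(x)$ and $d = [k(x):k(y)]\cdot e$, the hypothesis is equivalent to $[k(x):k(y)] = n = \deg(f)$. Comparing this with the length $n$ of the scheme-theoretic fiber $f^{-1}(y)$ over the closed point $y$ --- which decomposes as $\sum_{x_i \mapsto y} \varepsilon_{x_i}[k(x_i):k(y)]$ with $\varepsilon_{x_i}$ the ramification indices --- forces $x$ to be the unique point above $y$, with $\varepsilon_x = 1$. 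In other words, the degree hypothesis is exactly the statement that $f^{\ast} y = x$ as (effective, $k$-rational) divisors on $C$. This clean reformulation is what makes the pullback arguments below work.

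Next I would set up the pullback maps. A finite $k$-morphism $f$ induces a pullback on divisors sending a $k$-rational effective divisor $\delta$ of degree $m$ on $D$ to the $k$-rational effective divisor $f^{\ast}\delta$ of degree $mn$ on $C$, together with a homomorphism $f^{\ast}\colon \operatorname{Jac}(D) \to \operatorname{Jac}(C)$ on Jacobians. Fixing base points and computing with the Abel--Jacobi maps $\Phi_C$ and $\Phi_D$, a short calculation shows $\Phi_C(f^{\ast}\delta) = f^{\ast}\Phi_D(\delta) + c$ for a fixed class $c \in \operatorname{Jac}(C)$ independent of $\delta$ (the discrepancy between the two base divisors). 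I would also record that $\ker f^{\ast}$ is finite, since $f_{\ast}\circ f^{\ast} = [\deg f]$ on $\operatorname{Jac}(D)$; hence $f^{\ast}$ restricts to an isogeny onto its image on any abelian subvariety, preserving positive rank.

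With these in hand the two cases are formal. If $y$ is not $\mathbb{P}^1$-isolated, there is a $k$-rational effective divisor $y' \neq y$ of degree $e$ with $\Phi_D(y') = \Phi_D(y)$; applying $f^{\ast}$ and using the compatibility (the constant $c$ cancels) gives $\Phi_C(f^{\ast}y') = \Phi_C(f^{\ast}y) = \Phi_C(x)$, while $f^{\ast}y' \neq x$ because its support lies over $y' \neq y$ and is therefore disjoint from $x$. This exhibits a second degree $d$ divisor with the same image as $x$, contradicting that $x$ is $\mathbb{P}^1$-isolated. If instead $y$ is not AV-isolated, there is a positive rank abelian subvariety $A \subseteq \operatorname{Jac}(D)$ with $\Phi_D(y) + A \subseteq \im \Phi_D$; pulling back each translate and again using $f^{\ast}y = x$ yields $\Phi_C(x) + f^{\ast}(A) \subseteq \im \Phi_C$, and since $f^{\ast}(A)$ has positive rank this contradicts that $x$ is AV-isolated.

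The main obstacle is the first step: correctly establishing that the numerical hypothesis is equivalent to $f^{\ast}y = x$, which requires tracking the ramification and residue-degree contributions in the fiber, and then verifying that pullback of divisors interacts with the Abel--Jacobi map up to a harmless constant. Once $f^{\ast}y = x$ is known and the compatibility relation is in place, the remaining case analysis is a direct diagram chase; the only additional subtlety is the finiteness of $\ker f^{\ast}$ needed to preserve positive rank in the AV case.
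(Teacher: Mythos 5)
The paper does not prove this statement itself---it is quoted from \cite{BELOV} without proof---so there is no in-paper argument to compare against. Your proposal is correct and follows essentially the argument given in the cited source: the degree hypothesis forces $f^{*}(f(x)) = x$ as divisors, and the compatibility of $f^{*}$ with the Abel--Jacobi maps (together with finiteness of $\ker f^{*}$) transports any witness to non-isolation of $f(x)$, in either the $\mathbb{P}^1$ or the positive-rank abelian subvariety sense, to a witness for $x$. The only point stated tersely is that $f^{*}y' \neq x$ in the $\mathbb{P}^1$ case, but this is fine: since $y$ is a prime divisor and $y'$ is effective of the same degree with $y' \neq y$, the point $y$ cannot lie in $\operatorname{Supp}(y')$, so the supports of $f^{*}y'$ and $x$ are indeed disjoint.
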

  
  \subsection{CM elliptic curves} \label{CMsection}
Let $E$ be an elliptic curve defined over a number field $F$. We say $E$ has \textbf{complex multiplication}, or CM, if $\End_{\overline{F}}(E)$ is strictly larger than $\Z$. In this case, $\End_{\overline{F}}(E) \cong \OO$, an order in an imaginary quadratic field $K$. If $\OO_K$ denotes the ring of integers in $K$, then $\OO$ is a subring of $\OO_K$ of index $\ff$, where $\ff$ is called the \textbf{conductor} of $\OO$, and it is the unique subring of $\OO_K$ of this index. Explicitly, we have
    \[
  \OO=\Z+\ff\OO_K.
  \]
 Thus an order $\OO$ in $K$ can be uniquely determined by its \textbf{discriminant}
  \[
  \Delta \coloneqq \ff^2\Delta_K,
  \] where $\Delta_K$ denotes the discriminant of $K$.
See \cite[Lemma 7.2]{cox} for details.

If $E/F$ is an $\OO$-CM elliptic curve, then elements of $\Gal_{FK}$ commute with elements of $\OO$ in their action on $E[N]$, a free $\OO/N\OO$-module of rank 1 by \cite[Lemma 1]{Parish89}. This implies the mod $N$ Galois representation of $E/FK$ can be expressed as
\[
\rho_{E,N}:\Gal_{FK} \rightarrow \Aut_{\OO/N\OO}(E[N]) \cong (\OO/N\OO)^{\times}.
\]
Thus we may interpret the action of $\Gal_{FK}$ on $N$-torsion points of an $\OO$-CM elliptic curve as the action of $(\OO/N\OO)^{\times}$ on a free $\OO/N\OO$-module of rank 1. We denote $(\OO/N\OO)^{\times}$ by $C_N(\OO)$ and call it the \textbf{mod $N$ Cartan subgroup}. 

If we fix a model of $E$ defined over $K(j(E))$, the action of $\OO$ on points of $E$ is rationally defined. Denote by $\overline{E[N]}$ the orbits of points in $E[N]$ under the action of $\OO^{\times}$. The action of $\OO/N\OO$ on $E[N]$ induces an action of the \textbf{reduced mod N Cartan subgroup} $\overline{C_{N}(\OO)}$ on $\overline{E[N]}$, where
\[
\overline{C_{N}(\OO)}\coloneqq C_{N}(\OO)/q_N(\OO^{\times})
\]
and $q_N:\OO \rightarrow \OO/N\OO$ is the natural map. For any point $P \in E$ of order $N$, the degree of $K(j(E))(\mathfrak{h}(P))$ over $K(j(E))$ is equal to the size of the $\overline{C_{N}(\OO)}$-orbit of $\overline{P} \in \overline{E[N]}$. From this we can deduce the degree of $[E,P]$ on $X_1(N)$ viewed as a curve over $K$. See Section 7A of \cite{BC1} for details.
                
\section{Points of Odd Degree on $X_1(N)$}
In this section we will prove Theorem \ref{OddDegThm}. We begin with preliminary lemmas in $\S3.1\mhyphen 3.2$, and the theorem itself is proved in $\S3.3$. A refinement of Theorem \ref{OddDegThm} is given in $\S3.4$. A key observation is that aside from one exceptional $j$-invariant, to have a point $x=[E,P]\in X_1(n)$ of odd degree with $j(x) \in \Q$, there must exist a model of $E/\Q$ with a rational cyclic $p$-isogeny for all odd primes $p$ dividing $n$. Thus Theorem \ref{IsogClassification} significantly restricts the possibilities for $n$. In the case of CM elliptic curves, our results can be deduced from work of Aoki \cite{aoki95}. For non-CM elliptic curves, Theorem \ref{OddDegThm} follows from classification results for Galois representations of elliptic curves over $\Q$, as outlined in $\S2.1$, along with various computations which address special cases. In particular, many of the fiber product computations we require were originally performed by Daniels and Gonz\'{a}lez-Jim\'{e}nez \cite{DanielsGJ20}, \cite{DanielsGJ}. We also employ a useful result about lifting rational points due to Najman and Gonz\'{a}lez-Jim\'{e}nez \cite[Prop. 4.6]{GJN}.

\subsection{Connection with rational cyclic isogenies}   
        
\begin{lem} \label{OddDegLem}
Let $E/\Q$ be an elliptic curve and $P \in E(\overline{\Q})$ a point of order $p n$ where $p\geq 3$ is prime and $n \in \Z^+$. Then one of the following occurs:
\begin{enumerate}
\item $p \in \{3,5,7,11,13, 19, 43, 67, 163\}$ and $E$ has a rational $p$-isogeny,
\item $p=7$ and $j(E)= 3^3\cdot 5\cdot 7^5/2^7$, or 
\item the residue field of $[E,P] \in X_1(pn)$ has even degree.
\end{enumerate}
\end{lem}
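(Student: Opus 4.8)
The plan is to first reduce to the case of a point of order exactly $p$. Consider the natural $\Q$-rational projection $f\colon X_1(pn) \to X_1(p)$, $[E,P]\mapsto [E,nP]$, from Proposition~\ref{prop:Degree}. Since $nP$ has order $p$ and the residue field $\Q(f(x))$ embeds into $\Q(x)$, we have $\deg(f(x)) \mid \deg(x)$; hence if $\deg([E,P])$ is odd, so is $\deg([E,nP])$. It therefore suffices to treat $Q := nP$ of order $p$ and to show that odd degree of $[E,Q]\in X_1(p)$ forces conclusion (1) or (2).

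Next I translate odd degree into group theory. Let $G := \im \rho_{E,p} \subseteq \GL_2(\F_p)$ and let $v \in (\Z/p\Z)^2$ be the vector corresponding to $Q$. Using the Weber-function description of the residue field together with Lemma~\ref{save2} in the non-CM case and the reduced Cartan description of \S\ref{CMsection} in the CM case, one identifies $\deg([E,Q])$ with the size of the $G$-orbit of $\pm v$, i.e. $[G : \mathrm{Stab}_G(\pm v)]$ where $\mathrm{Stab}_G(\pm v) = \{g\in G : gv = \pm v\}$. This index is odd if and only if $\mathrm{Stab}_G(\pm v)$ contains a Sylow $2$-subgroup of $G$, equivalently if and only if some Sylow $2$-subgroup of $G$ stabilizes the line $\ell = \langle v\rangle$ and acts on it by $\pm 1$.

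I then run a case analysis on the maximal subgroup of $\GL_2(\F_p)$ containing $G$, fed by the classification of mod-$p$ images of $E/\Q$. If $G \supseteq \mathrm{SL}_2(\F_p)$, then $G$ is transitive on nonzero vectors, so the orbit of $\pm v$ has size $(p^2-1)/2$, which is divisible by $4$ since $p^2\equiv 1 \pmod 8$; the degree is even and no odd-degree point arises. If $G$ fixes a line, then $E$ has a rational $p$-isogeny, and Theorem~\ref{IsogClassification} forces $p \in \{3,5,7,11,13,17,19,37,43,67,163\}$; for $p = 17, 37$ one checks on the finitely many explicitly known non-cuspidal points of $X_0(p)$ that every point of order $p$ has even degree (for $p=17$ this follows because $(p-1)/2$ is a power of $2$ together with the non-existence of order-$17$ points over quadratic fields), so these primes are excluded and we land in conclusion (1). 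The remaining possibility is that $G$ lies in the normalizer of a split or non-split Cartan subgroup but fixes no line, or is exceptional; here I compute the parity of the $\pm v$-orbit directly and invoke the determination of rational points on the Cartan modular curves to show the orbit is even for every prime, except that for $p = 7$ the unique non-CM curve whose image is properly contained in the split Cartan normalizer, namely the one with $j = 3^3\cdot 5\cdot 7^5/2^7$, produces an odd-degree point with no rational $7$-isogeny, yielding conclusion (2). The CM curves $E/\Q$ are handled through the reduced Cartan formalism of \S\ref{CMsection} (equivalently, via Aoki): for them the primes $19, 43, 67, 163$ arise from a ramified prime above $p$, which supplies the required rational $p$-isogeny.

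I expect the main obstacle to be the Cartan-normalizer cases. One must compute the parity of the $\pm v$-orbit for \emph{every} admissible $G$ inside such a normalizer—the subtlety being that $G$ may be a proper subgroup for which the full normalizer's transitivity is unavailable—and then feed in the correct non-existence results for rational points on the split and non-split Cartan modular curves in order to isolate $j = 3^3\cdot 5\cdot 7^5/2^7$ as the single exception. Ruling out $p = 17$ and $p = 37$ despite the existence of rational isogenies is a secondary delicate point, requiring explicit control of the degrees of order-$p$ points on the known isogeny classes.
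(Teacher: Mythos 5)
Your proposal is correct and follows essentially the same route as the paper: reduce to the image $[E,nP]\in X_1(p)$, handle CM curves via Aoki and the Cartan formalism, and for non-CM curves run through the classified mod-$p$ images of $E/\Q$ (surjective, Borel, Cartan-normalizer, exceptional), checking parity of the $\pm v$-orbit in each case to isolate the split-Cartan exception $j=3^3\cdot 5\cdot 7^5/2^7$ at $p=7$ and to discard $p=17,37$ because their isogeny curves only yield even-degree points. The paper simply outsources the orbit-parity computations to the tables of Gonz\'alez-Jim\'enez--Najman and Lozano-Robledo rather than redoing the Sylow-$2$ analysis, but the underlying argument is the same.
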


\begin{proof} If $E$ has complex multiplication and the residue field of $[E,P] \in X_1(p n)$ has odd degree, then there is a number field $F$ of odd degree and a model of $E/F$ where $P \in E(F)$. By Aoki \cite[Cor. 9.4]{aoki95}, $E$ has CM by an order in $K=\Q(\sqrt{-p})$. Since $j(E) \in \Q$, the field $K$ has class number 1, and $p \in \{3,7,11,19,43,67,163\}$. Moreover the model of $E$ over $\Q$ has a rational cyclic $p$-isogeny; see for example \cite[Prop. 5.7]{BCS}. From now on we assume $E$ is non-CM and fix a model of $E/\Q$. 

If $\rho_{E,p}$ is surjective, then the residue field of $[E, nP] \in X_1(p)$ has even degree by \cite[Theorem 5.1]{LR} and Lemma \ref{save2}, and hence the residue field of $[E,P] \in X_1(p n)$ has even degree. Thus we may assume $\rho_{E,p}$  is not surjective. 

First, suppose $p \leq 13$. Then as discussed in $\S2.1$, the subgroups that arise as $\im \rho_{E,p}$ are known. By checking each case, see for example \cite[Tables 1 \& 2]{GJN} and Lemma \ref{save2}, we see that we are in case (i) or (iii) except when $p=7$ and $\im \rho_{E,7}$ is conjugate to 7Ns.2.1 or 7Ns.3.1 (here we use LMFDB label{s}, also following \cite{sutherland}). By \cite[Theorem 1.5]{ZywinaImages}, $\im \rho_{E,7}$ is conjugate to one of these groups only if $j(E)=3^3\cdot 5\cdot 7^5/2^7$.

Next, suppose $p \geq 17$. {Here, $\rho_{E,p}$ is known to have at least 4 possible images aside from $\GL_2(\Z/p\Z)$, each of which is contained in a Borel subgroup. For each of these,} we are in case (iii). (See, e.g., \cite[Table 2]{GJN} and Lemma \ref{save2}.) If $\im \rho_{E,p}$ is not {one of these known groups}, then it is conjugate to one of two subgroups {of the normalizer of a non-split Cartan subgroup. See for example  \cite[Theorem 3.2]{GJN}.} The degree of the residue field of $[E, nP]$ is even in either case by \cite[Theorem 5.6]{GJN} and Lemma \ref{save2}.
\qedhere 
\end{proof}

\subsection{Elliptic curves with {rational cyclic isogenies of degree 15 or 21}}

\begin{prop} \label{prop:ExtraJ}
Let $E/\Q$ be an elliptic curve, and suppose $[E,P] \in X_1(n)$ has odd degree.  
\begin {enumerate}
\item If $E$ has a rational cyclic 15-isogeny, then $n=2^a3^b$ or $2^a5^c$ where $a \leq 1$. 
\item If $E$ has a rational cyclic 21-isogeny, then $n=2^a3^b7^c$ where $a \leq 1$.
\end{enumerate}
\end{prop}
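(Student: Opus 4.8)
The plan is to pin down the prime factorization of $n$ in two stages: first decide which primes may divide $n$, then control the exponents. Here $P$ has order $n$. A rational cyclic $15$-isogeny equips $E$ with $\Q$-rational cyclic subgroups of orders $3$ and $5$ (the $3$- and $5$-primary parts of its kernel), and a rational cyclic $21$-isogeny equips $E$ with rational cyclic subgroups of orders $3$ and $7$. The structural engine is Theorem~\ref{IsogClassification}: if $E$ had an additional rational cyclic subgroup of prime order $\ell \nmid 15$ (resp.\ $\ell \nmid 21$), the subgroup it generates together with the given one would be cyclic of order $15\ell$ (resp.\ $21\ell$), and every such product with $\ell \notin \{3,5\}$ (resp.\ $\ell \notin \{3,7\}$) exceeds $19$ and avoids $\{21,25,27,37,43,67,163\}$, a contradiction.

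I would first determine the prime support of $n$. Let $\ell \ge 3$ be a prime dividing $n$. The natural $\Q$-rational map $X_1(n)\to X_1(\ell)$ sends $[E,P]$ to $[E,(n/\ell)P]$, a point of order $\ell$ whose residue field embeds in that of $[E,P]$; hence it too has odd degree. Applying Lemma~\ref{OddDegLem} to this point gives either (a) a rational $\ell$-isogeny with $\ell\in\{3,5,7,11,13,19,43,67,163\}$, or (b) $\ell=7$ with $j(E)=3^3\cdot5\cdot7^5/2^7$. Case (b) cannot occur in part~(i), since that $j$-invariant does not lie on $X_0(15)$ and so admits no rational cyclic $15$-isogeny; in part~(ii) the prime $7$ already divides $21$. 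In case (a), the paragraph above forces $\ell\in\{3,5\}$ (resp.\ $\ell\in\{3,7\}$). Since oddness of the degree rules out conclusion (iii) of Lemma~\ref{OddDegLem} for every $\ell$, no prime outside the listed set divides $n$ at all, so the odd part of $n$ is supported on $\{3,5\}$ (resp.\ $\{3,7\}$).

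Next I would eliminate the remaining forbidden configurations. For part~(i) I must exclude $15\mid n$: composing with $X_1(n)\to X_1(15)$ as above reduces this to showing that no point of order $15$ on a curve with a rational cyclic $15$-isogeny has odd degree. For the bound $a\le 1$ in both parts, the map $X_1(n)\to X_1(4)$ reduces matters to showing no point of order $4$ on such a curve has odd degree; a first reduction is that $E$ has no rational $2$-isogeny at all, since one would combine with the given isogeny to give a rational cyclic $30$- (resp.\ $42$-) isogeny, contradicting Theorem~\ref{IsogClassification}. Because $X_0(15)$ and $X_0(21)$ are genus-one curves with finitely many rational points, only finitely many $j$-invariants admit the relevant isogeny, and each remaining claim becomes a finite check with the mod-$15$ and mod-$4$ images: by Lemma~\ref{save2} the degree of $[E,Q]$ is governed by the $\im\rho$-orbit of $\pm Q$, and one verifies every such orbit is even. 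Where a uniform statement in the $5$-primary direction helps, Theorem~\ref{isogenyTHM} pins down $\im\rho_{E,5^\infty}$ exactly. The CM curves are subsumed in Lemma~\ref{OddDegLem}: for rational $j$, Aoki's theorem forces the CM field to be $\Q(\sqrt{-\ell})$ of class number one, so in particular $5\nmid n$ automatically.

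I expect the main obstacle to be excluding odd-degree points of order $15$ (part~(i)) together with the sharp $2$-adic bound. The subtlety is that odd degree does \emph{not} force the offending cyclic subgroup to be $\Q$-rational: when $\im\rho_{E,2}$ is transitive on the nonzero $2$-torsion (as it must be here, there being no rational $2$-isogeny), the order-$4$ analysis only produces a cyclic $4$-subgroup defined over a cubic field, where Theorem~\ref{IsogClassification}, being a statement over $\Q$, yields no immediate contradiction. This is precisely where the finiteness of the relevant $j$-invariants is indispensable: one inspects the finitely many mod-$4$ and mod-$15$ images directly and confirms that every orbit of $\pm Q$ has even size.
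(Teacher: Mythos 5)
Your proposal is correct and follows essentially the same route as the paper: both use Lemma \ref{OddDegLem} together with Theorem \ref{IsogClassification} to restrict the odd part of $n$ to $\{3,5\}$ (resp.\ $\{3,7\}$), and then reduce to a finite verification over the eight $j$-invariants admitting a rational cyclic $15$- or $21$-isogeny that the corresponding points on $X_1(15)$ and $X_1(4)$ have even degree. The paper carries out this finite check by factoring the $15$th and $4$th division polynomials of representative curves, which is the same computation as your proposed inspection of the mod-$15$ and mod-$4$ Galois images.
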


\begin{proof}
{If} $E$ has a rational cyclic 15-isogeny or 21-isogeny, then $j(E)$ is one of the eight values listed in \cite[Table 4]{LR} and we may pick a representative curve for each value. Each representative in case $(i)$ has a rational {cyclic $p$-isogeny for a prime $p$ iff $p=3$ or $5$. Similarly, each representative in case $(ii)$ has a rational cyclic $p$-isogeny iff $p=3$ or 7}. By Lemma \ref{OddDegLem}, the prime divisors of $n$ are $\{2,3,5\}$ or $\{2,3,7\}$, so we consider numbers of the form $n=2^a3^b5^c$ in case $(i)$ and $n=2^a3^b7^c$ in case $(ii)$. Computing the $15^{th}$ division polynomial for each representative in case $(i)$, we see that the point on $X_1(15)$ corresponding to $E$ has even degree. So $n=2^a3^b$ or $2^a5^c$. Finally, computing the $4^{th}$ division polynomial for all eight representatives shows that the point on $X_1(4)$ corresponding to $E$ has even degree. Thus $a \leq 1$ in both cases.
\end{proof}

{\begin{rmk}
Computing the $21^{st}$ division polynomial for each representative in case $(ii)$, we see that the point on $X_1(21)$ corresponding to $E$ can have odd degree. 
\end{rmk}}

\subsection{Proof of Theorem \ref{OddDegThm}}
Let $x=[E,P] \in X_1(n)$ be a point of odd degree with $j(x) \in \Q$. First suppose $E$ has complex multiplication. Then there is a number field $F$ of odd degree and a model of $E/F$ where $P \in E(F)$. By Aoki \cite[Cor. 9.4]{aoki95}, $n=2^ap^b$ for an odd prime $p$ and $a \leq 2$. If $b>0$, then $a \leq 1$ and $E$ has CM by an order in $K=\Q(\sqrt{-p})$. Since $j(E) \in \Q$, it follows that $K$ has class number 1, and so $p \in \{3,7,11,19, 43, 67, 163\}$. Moreover any model of $E$ over $\Q$ has a rational cyclic $p$-isogeny; see for example \cite[Prop. 5.7]{BCS}.

From now on we will assume $E$ is non-CM. We fix a model of $E/\Q$. First suppose $j(E) \neq 3^3\cdot 5\cdot 7^5/2^7$. If $p \mid n$ where $p \geq 3$ is prime, then $E$ has a rational cyclic $p$-isogeny by Lemma \ref{OddDegLem}. Since $E$ is non-CM, $p \in \{3,5,7,11,13\}$ (see, for example, \cite[Table 4]{LR}). If $p_1$ and $p_2$ divide $n$ where $p_i \geq 3$ are distinct primes, then $E$ has a rational cyclic $p_1p_2$-isogeny. By Theorem \ref{IsogClassification}, this cannot happen unless $E$ has a rational cyclic isogeny of degree 15 or degree 21. Such elliptic curves are addressed in Proposition \ref{prop:ExtraJ}. Thus $n=2^ap^b$ for $p \in \{3,5,7,11,13\}$ and $E$ has a rational cyclic $p$-isogeny.

We next address the exponent of 2. First suppose $E$ has a rational cyclic 2-isogeny, which means that any point of order 2 defined over an extension of odd degree is in fact defined over $\Q$. By \cite[Prop. 4.6]{GJN}, any point on $E$ of order $2^a$ corresponding to a point on $X_1(2^a)$ of odd degree must in fact be a point in $X_1(2^a)(\Q)$. By \cite{Levi}, there is no elliptic curve over $\Q$ with a rational point of order 16, so $a \leq 3$. If $E$ does not have a rational cyclic 2-isogeny, then by the classification of 2-adic images of non-CM elliptic curves over $\Q$ due to Rouse and Zureick-Brown \cite{RouseDZB}, a point of order 4 will occur in even degree unless $E$ corresponds to a rational point on the curve labeled X20 by \cite{RouseDZB}. See the data associated to \cite[Cor. 3.4 and Cor. 3.5]{GJLR17} showing the degrees of a point of order $2^a$ on an elliptic curve defined over $\Q$ based on the Rouse-Zureick-Brown classification.  We note in particular that $a \leq 2$ if $E$ has no 2-isogeny. 

It remains to show that if $b>0$, then $a\leq2$. By the previous paragraph, we may assume $E$ has a rational 2-isogeny. Since $E$ is non-CM elliptic curve which also has a rational cyclic $p$-isogeny, Theorem \ref{IsogClassification} and \cite[Table 4]{LR} imply $p \leq 5$. 
\begin{itemize}
\item Suppose $p=5$. Since $E$ gives a rational point on $X_1(2)$ and $\deg(X_1(4) \rightarrow X_1(2))=2$ by Proposition \ref{prop:Degree}, $E$ gives a point on $X_1(4)$ of degree 1 or 2. If it is in degree 1, then $E$ has a 4-isogeny, which contradicts Theorem \ref{IsogClassification}. Thus any point on $X_1(4)$ corresponding to $E$ has even degree and $a \leq 1$.
\item Suppose $p=3$. As in the previous case, the only way $E$ can give a point of odd degree on $X_1(4)$ is if it gives a rational point on $X_1(4)$. By \cite[Prop. 4.6]{GJN}, the only way $E$ can give a point on $X_1(8)$ of odd degree is if it is in degree 1, which cannot happen by Theorem \ref{IsogClassification}. Thus $a \leq 2$. 

\end{itemize}

If $j(E)= 3^3\cdot 5\cdot 7^5/2^7$, it suffices to pick a particular elliptic curve $E/\Q$ with this $j$-invariant. Since $E$ has no rational isogenies, $\Supp(n) \subseteq\{2, 7\}$ by Lemma \ref{OddDegLem}. Computing division polynomials shows that any point on $X_1(4)$ corresponding to $E$ has degree 6, so $a \leq 1$.

\subsection{Refined bounds on exponent of 2}
Often we may improve the bound on the exponent of 2 found in Theorem \ref{OddDegThm}.
\begin{prop}\label{OddDegProp}
Let $x\in X_1(2^ap^b)$ be a point of odd degree where $a,b$ are nonnegative integers and $p \geq 5$ is prime. Suppose 
\[
j(x) \in \Q \setminus \{-3^3\cdot 13 \cdot 479^3/2^{14}, 3^3\cdot 13/2^2\}.
\] If $b>0$, then $a \leq 1$. 

\end{prop}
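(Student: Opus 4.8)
The plan is to upgrade the bound $a \le 2$ coming from Theorem \ref{OddDegThm} to $a \le 1$, so I assume for contradiction that $a = 2$. If $E$ has complex multiplication, the CM analysis already establishes that $b > 0$ forces $a \le 1$, so I may assume $E$ is non-CM. Since $p \ge 5$, and since the exceptional value $j = 3^3\cdot 5 \cdot 7^5/2^7$ gives $a \le 1$ by the last sentence of Theorem \ref{OddDegThm}, Lemma \ref{OddDegLem} shows that $E$ has a rational cyclic $p$-isogeny; as $E$ is non-CM this forces $p \in \{5,7,11,13\}$. Because $a = 2$, projecting the odd-degree point $x \in X_1(2^2 p^b)$ along $X_1(4p^b) \to X_1(4p)$ (and further to $X_1(4)$) produces points of odd degree, since $\deg(f(x)) \mid \deg(x)$; in particular $E$ gives a point of odd degree on both $X_1(4)$ and $X_1(4p)$.

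I would then split on whether $E$ admits a rational $2$-isogeny. If it does, then $E$ carries a rational cyclic $2p$-isogeny. Theorem \ref{IsogClassification} rules out a cyclic $22$- or $26$-isogeny outright, so $p \ne 11,13$, and the absence of non-CM rational points on $X_0(14)$ rules out $p = 7$, leaving $p = 5$. But in the $p = 5$ case, exactly as in the corresponding bullet of the proof of Theorem \ref{OddDegThm}, the rational point $E$ determines on $X_1(2) \cong X_0(2)$ forces any odd-degree point on $X_1(4)$ to be rational, hence to give a rational cyclic $4$-isogeny, contradicting Theorem \ref{IsogClassification}. Thus $a \le 1$, contradicting $a = 2$.

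The substantive case is when $E$ has no rational $2$-isogeny. Here the Rouse--Zureick-Brown classification of $2$-adic images, as invoked in the proof of Theorem \ref{OddDegThm}, shows that a non-CM curve without a rational $2$-isogeny can carry a point of order $4$ in odd degree only if it corresponds to a rational point on the curve $\mathrm{X20}$. I would therefore form, for each $p \in \{5,7,11,13\}$, the fiber product $\mathrm{X20} \times_{X(1)} X_0(p)$ over the $j$-line, which parametrizes elliptic curves carrying both the $\mathrm{X20}$ two-adic structure and a rational cyclic $p$-isogeny, and determine all of its non-cuspidal, non-CM rational points. For each resulting $j$-invariant I would then compute the degree of the associated point on $X_1(4p)$ (via the mod $4$ and mod $p$ images, or by the $(4p)$th division polynomial of a representative curve); since an odd-degree point on $X_1(4p^b)$ must map to an odd-degree point on $X_1(4p)$, it suffices to show this degree is even in every case except the two excluded $j$-invariants $-3^3\cdot 13 \cdot 479^3/2^{14}$ and $3^3\cdot 13/2^2$, the latter being the degree $9$ point on $X_1(28)$ arising for $p = 7$.

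I expect the main obstacle to lie entirely in this last case: computing the four fiber products, controlling their genera by Riemann--Hurwitz, and provably determining all of their rational points. Because $\mathrm{X20}$ contributes an infinite family of $j$-invariants, each fiber product is a genuinely new curve whose rational points must be found — for positive genus by realizing it as a cover of a known modular curve or elliptic curve and applying Chabauty or Mordell--Weil sieve methods — after which one must still verify that no surviving non-CM $j$-invariant other than the two excluded ones yields an odd-degree point on $X_1(4p)$. Several of these fiber-product computations coincide with ones already carried out by Daniels and Gonz\'alez-Jim\'enez, which I would cite and reuse rather than redo.
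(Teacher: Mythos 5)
Your proposal is correct and follows essentially the same route as the paper: after disposing of the CM case and the exceptional $j$-invariant, you reduce via the Rouse--Zureick-Brown classification to elliptic curves giving rational points on X20, and then analyze the fiber products of X20 with $X_0(p)$ for $p\in\{5,7,11,13\}$, reusing the Daniels--Gonz\'alez-Jim\'enez computations, which is exactly what the paper does (it handles $p=11$ by division polynomials for the finitely many relevant curves and $p=13$ via the simpler fiber product with X3, but these are implementation details rather than a different argument). The only cosmetic difference is your explicit elimination of $p=7,11,13$ in the rational-$2$-isogeny case via cyclic $2p$-isogenies, where the paper cites Theorem \ref{IsogClassification} and Table 4 of \cite{LR} directly.
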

\begin{proof}
Let $x=[E,P] \in X_1(2^ap^b)$ be as in the theorem statement, where $b >0$. If $E$ has CM, the claim follows from Aoki \cite[Cor. 9.4]{aoki95}, so henceforth we assume $E$ is non-CM and fix a model of $E/\Q$. If $j(E) = 3^3\cdot 5\cdot 7^5/2^7$, then computing division polynomials shows that any point on $X_1(4)$ corresponding to $E$ has degree 6, so $a \leq 1$. If $j(E) \neq 3^3\cdot 5\cdot 7^5/2^7$, then by Lemma \ref{OddDegLem}, $E$ has a rational $p$-isogeny, and so $p \in \{5,7,11,13\}$ by \cite[Table 4]{LR}. 

If $p=5$ and $E$ has a rational 2-isogeny, then $a\leq 1$ by the proof in $\S3.3$. If $p >5$, then $E$ has no rational 2-isogeny by Theorem \ref{IsogClassification} and \cite[Table 4]{LR}. So suppose $E$ has no rational 2-isogeny. By the classification of 2-adic images due to Rouse and Zureick-Brown \cite{RouseDZB}, any point on $X_1(4)$ corresponding to $E$ is of even degree unless $E$ corresponds to a rational point on X20. So it suffices to consider the fiber product of X20 and $X_0(p)$. We consider each prime separately:
\begin{enumerate}
\item If $p=5$, then Daniels and Gonz\'{a}lez-Jim\'{e}nez \cite[Proposition 6(k)]{DanielsGJ20} show the fiber product of X20 and $X_0(5)$ has only cusps. So $a \leq 1$. 
\item If $p=7$, Daniels and Gonz\'{a}lez-Jim\'{e}nez \cite[Proposition 6(s)]{DanielsGJ20} compute the fiber product of $X_0(7)$ and X20. They show the non-cuspidal rational points on this curve correspond to $j$-invariants $-3^3\cdot 13 \cdot 479^3/2^{14}$ and $3^3\cdot 13/2^2$, which appear in the theorem statement. So aside from these two $j$-invariants, $a \leq 1$.
\item If $p=11$, there are only a finite number of elliptic curves over $\Q$ with a rational cyclic 11-isogeny. See \cite[Table 4]{LR}. Computing division polynomials shows that $a \leq 1$.
\item If $p=13$, it will suffice to show the fiber product of $X_0(13)$ and X3 has no non-cuspidal rational points, since X20 covers X3. By Daniels and Gonz\'{a}lez-Jim\'{e}nez \cite[Table 8]{DanielsGJ}, this curve only has 2 rational points, and both are cusps. Indeed, there are two rational cusps $0, \infty$ on $X_0(13)$, and X3 $\cong \mathbb{}P^1_{\mathbb{\Q}}$. This means there are two cuspidal points $(0, \infty)$, $(\infty, \infty)$ in the fiber product and $a \leq 1$.\qedhere
\end{enumerate}
\end{proof}

\section{Non-CM Isolated Points in Odd Degree}
Here, we build on the results of Section 3 to prove that the non-CM $j$-invariants in $j(\mathcal{I}_{odd}) \cap \Q$ are $-3^2\cdot 5^6/2^3$ and $3^3\cdot 13/2^2$, giving the non-CM part of Theorem \ref{MainThm}. In $\S4.1$, we address the case of elliptic curves $E/\Q$ with a rational cyclic 25-isogeny. The argument uses constraints on $\im \rho_{E,5^{\infty}}$ due to Greenberg \cite{greenberg2012} in addition to work of Jeon, Kim, and Schweizer concerning intermediate modular curves \cite{JeonKim}, \cite{JKS20}. Following this, in $\S4.2\mhyphen4.5$, we show that aside from the two exceptional $j$-invariants noted above, any isolated point $x \in X_1(n)$ corresponding to a non-CM elliptic curve with $\deg(x)$ odd and $j(x) \in \Q$ must {map to an isolated point} on $X_1(54)$ or $X_1(162)$. We use explicit bounds on the level of the $m$-adic Galois representation as computed in \cite{BELOV}, {which at times can be improved to account for known instances of entanglement and constraints on ramification in torsion point fields (see Lemma \ref{extraLem1}),} in addition to classification results concerning the 2-adic \cite{RouseDZB} and 3-adic \cite{3adicimages} images of Galois representations of non-CM elliptic curves. In $\S4.6$, we show that any isolated point $x$ on $X_1(54)$ or $X_1(162)$ with $\deg(x)$ odd and $j(x) \in \Q$ would correspond to a rational point on an explicit genus 4 curve, but in fact all such rational points correspond to cusps. This leaves only points associated to $j$-invariants $-3^2\cdot 5^6/2^3$ and $3^3\cdot 13/2^2$. The first corresponds to an isolated point of degree 3 on $X_1(21)$ identified by Najman \cite{najman16}. In $\S4.7$, we show that there is a point $x \in X_1(28)$ of degree 9 with $j(x)= 3^3\cdot 13/2^2$ such that the associated Riemann-Roch space is 1-dimensional. Since the Jacobian of $X_1(28)$ has rank 0, this is enough to conclude the point is isolated.

\subsection{Elliptic curves with a rational cyclic 25-isogeny}
\begin{prop} \label{25lem}
Let $x \in X_1(2^a5^b)$ be a point corresponding to a non-CM elliptic curve with $\deg(x)$ odd and $j(x) \in \Q$. If there exists $y \in X_0(25)(\Q)$ with $j(y)=j(x)$, then $x$ is not isolated.
\end{prop}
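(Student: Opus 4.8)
The plan is to exploit the fact that a rational cyclic $25$-isogeny forces the $5$-adic image to be as large as possible relative to that isogeny, so that the degree of $x$ is governed entirely by level-$25$ data, and then to peel $x$ down to a point on a genus-$0$ intermediate modular curve, where non-isolation is automatic. First I would record the image. Since $E$ is non-CM with a rational cyclic $25$-isogeny but (by Theorem \ref{IsogClassification}) no rational cyclic $125$-isogeny, Theorem \ref{isogenyTHM}(2) together with the Remark following it gives $\im \rho_{E,5^\infty} = \pi^{-1}(\im \rho_{E,25})$; that is, the $5$-adic representation has level $25$ and its image is the full preimage of a subgroup $G_{25} \subseteq \GL_2(\Z/25\Z)$ of the Borel reflecting exactly the rational cyclic $25$-isogeny (and, as one must check, no further independent $5$-power isogeny). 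I would also note $a \le 1$: a rational cyclic $2$-isogeny would combine with the $25$-isogeny to give a rational cyclic $50$-isogeny, contradicting Theorem \ref{IsogClassification}, and $E$ cannot correspond to a rational point on the curve X20, since the fiber product of X20 and $X_0(5)$ contains only cusps \cite[Proposition 6(k)]{DanielsGJ20}; hence by the $2$-adic classification \cite{RouseDZB} a point of order $4$ on $E$ occurs in even degree, forcing $a \le 1$.

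The cases $b \le 1$ are immediate: then $X_1(2^a 5^b)$ is one of $X_1(1), X_1(2), X_1(4), X_1(8), X_1(5), X_1(10)$, all of genus $0$, and a genus-$0$ curve with a rational cusp carries no isolated points. So I would focus on $b \ge 2$. Here the full-preimage description yields clean degrees: for the projection $f\colon X_1(2^a 5^b) \to X_1(2^a \cdot 25)$ of Proposition \ref{prop:Degree} one computes $\deg(f) = 5^{2(b-2)}$, a pure power of $5$, and since $\im \rho_{E,5^\infty}$ contains the entire kernel of reduction $\GL_2(\Z_5) \to \GL_2(\Z/25\Z)$, the order-$5^b$ points lying above a fixed order-$25$ point form a single Galois orbit of the full size $5^{2(b-2)}$. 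This gives the degree-multiplicativity hypothesis $\deg(x) = \deg(f(x))\cdot \deg(f)$ of Theorem \ref{LevelLowering}, so by the contrapositive of that theorem it suffices to treat $f(x)$, reducing to the case $b = 2$: a point $x' = [E,P] \in X_1(2^a \cdot 25)$ of odd degree with $P$ of order $2^a \cdot 25$.

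For $b = 2$ the curve $X_1(2^a\cdot 25)$ has positive genus, so instead I would map $x'$ to an intermediate modular curve $X_\Delta(2^a \cdot 25)$ with $\{\pm 1\} \subseteq \Delta$, whose genus is controlled by the work of Jeon, Kim, and Schweizer \cite{JeonKim}, \cite{JKS20}. The odd-degree hypothesis is the crucial input: writing $\deg(x')$ as the size of the $\im\rho_{E,2^a\cdot 25}$-orbit of $\{\pm P\}$, oddness forces the diamond action on the stable $25$-part to have odd image, which is exactly what allows $\Delta$ to be enlarged so that the degree $[\Delta : \{\pm 1\}] = \deg(X_1 \to X_\Delta)$ is odd while degrees are preserved, i.e. $\deg(x') = \deg(g(x'))\cdot\deg(g)$ for the projection $g$ to $X_\Delta$. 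Choosing $\Delta$ maximal subject to this places $X_\Delta$ in the genus-$0$ range of the Jeon--Kim--Schweizer classification; since a genus-$0$ curve with a rational point has no isolated points, $g(x')$ is non-isolated, and a final application of the contrapositive of Theorem \ref{LevelLowering} shows that $x'$, and therefore $x$, is non-isolated.

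The hard part will be the $b = 2$ step: making the choice of $\Delta$ explicit and proving the degree identity $\deg(x') = \deg(g(x'))\cdot\deg(g)$. This requires pinning down $\im \rho_{E,25}$ precisely from Greenberg's refined results---in particular ruling out two independent $5$-isogenies, so that the diamond character on the stable $25$-subgroup has the controlled image the degree count needs---and verifying via \cite{JeonKim}, \cite{JKS20} that the resulting intermediate curve really does have genus $0$. A secondary technical point is controlling possible entanglement between the $2$-adic and $5$-adic representations when $a = 1$, which could otherwise break the degree-multiplicativity used to isolate the prime-to-$5$ part of the level; I expect this to be handled by the explicit image data for the relevant $2$-adic groups in \cite{RouseDZB} together with the absence of a rational $2$-isogeny.
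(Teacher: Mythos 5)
Your overall skeleton matches the paper's: use Theorem \ref{isogenyTHM} to see that $\im\rho_{E,5^\infty}$ is the full preimage of $\im\rho_{E,25}$, push $x$ down to level $2^a\cdot 25$ by degree-multiplicativity and Theorem \ref{LevelLowering}, and then escape to a genus-$0$ intermediate modular curve $X_{\Delta}(25)$. But the step you defer as ``the hard part'' is exactly where the real content lies, and the sketch you give for it does not close the gap. The hypothesis of the proposition is only that \emph{some} $y\in X_0(25)(\Q)$ has $j(y)=j(x)$, i.e.\ that $E$ admits \emph{a} rational cyclic $25$-subgroup $\langle Q\rangle$; it does not say that the subgroup $\langle P\rangle$ generated by the distinguished point of $x$ is $\Q$-rational. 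Without that, the image of $[E,P]$ on $X_{\Delta_2}(25)$ (where $\Delta_2=\{\pm1,\pm4,\pm6,\pm9,\pm11\}$, so $\deg(X_1(25)\to X_{\Delta_2}(25))=5$ and $\deg(X_{\Delta_2}(25)\to X_0(25))=2$) could itself have degree $5$, the identity $\deg(f(x))=\deg(g)\cdot\deg(g(f(x)))$ would fail, and Theorem \ref{LevelLowering} would give you nothing. Your appeal to ``oddness of the diamond action'' only controls the $2$-part of the index $[\Q(f(x)):\Q(\text{image on }X_{\Delta_2})]$; it cannot rule out an index-$1$ (rather than index-$5$) drop.

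The paper closes this gap in two stages that are absent from your proposal. First it pins down $\deg(f(x))=5$ exactly: the degree is a power of $5$ by a field-tower argument, is not $1$ by Mazur, and is not $25$ or $125$ because $X_1(25)$ has genus $12$, so such points have Riemann--Roch spaces of dimension $\ge 14$ and are not $\mathbb{P}^1$-isolated. Then it proves $[E,\langle P\rangle]\in X_0(25)(\Q)$ by contradiction: if $\langle P\rangle\cap\langle Q\rangle$ has order $5$, then $[E,\langle P\rangle]$ and $[E,\langle Q\rangle]$ both lie in the support of $\phi^*([E,\langle 5P\rangle])$ for $\phi:X_0(25)\to X_0(5)$ of degree $5$, giving $\deg(\phi^*)\ge 6>5$; if the intersection is trivial, then $\Q(E[25])$ has degree at most $200$, forcing $5^2\mid[\GL_2(\Z/25\Z):\im\rho_{E,25}]$, contradicting Greenberg's Theorem 2. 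You would need to supply both of these arguments (or substitutes) before the intermediate-curve step can run. A secondary omission: for $a=1$ you propose to work with intermediate curves of level $50$ directly, whereas the paper first descends $X_1(50)\to X_1(25)$ by showing the degree ratio there is exactly $3$ (since a drop would force a rational $2$-isogeny and hence a forbidden $50$-isogeny); if you stay at level $50$ you must verify genus-$0$-ness and the degree identity for the relevant $X_\Delta(50)$, which you have not done.
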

\begin{proof} Suppose by way of contradiction that $x$ is isolated, and fix a model for $E/\Q$. If $b=0$, then $a \leq 3$ by Theorem \ref{OddDegThm} and $X_1(2^a)$ has genus 0 (and thus $x$ is not isolated). So we may assume $b>0$. Then $a \leq 1$ by Proposition \ref{OddDegProp}. Suppose first that $a = 0$. Since $X_1(5)$ has genus 0 (and hence has no isolated points), we may assume $b>1$. Let $f: X_1(5^b) \rightarrow X_1(25)$ be the natural map. By Theorem \ref{isogenyTHM},  $\im\rho_{E,5^\infty} =  \pi^{-1}(\im\rho_{E,5^2})$ and so $\deg(x) = \deg(f)\deg(f(x))$. By Theorem \ref{LevelLowering}, since $x$ is isolated, $f(x) \in X_1(25)$ is also isolated. We claim that $\deg(f(x)) = 5^r$ for some $r \in \Z^{+}$.
\par Let $f(x) = [E,P] \in X_1(25)$. Since $f(x)$ has odd degree, the classification of images of mod 5 Galois representations  {(see, for example, Tables 1 in \cite{GJN})} shows it corresponds to a point of degree 1 or 5 on $X_1(5)$. Let $y \in X_1(5)$ be the point corresponding to $f(x)$ and consider the tower of fields $\Q \subseteq \Q(y) \subseteq \Q(5P) \subseteq \Q(P)$. By Lemma \ref{save2}, $[\Q(5P): \Q(y)] \le 2$. By {Proposition 4.6 in \cite{GJN}},  $[\Q(P):\Q(5P)]$ divides either $5^2$ or $4\cdot 5$. Thus $[\Q(P): \Q]$ divides $8\cdot 125$. As $\Q(f(x)) \subseteq \Q(P)$,  $\deg(f(x)) = 5^k$ for some $k \le 3$ (since by assumption $\deg(f(x))$ is odd).  By Mazur's result on torsion points over $\Q$ \cite{mazur77}, $\deg(f(x)) \neq 1$. If the degree is $5^2$ or $5^3$, then the dimension of the Riemann-Roch space $L(f(x))$ is at least 14 {since the genus of $X_1(25)$ is 12. It follows that $f(x)$ is not $\mathbb{P}^1$-isolated.} Thus we must have $\deg(f(x)) = 5$.
\par We will next show $[E, \langle P \rangle] \in X_0(25)(\Q)$. Suppose not. Then {since the residue field of this point is a subfield of $\Q(f(x))$}, the degree of $[E, \langle P \rangle]$ must be 5. By assumption, $E$ corresponds to a rational point  on $X_0(25)$, so there must be a point $Q \in E$ of order 25 such that $[E, \langle Q \rangle] \in X_0(25){(\Q)}$. As $Q$ and $P$ both have order $25$, the group $G \coloneqq \langle Q, P \rangle$ is isomorphic to one of $\Z/25\Z \times \Z/5^s\Z$, $s=0, 1$, or $2$. We consider cases according to $\langle Q \rangle \cap \langle P \rangle$. 
If $\langle Q \rangle$ and $\langle P \rangle$ have nontrivial intersection, then either $\langle Q \rangle \cap \langle P \rangle = \langle Q \rangle$ in which case, $[E, \langle P \rangle]$ is $\Q$-rational, contradicting our assumption, or $\vert \langle Q \rangle \cap \langle P \rangle\vert = 5$. As $\langle Q \rangle$ and $ \langle P \rangle$ are each cyclic of order 25, they contain a unique subgroup of order 5, and thus $\langle 5Q \rangle =\langle Q \rangle \cap \langle P \rangle =  \langle 5P \rangle$. {Since $[E, \langle 5Q \rangle] \in X_0(5)(\Q)$, the group $\langle 5P \rangle$ is $\Q$-rational.} Let $\phi: X_0(25) \rightarrow X_0(5)$ be the natural map (note that $\phi$ has degree 5). Then $[E, \langle P \rangle]$ and $[E, \langle Q \rangle]$ are in the support of $\phi^*([E, \langle 5P \rangle])$, which means $\deg(\phi^*([E, \langle 5P \rangle]) \ge 1 + 5$. Since $\deg(\phi^*(y)) = \deg(\phi)\deg(y) = 5\cdot 1$ for any closed point $y \in {X_0(5)}(\Q)$, we have reached a contradiction. 
\par If $\langle Q \rangle$ and $\langle P \rangle$ have trivial intersection, then $G$ is isomorphic to $\Z/25\Z \times \Z/25\Z$. Since {the isogeny character associated to a cyclic subgroup of order $N$ can be trivialized over an extension of degree dividing $\varphi(N)$, we have $[\Q(Q): \Q] \vert \varphi(25)$. Moreover, $[\Q(P):\Q] \vert 10$ since by assumption, $[E, P]$ has degree 5 and $P$ requires at most a degree 2 extension of this degree 5 extension. Thus $F = \Q(Q,P) = \Q(E[25])$ has degree at most 200.} Since $5^4 \nmid [F:\Q]$,
\\[.5\baselineskip] \centerline{$\displaystyle \left[\mbox{GL}_2(\Z/25\Z):\im \rho_{E, 25}\right] = \frac{5^5\cdot3\cdot2^5}{[F:\Q]}$} $\;$
\\ 
is divisible by $5^2$. {This contradicts} Theorem 2 in \cite{greenberg2012}. Thus we may assume $[E, \langle P \rangle] \in X_0(25)(\Q)$.
\par Consider the map 
\\[.5\baselineskip] \centerline{$X_1(25) \rightarrow  X_{\Delta_2}(25) \rightarrow X_0(25)$,} $\;$
\\ where $X_{\Delta_2}(25)$ denotes the intermediate modular curve associated to $\Delta_2 = \{\pm1, \pm 4, \pm 6, \pm 9, \pm11\}$. {See \cite{JeonKim}, \cite{JKS20} for more on intermediate modular curves including the degrees of natural maps and genus information.} Since $\deg(X_{\Delta_2}(25) \rightarrow X_0(25)) = 2$, $[E, {P} ]$ must correspond to a degree 1 point under the map $X_1(25) \rightarrow X_{\Delta_2}(25)$  (else $\deg(f(x))$ and thus $\deg(x)$ is even).  Since this map $X_1(25) \rightarrow X_{\Delta_2}(25)$ is of degree 5, {Theorem \ref{LevelLowering}} implies the image of $f(x)$ is isolated on $X_{\Delta_2}(25)$; as this curve has genus 0, this is impossible.
\par  {Next, suppose $a=1$. So $x=[E,P] \in X_1(2\cdot 5^b)$ is isolated. Since $X_1(10)$ has genus 0, we may assume $b>1$.} Let $g: X_1(2\cdot 5^b) \rightarrow X_1(50)$ denote the natural map. We will first show that $\deg(x) = \deg(g) \deg(g(x))$. By {Theorem \ref{isogenyTHM}}, $x$ maps to a point $y = [E, 2P] \in X_1(5^b)$ such that $\deg(y) = \deg(f)\deg(f(y))$ where $f$ is the natural map $f: X_1(5^b) \rightarrow X_1(25)$. By Proposition \ref{prop:Degree}, we have that $5^{2b-4}$ divides $[\Q(x):\Q(f(y))] = [\Q(x): \Q(h(g(x)))]$, where $h: X_1(50)\rightarrow X_1(25)$. Since $\deg(h)= 3$ (again, by Proposition \ref{prop:Degree}), it follows that $5^{2b-4}$ divides $[\Q(x):\Q(g(x))]$, and as $[\Q(x): \Q(g(x))] \le \deg(g) = 5^{2b-4}$, it follows that $[\Q(x): \Q(g(x))] = \deg(g)$ and $\deg(x) = \deg(g)\deg(g(x))$. Thus $g(x) \in X_1(50)$ is isolated by Theorem \ref{LevelLowering}.  Next, by the assumption that $\deg(x)$ {is odd} we have that $\deg(g(x))$ is odd.  Then, since $[\Q(g(x)): \Q(h(g(x)))] \le \deg (h) = 3$, either $\deg(g(x)) = \deg(h(g(x))$ or $\deg(g(x)) = 3\cdot \deg(h(g(x)) = \deg(h)\deg(h(g(x)))$. We will show that $\deg(g(x)) = \deg(h)\deg(h(g(x)))$. Suppose by way of contradiction that $\deg(g(x)) = \deg(h(g(x)))$. By the argument given above, $\deg(h(g(x))) = 5^k$ for some $k \in \Z^+$.  This implies $E$ corresponds to a point on $X_1(2)$ of degree dividing $5^k$. Since $\deg(X_1(2) \rightarrow X_1(1))=3$, it follows that $E$ has a 2-isogeny over $\Q$. By assumption $E$ has a $25$-isogeny over $\Q$, {so this implies} $E$ has a $50$-isogeny over $\Q$, {contradicting Theorem \ref{IsogClassification}}. Thus $\deg(g(x)) = \deg(h)\deg(h(g(x)))$. By Theorem \ref{LevelLowering}, $h(g(x))$ is an isolated point on $X_1(25)$, but as shown above,  there are no odd degree isolated points on $X_1(5^b)$ for any $b \in \Z^+$.
\end{proof}

\subsection{Elliptic curves with a rational cyclic 21-isogeny} In Proposition \ref{Prop:3j}, we show that there are no isolated points of odd degree on $X_1(n)$ corresponding to elliptic curves with $j$-invariant $3^3\cdot5^3/2, \,-3^2\cdot 5^3 \cdot 101^3/2^{21},$ or $-3^3\cdot 5^3 \cdot 383^3/2^7$. This relies on the following lemma, where we give improved bounds on the level of $\rho_{E,14 \cdot 3^{\infty}}$ using the approach of Prop. 6.1 in \cite{BELOV}. 

\begin{lem} \label{extraLem1}
Let $E/\Q$ be an elliptic curve with LMFDB label 162.c1, 162.c2, or 162.c4. Then 
$
\im \rho_{E,14 \cdot 3^{\infty}} =\pi^{-1}(\im \rho_{E,14 \cdot 3^2})$ and
$\im \rho_{E,7 \cdot 3^{\infty}} =\pi^{-1}(\im \rho_{E,7 \cdot 3^2})$.

\end{lem}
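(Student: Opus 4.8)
The plan is to reduce both asserted equalities to a single statement about the $3$-adic part of the joint image and then to bound the level of entanglement between the mod-$14$ and $3$-adic representations. Since the $2$- and $7$-parts enter only at level $2$ and $7$, the kernel of the reduction map $\pi\colon \GL_2(\Z/14\Z)\times\GL_2(\Z_3)\to\GL_2(\Z/14\Z)\times\GL_2(\Z/9\Z)$ is exactly $\{I\}\times\bigl(I_2+9M_2(\Z_3)\bigr)$, so the equality $\im\rho_{E,14\cdot 3^\infty}=\pi^{-1}(\im\rho_{E,14\cdot 3^2})$ is equivalent to the single inclusion
\[
\{I\}\times\bigl(I_2+9M_2(\Z_3)\bigr)\subseteq \im\rho_{E,14\cdot 3^\infty}.
\]
Projecting away the mod-$2$ coordinate, the corresponding inclusion for $7\cdot 3^\infty$ follows immediately from the one for $14\cdot 3^\infty$, since the kernel of the analogous reduction map is $\{I\}\times\bigl(I_2+9M_2(\Z_3)\bigr)$ inside $\GL_2(\Z/7\Z)\times\GL_2(\Z_3)$. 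Hence it suffices to treat $\rho_{E,14\cdot 3^\infty}$.

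I would establish this in two steps. First, I would pin down $\im\rho_{E,3^\infty}$ for each of the three curves in the statement from the classification of $3$-adic images in \cite{3adicimages} and verify that it contains $I_2+9M_2(\Z_3)$, so that the $3$-adic representation by itself has level dividing $9$. Second, to control the joint representation I would invoke Goursat's lemma: writing $A=\im\rho_{E,14}$ and $B=\im\rho_{E,3^\infty}$, the joint image $\im\rho_{E,14\cdot 3^\infty}\subseteq A\times B$ is the fiber product of $A$ and $B$ over a common quotient $Q$, realized by surjections $\alpha\colon A\to Q$ and $\beta\colon B\to Q$. The displayed inclusion holds precisely when $\beta$ kills $I_2+9M_2(\Z_3)$, i.e.\ when $\beta$ factors through $B/\bigl(I_2+9M_2(\Z_3)\bigr)=\im\rho_{E,9}$. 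In field terms this says that the entanglement field satisfies $\Q(E[14])\cap\Q(E[27])=\Q(E[14])\cap\Q(E[9])$, so that no new entanglement appears in the $3$-power layer $\Q(E[27])/\Q(E[9])$.

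The crux is therefore to bound the $3$-power level of the entanglement by $9$, and this is where the approach of \cite[Prop.\ 6.1]{BELOV} enters, sharpened by the known entanglement of these specific curves. The group $\Gal\bigl(\Q(E[27])/\Q(E[9])\bigr)$ is a nontrivial submodule of the adjoint module $\bigl(I_2+9M_2(\Z_3)\bigr)/\bigl(I_2+27M_2(\Z_3)\bigr)\cong M_2(\F_3)$, on which $\im\rho_{E,9}$ acts by conjugation; a common quotient $Q$ meeting this layer would force a degree-$3$ subextension shared between $\Q(E[27])/\Q(E[9])$ and $\Q(E[14])$. I would constrain this using ramification: since $E$ has conductor $162=2\cdot 3^4$, the field $\Q(E[14])$ is ramified only at $2,3,7$ while $\Q(E[27])/\Q(E[9])$ is ramified only at $2,3$, so any common subextension is ramified only at $2$ and $3$; comparing the $\Gal_\Q$-module structure of the adjoint layer with the $3$-power quotients actually present in $\im\rho_{E,14}$ should then show that every such quotient is already visible modulo $9$. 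Because these curves carry rational $3$- and $7$-isogenies, the $3$-part of $\im\rho_{E,14}$ arises from the corresponding Borel isogeny characters mod $3$ and mod $7$, and this is exactly the ``known entanglement'': it accounts for all degree-$3$ and degree-$9$ abelian subextensions and leaves none free to pair with the $27$-layer. Equivalently, one can simply compute $\im\rho_{E,14\cdot 27}$ for the three explicit curves and check directly that it equals $\pi^{-1}(\im\rho_{E,14\cdot 9})$, after using the standard fact that one-step stabilization of the level filtration of a pro-$3$ subgroup of $\GL_2(\Z_3)$ at level at least $9$ propagates to all higher levels.

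The main obstacle is precisely the exceptional behavior at $p=3$: unlike the cases $p\ge 5$ handled by perfectness of $\operatorname{SL}_2$, both the $3$-power level filtration and the Borel images mod $3$ and mod $7$ contribute genuine $3$-power abelian quotients, so one cannot dismiss $3$-power entanglement on group-theoretic grounds alone. The work lies in showing that all of this entanglement is confined to level $9$, which requires combining the precise $3$-adic image from \cite{3adicimages}, the ramification constraints on the torsion fields, and the explicit $3$--$7$ entanglement of these curves — this last being the ingredient that improves the general bound of \cite[Prop.\ 6.1]{BELOV}.
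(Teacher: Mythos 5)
Your reduction is sound: via Goursat the two equalities do come down to showing that the entanglement field $\Q(E[14])\cap\Q(E[3^\infty])$ is already contained in $\Q(E[9])$ (equivalently, that the descending chain of kernels $L_s=\ker(\im\rho_{E,14\cdot3^s}\to\im\rho_{E,3^s})$ stabilizes by $s=2$), and the $7\cdot3^\infty$ statement does follow from the $14\cdot 3^\infty$ one by projection, exactly as in the paper. The gap is that you never actually prove the crux. Your ramification constraint cannot do the job: $\Q(\zeta_9)^+$ is ramified only at $3$ and is itself a common subfield of $\Q(E[7])$ and $\Q(E[9])$ for these curves (the paper verifies this with Magma), so ``ramified only at $2$ and $3$'' does not exclude degree-$3$ common subextensions. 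The assertion that the Borel isogeny characters mod $3$ and mod $7$ ``account for all degree-$3$ and degree-$9$ abelian subextensions and leave none free'' is precisely what needs proof, and it is not even the right accounting as stated: the new piece $\Gal(F/F\cap\Q(E[9]))$ with $F=\Q(E[14])\cap\Q(E[27])$ is a $3$-group but need not be abelian over $\Q$, and the extension to be excluded lives over $F\cap\Q(E[9])$ rather than over $\Q$, so counting abelian subextensions of $\Q(E[14])/\Q$ does not directly bound it.

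The paper closes this gap with a concrete counting argument that is absent from your proposal. It exhibits the degree-$9$ field $\Q(\zeta_7)^+\Q(\alpha_i)\subseteq\Q(E[14])$ (where $\alpha_i$ is the $x$-coordinate of a $2$-torsion point) and shows it is linearly disjoint from $\Q(E[3^s])$ for \emph{every} $s$; this in turn requires first proving $\Q(E[2])\cap\Q(E[3^s])=\Q$ for all $s$ by running the same Goursat/stabilization machinery on $\rho_{E,2\cdot3^\infty}$, using the computed facts $[\Q(E[2]):\Q]=6$, $\Q(E[2])\cap\Q(E[9])=\Q$, and $[\Q(E[9]):\Q(E[3])]=3^4$. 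This yields $9\mid\#L_s$ for all $s$, while $\ord_3(\#L_1)\le\ord_3(\#\GL_2(\Z/14\Z))=3$, so the chain $L_1\supsetneq L_2\supsetneq\cdots$ can have at most one proper containment and $L_2=L_3=\cdots$; then \cite[Prop.\ 3.5]{BELOV} propagates the resulting one-step fullness to all higher levels, as you anticipated. Your fallback of ``simply computing $\im\rho_{E,14\cdot27}$ directly'' would in principle suffice, but you neither carry it out nor address its feasibility given the size of $\Q(E[378])$, so as written the proposal does not contain a proof of the key step.
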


\begin{proof}
Let $E/\Q$ be one of the curves listed above. Magma confirms $\Q(\zeta_9)^+$ is a subfield of one of the points on $X_1(7)$ associated to $E$, so
$
\Q(\zeta_9)^+ \subseteq \Q(E[7]) \cap \Q(E[9])
$
{by the Weil pairing}.
 Following the proof of \cite{BELOV}, Prop. 6.1, for all $s \in \Z^+$ we let 
 \begin{align*}
 L_s & \coloneqq \ker(\im \rho_{E,14\cdot 3^s} \rightarrow \im \rho_{E,3^s}),\\
 K_s & \coloneqq \ker(\im \rho_{E,14 \cdot3^s} \rightarrow \im \rho_{E,14}),\\
 K & \coloneqq \ker(\im \rho_{E,14 \cdot 3^{\infty}} \rightarrow \im \rho_{E,14}).
 \end{align*}
We may view $L_s$ as a subgroup of $\im \rho_{E,14}$ and $K_s$ as a subgroup of $\im \rho_{E,3^s}$. 
Moreover, we have the following diagram, where the vertical isomorphisms follow from Goursat's Lemma.
        \begin{equation*}\label{eq:diag2}
        \xymatrix{
            \im \rho_{E, 3^{s}}/K_{s} \ar@{->>}[r] \ar[d]^{\cong} &  \im \rho_{E, 3}/K_{1} \ar[d]^{\cong} \\
            \im \rho_{E, 14} / L_{s} \ar@{->>}[r]
             & \im \rho_{E, 14} / L_{1}}
        \end{equation*}
        The kernel of the top map is a power of 3, and so the kernel of the bottom map is as well. Thus $[L_1:L_{s}]$ is a power of 3, and more generally $[L_{s_1}:L_{s_2}]$ is a power of 3 for all $ 1 \leq s_1 \leq s_2$.

        We will show the maximal chain of proper containments $L_1 \supsetneq L_2 \supsetneq \dots \supsetneq L_r$ has length $r=2$. Magma confirms {$[\Q(E[2]):\Q]=6$}, and moreover that {$\Q(E[2]) \cap \Q(E[9])=\Q$}. As above, we let
 \begin{align*}
 L_s' & \coloneqq \ker(\im \rho_{E,2\cdot 3^s} \rightarrow \im \rho_{E,3^s}),\\
 K_s' & \coloneqq \ker(\im \rho_{E,2 \cdot3^s} \rightarrow \im \rho_{E,2}),\\
 K' & \coloneqq \ker(\im \rho_{E, 2 \cdot 3^{\infty}} \rightarrow \im \rho_{E,2}).
 \end{align*}
As before, $[L_{s_1}':L_{s_2}']$ is a power of 3 for all $1 \leq s_1 \leq s_2$. {Since $\Q(E[2]) \cap \Q(E[9])=\Q$, we have $\#L_2'=\#L_1'=6$. It follows that $L_1'=L_2'$, which gives the following diagram.} 
        \begin{equation*}
        \xymatrix{
            \im \rho_{E, 3^{2}}/K_{2}' \ar@{->>}[r] \ar[d]^{\cong} &  \im \rho_{E, 3}/K_{1}' \ar[d]^{\cong} \\
            \im \rho_{E, 2} / L_{2}' \ar@{=}[r]
             & \im \rho_{E, 2} / L_{1}'}
        \end{equation*}
        
{Since Magma shows $[\Q(E[9]):\Q(E[3])]=3^4$, it follows that $ \im \rho_{E, 3^{2}}=\pi^{-1}( \im \rho_{E, 3})$. Thus} the diagram implies 
\[
\ker(K' \text{ mod }{3^2} \rightarrow K' \text{ mod }{3})=I+\text{M}_2(3\Z/3^2\Z).
\]
By Prop. 3.5 of \cite{BELOV},
\[
\ker(K' \rightarrow K' \text{ mod }{3})=I+3 \text{M}_2(\Z_3),
\]
and so $\im \rho_{E,2\cdot 3^{\infty}} = \pi^{-1}(\im \rho_{E,2\cdot3})$. This means {$\Q(E[2]) \cap\Q(E[3^s])=\Q$ for all $s$.} 

{Let $(\alpha_i,0)$ for $1 \leq i \leq 3$ be the points on $E$ of order 2. Magma confirms} 
$
\Q(\zeta_7)^+\Q(\alpha_i)
$
has degree 9 over $\Q$, and the only subfields of degree 3 are $\Q(\zeta_7)^+$ and $\Q(\alpha_i)$. Neither $\Q(\alpha_i)$ nor $\Q(\zeta_7)^+$ is a subfield of $\Q(E[3^s])$ since $E$ has good reduction at 7, so their compositum is linearly disjoint from $\Q(E[3^s])$. Thus $3^2$ divides the size of $L_s$ for all $s$. {Since $\ord_3(\#L_1) \leq \ord_3(\# \GL_2(\Z/14\Z))=3$ and $[L_{s_1}:L_{s_2}]$ is a power of 3 for all $ 1 \leq s_1 \leq s_2$,} we have $r \leq 2$.

Thus the maximal chain of proper containments $L_1 \supsetneq L_2 \supsetneq \dots \supsetneq L_r$ has length $r=2$. In particular, we have the following diagram.
        \begin{equation*}\label{eq:diag2}
        \xymatrix{
            \im \rho_{E, 3^{3}}/K_3 \ar@{->>}[r] \ar[d]^{\cong} &  \im \rho_{E, 3^2}/K_2\ar[d]^{\cong} \\
            \im \rho_{E, 14} / L_3 \ar@{=}[r]
             & \im \rho_{E, 14} / L_2}
        \end{equation*}
    As above, this implies    
    \[
\ker(K \text{ mod }{3^3} \rightarrow K \text{ mod }{3^2})=I+\text{M}_2(3^2\Z/3^3\Z).
\]
Prop. 3.5 of \cite{BELOV} now gives that
\[
\ker(K \rightarrow K \text{ mod }{3^2})=I+3^2 \text{M}_2(\Z_3),
\]
and so
$
\im \rho_{E,14 \cdot 3^{\infty}} =\pi^{-1}(\im \rho_{E,14 \cdot 3^2})$. The second claim follows immediately.
\end{proof}

\begin{prop} \label{Prop:3j}
{If $x \in X_1(2^a3^b7^c)$ is a point of odd degree with $b,c>0$} and $j(x) \in \{3^3\cdot5^3/2,-3^2\cdot 5^3 \cdot 101^3/2^{21},-3^3\cdot 5^3 \cdot 383^3/2^7\}$, then $x$ is not isolated.
\end{prop}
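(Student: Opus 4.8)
The plan is to mirror the strategy of Proposition \ref{25lem}: assuming $x=[E,P]$ is isolated, I would use the precise control on $\im\rho_{E,7\cdot 3^\infty}$ afforded by Lemma \ref{extraLem1} to lower the level via Theorem \ref{LevelLowering} until $x$ maps, with multiplicative degree, to a point on an explicit low-level curve, where non-isolation can be checked directly. First I would fix a model $E/\Q$ with LMFDB label 162.c1, 162.c2, or 162.c4 according to the given $j$-invariant, and suppose for contradiction that $x$ is isolated. Since $E$ has a rational cyclic $21$-isogeny, Proposition \ref{prop:ExtraJ}(ii) gives $n=2^a3^b7^c$ with $a\le 1$, and by hypothesis $b,c>0$.

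Next I would carry out the level-lowering. Because $E$ has no rational cyclic $49$-isogeny (Theorem \ref{IsogClassification}) and $\im\rho_{E,7^\infty}=\pi^{-1}(\im\rho_{E,7})$ by the remark following Theorem \ref{isogenyTHM}, the projection that reduces the $7$-part of $P$ to order $7$ should preserve the equality $\deg(x)=\deg(f)\deg(f(x))$; then Lemma \ref{extraLem1} gives $\im\rho_{E,7\cdot 3^\infty}=\pi^{-1}(\im\rho_{E,7\cdot 9})$, so the $3$-part may be reduced to order at most $3^2$ with multiplicative degree as well. By Theorem \ref{LevelLowering}, the resulting image $x'$ on $X_1(2^a3^{b'}7)$ with $a\in\{0,1\}$ and $b'\in\{1,2\}$ is again isolated of odd degree. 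The delicate point here is that the two reductions must be combined jointly: I would reduce the $7$-part first and then apply Lemma \ref{extraLem1} verbatim to the $3$-part, with the mod-$2$ and mod-$7$ data as spectators (exactly the situation the lemma controls, since it records the full $14\cdot 3^\infty$ image), checking via Lemma \ref{save2} that the $c_x=\tfrac12$ ambiguity does not spoil the degree bookkeeping.

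Finally, I would reduce to the short explicit list $X_1(21),X_1(42),X_1(63),X_1(126)$ and show on each that the odd-degree points attached to these three $j$-invariants are not isolated, contradicting the previous step. Concretely, for each curve and each $j$-invariant I would locate the closed point $x'$ using the Weber-function description of its residue field, and then either exhibit a degree-$\deg(x')$ function by computing that the Riemann--Roch space $L(x')$ has dimension at least $2$ (so $x'$ is not $\mathbb{P}^1$-isolated), or exhibit a positive-rank abelian subvariety $A$ with $\Phi(x')+A\subseteq\im\Phi$ (so $x'$ is not AV-isolated).

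The main obstacle I anticipate is precisely this last step. A crude genus or gonality bound will not suffice, because $X_1(21)$ does carry a genuinely isolated degree-$3$ point, namely Najman's point above $-3^2\cdot 5^6/2^3$; what distinguishes the present three $j$-invariants (the curves 162.c1, 162.c2, 162.c4) from that one (the remaining curve 162.c3) is the finer entanglement structure recorded in Lemma \ref{extraLem1}. Thus the non-isolation must be established by an actual computation of the Riemann--Roch space---or of the relevant image in $\operatorname{Jac}$---for the specific closed points, and care is needed to ensure that the reductions of the second paragraph genuinely land these points on curves where such a computation is feasible.
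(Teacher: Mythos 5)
Your overall route is the paper's: lower the level using the $3$-adic and $7$-adic level computations together with Lemma \ref{extraLem1} and Theorem \ref{LevelLowering}, landing an isolated odd-degree point on $X_1(2^a3^d7)$ with $a\le 1$, $d\le 2$, and then derive a contradiction there. But two steps do not close as written. The smaller one: reducing the $7$-part of $P$ from order $7^c$ to order $7$ is not justified by the $7$-adic level alone, since $\Q(E[7^c])$ could a priori be entangled with $\Q(E[2^a3^b])$. The paper gets the joint statement $\im\rho_{E,42^\infty}=\pi^{-1}(\im\rho_{E,2^{\alpha}3^{\beta}7})$ with $\beta\le 4$ from Proposition 6.1 of \cite{BELOV} and degree multiplicativity from Proposition 5.8 of \cite{BELOV}, and only afterwards uses Lemma \ref{extraLem1} to improve $\beta$ to $2$. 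Lemma \ref{extraLem1} concerns $\rho_{E,14\cdot 3^{\infty}}$ and says nothing about $7^2$, so it cannot carry out the $7$-part reduction for you in the order you propose.

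The more serious issue is the endgame, which you leave open and misdiagnose. You assert that a crude genus bound cannot work on $X_1(21)$ because of Najman's isolated degree-$3$ point, and you therefore plan bespoke Riemann--Roch or Jacobian computations for each closed point. In fact the crude bound is exactly what the paper uses: factoring division polynomials shows that for $162.c1$, $162.c2$, $162.c4$ the odd-degree points on $X_1(21)$ have degree $63$, $21$, $9$ respectively, and on $X_1(42)$ degree $189$, $63$, $27$; the degrees on $X_1(63)$ and $X_1(126)$ are exactly $9$ times these, so Theorem \ref{LevelLowering} reduces those two cases back to $X_1(21)$ and $X_1(42)$. Since every remaining degree exceeds the genus ($5$ for $X_1(21)$, $25$ for $X_1(42)$), Riemann--Roch gives $\dim L(D)\ge \deg D-g+1\ge 2$, so the points are not $\mathbb{P}^1$-isolated and no divisor-specific computation is needed. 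What distinguishes these three curves from $162.c3$ is not the entanglement recorded in Lemma \ref{extraLem1} but simply that their smallest odd-degree points on $X_1(21)$ have degree $9$, $21$, $63$ rather than $3$ --- a fact read off from the $21$st division polynomial. Without that degree computation your argument does not terminate.
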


\begin{proof}
Let {$x=[E,P] \in X_1(2^a3^b7^c)$} be {such} an isolated point, and choose the model of $E/\Q$ labeled 162.c1, 162.c2, or 162.c4. Note $E$ has a rational $3$-isogeny and a rational $7$-isogeny. From the classification in \cite{3adicimages}, we see that $\rho_{E,3^{\infty}}$ has level 3, and by Theorem \ref{isogenyTHM}, $\rho_{E,7^{\infty}}$ has level $7$.  Then by Proposition 6.1 in \cite{BELOV},
\[
\im \rho_{E, 42^{\infty}} = \pi^{-1}(\im \rho_{E, 2^{\alpha}3^{\beta}7}).
\]
where $\beta \leq 4$. Let $g: X_1(2^a3^b7^c) \rightarrow X_1(\gcd(2^a3^b7^c, 2^{\alpha}3^{\beta}7))$ be the natural map. By Proposition 5.8 in \cite{BELOV}, 
\[
\deg(x)=\deg(g)\deg(g(x)),
\]
and so $x$ maps to an isolated point on $X_1(\gcd(2^a3^b7^c, 2^{\alpha}3^{\beta}7))$ by Theorem \ref{LevelLowering}. We have $a \leq 1$ {by Proposition \ref{prop:ExtraJ}, and since we have assumed $b,c>0$, the possibilities for $\gcd(2^a3^b7^c, 2^{\alpha}3^{\beta}7)$ are} $3^d\cdot 7$ or $2\cdot 3^d \cdot 7$ for $1 \leq d \leq 4$. By Lemma  \ref{extraLem1} and Theorem \ref{LevelLowering}, we may further assume $d\leq 2$. We consider each $j$-invariant separately:
\begin{itemize}
\item Suppose $E=162.c1$. Factoring division polynomials shows that any odd degree point on $X_1(n')$ for $n'\in \{21, 63, 42, 126\}$ must have degree 63, 567, 189, or 1701, respectively. Since $567=9\cdot 63$ and $1701=9 \cdot 189$, by Theorem \ref{LevelLowering}, we need only consider $n'=3 \cdot 7$ and $n'= 2 \cdot 3 \cdot 7$. However, $X_1(21)$ has genus 5 and $X_1(42)$ has genus 25, so points of degree 63 and 189 (respectively) cannot be isolated {since their associated Riemann-Roch spaces have dimension at least 59}. We have reached a contradiction.
\item Suppose $E=162.c2$. Here, any point of odd degree on $X_1(n')$ must have degree 21, 189, 63, or 567, respectively. As in the previous case, we reach a contradiction.
\item Suppose $E=162.c4$. In this case, an odd degree point on $X_1(n')$ will have degree 9, 81, 27, or 243, respectively. Again, we will reach a contradiction. \qedhere
\end{itemize}
\end{proof}

\subsection{On the level the Galois representations for $E/\Q$ with $j(E)=3^3\cdot5\cdot7^5/2^7$}

\begin{lem} \label{OneJLevel}
If $E/\Q$ is an elliptic curve with $j(E)=3^3\cdot5\cdot7^5/2^7$, then for any choice of basis the image of $\rho_{E,7^{\infty}}$ contains $I_2+7M_2(\Z_7)$.
\end{lem}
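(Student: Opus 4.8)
The plan is to show that for a fixed model the mod $49$ image is as large as possible given the mod $7$ image, and then to lift this to the full $7$-adic representation using that $I_2 + 7M_2(\Z_7)$ is a powerful pro-$7$ group. Since $I_2 + 7M_2(\Z_7)$ is normal in $\GL_2(\Z_7)$, the desired conclusion is independent of the choice of basis, so it suffices to work with a single convenient model $E_0/\Q$ with $j(E_0) = 3^3\cdot 5\cdot 7^5/2^7$. By \cite{ZywinaImages} (see also the first remark following Theorem \ref{OddDegThm}), the image $\overline{G} := \im\rho_{E_0,7}$ is conjugate to 7Ns.2.1 or 7Ns.3.1; in particular $\overline{G}$ lies in the normalizer of a split Cartan subgroup and hence has order dividing $72$, which is prime to $7$.

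Before the main step I would dispatch the reduction to a single model. If $E_0^{(d)}$ is a quadratic twist of $E_0$, then $\im\rho_{E_0^{(d)},7^\infty} = \{\chi_d(\sigma)\rho_{E_0,7^\infty}(\sigma)\}$, where $\chi_d$ is the associated quadratic character. Restricting to $\Gal_F$, with $F$ the fixed field of $\chi_d$, the twisting character is trivial, so $\im\rho_{E_0^{(d)},7^\infty} \supseteq \rho_{E_0,7^\infty}(\Gal_F)$. Because $I_2 + 7M_2(\Z_7)$ is a pro-$7$ group contained in $\im\rho_{E_0,7^\infty}$, it lies in every subgroup of index coprime to $7$, and in particular in the index-$\le 2$ subgroup $\rho_{E_0,7^\infty}(\Gal_F)$. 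Thus the conclusion for $E_0$ implies it for every quadratic twist, and hence for every elliptic curve over $\Q$ with this $j$-invariant.

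The main step is to prove that $\im\rho_{E_0,49}$ equals the full preimage $\pi^{-1}(\overline{G})$ under reduction $\GL_2(\Z/49\Z)\to \GL_2(\Z/7\Z)$. Equivalently, writing $S := \ker(\im\rho_{E_0,49}\to \overline{G})$, I want $S = I_2 + 7M_2(\Z/49\Z)$, a group of order $7^4$; since $\Gal(\Q(E_0[49])/\Q(E_0[7]))\cong S$, this amounts to verifying $[\Q(E_0[49]):\Q(E_0[7])] = 7^4$, the maximal possible value. I would establish this by a direct computation with the explicit curve, for instance by computing the mod $49$ image in Magma (equivalently, by factoring the $49$th division polynomial to control the relevant torsion field). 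The fact that $\overline{G}$ has order prime to $7$ is reassuring here: by Schur--Zassenhaus the mod $49$ image has $S$ as a normal Sylow pro-$7$ subgroup with complement isomorphic to $\overline{G}$, so the computation reduces to pinning down the size of the pro-$7$ part.

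Given the main step, the lift is immediate and mirrors the end of the proof of Lemma \ref{extraLem1}. The pro-$7$ group $I_2 + 7M_2(\Z_7)$ is uniform, with Frattini quotient $(I_2 + 7M_2(\Z_7))/(I_2 + 49 M_2(\Z_7)) \cong M_2(\F_7)$, so any closed subgroup surjecting onto this quotient is the whole group. The main step shows precisely that $\im\rho_{E_0,7^\infty}\cap(I_2 + 7M_2(\Z_7))$ surjects onto $M_2(\F_7)$, so \cite[Prop. 3.5]{BELOV} yields $\im\rho_{E_0,7^\infty}\supseteq I_2 + 7M_2(\Z_7)$. The principal obstacle is the main step: a priori the exceptional mod $7$ behavior at this $j$-invariant could force an extra congruence at level $49$ (an entanglement lowering $[\Q(E_0[49]):\Q(E_0[7])]$ below $7^4$), and ruling this out is exactly what the explicit computation accomplishes.
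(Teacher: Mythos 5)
Your proposal is correct and follows essentially the same route as the paper: fix an explicit curve with this $j$-invariant, use Zywina's result to pin down the mod $7$ image, verify computationally that the mod $49$ image is the full preimage, and lift to the $7$-adic level by a standard Frattini-type argument (the paper cites Lang--Trotter where you cite \cite[Prop.~3.5]{BELOV}); your explicit quadratic-twist reduction is a welcome addition that the paper leaves implicit. The only substantive difference is in how the mod $49$ step is certified: where you propose a direct (and somewhat underspecified) computation of the mod $49$ image or a division-polynomial factorization, the paper rules out each of the $8$ maximal subgroups of the preimage $\tilde H$ by counting the pairs $(\operatorname{trace}\rho_{E,49}(\mathrm{Frob}_p), \det\rho_{E,49}(\mathrm{Frob}_p)) = (a_p(E) \bmod 49,\, p \bmod 49)$ over primes $p\le 10^5$, finding $483$ distinct pairs whereas any maximal subgroup admits at most $357$ --- a cheaper and fully rigorous certificate you would need to supply in some form to complete your main step.
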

\begin{proof}
  Let $E : y^{2} + xy = x^{3} - x^{2} - 107x - 379$ be a particular elliptic curve
  $E/\Q$ with $j(E) = 3^{3} \cdot 5 \cdot 7^{5}/2^{7}$. There is a degree $9$ extension of $\Q$ which contains the $x$-coordinate
  of a point of order $7$ on $E$. Theorem 1.5 of \cite{ZywinaImages} shows
  that the mod $7$ image of Galois for $E$ is the subgroup $H$ generated
  by $\begin{bmatrix} 2 & 0 \\ 0 & 4 \end{bmatrix}$,
  $\begin{bmatrix} 0 & 2 \\ 1 & 0 \end{bmatrix}$, and $\begin{bmatrix} -1 & 0 \\ 0 & -1 \end{bmatrix}$. By \cite[Part I, $\S6$, Lemmas 2 \& 3]{LT76}, it will suffice to show that $\im \rho_{E,49}$ is the complete preimage of $\im \rho_{E,7}$.
  
  If not, then
  the mod $49$ image is contained in a maximal subgroup of the mod $49$ preimage
  of $H$, which we denote $\tilde{H}$. This subgroup has $8$ maximal subgroups. We verify that for $\tilde{H}$, the set $\{ ({\rm trace} (g),\det(g)) : g \in \tilde{H} \}$ has $483$ elements, while for every maximal subgroup $M$, the
  size of $\{ ({\rm trace} (g), \det(g)) : g \in M \}$ has at most $357$ elements. For any prime $p \neq 7$ of good reduction,
\begin{align*}
  {\rm trace} (\rho_{E,49}({\rm Frob}_{p}) )&\equiv a_{p}(E) \pmod{49}\\
  \det (\rho_{E,49}({\rm Frob}_{p})) &\equiv p \pmod{49}.
\end{align*}
We compute $a_{p}(E)$ and compute the many pairs in the set $\{ ({\rm trace} (g), \det(g)) : g \in {\rm im}~\rho_{E,49} \}$. Testing all primes $p \leq 10^{5}$, we find that the latter set contains
  $483$ entries, and this proves that the image of $\rho_{E,49}$ must be
$\tilde{H}$. \href{http://users.wfu.edu/rouseja/isolated/odd7.txt}{See the website of the third author for the Magma code used.} \end{proof}

\subsection{Isolated points on $X_1(2^ap^b)$ for $p>3$}

\begin{thm} \label{IsolPtsThm}
Let $x \in X_1(n)$ be an isolated point corresponding to a non-CM elliptic curve with $\deg(x)$ odd and $j(x) \in \Q$. Then $n=2^a3^b$ for nonnegative integers $a,b$ or $j(x) \in \{-3^2\cdot 5^6/2^3, \,3^3\cdot 13/2^2\}$.
\end{thm}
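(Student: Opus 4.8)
### Approach

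The plan is to leverage the structural results already assembled---Theorem \ref{OddDegThm}, Proposition \ref{OddDegProp}, Proposition \ref{25lem}, and Proposition \ref{Prop:3j}---to funnel every admissible level $n$ into one of a small number of cases, and then dispatch each case either by a genus-zero/level-lowering argument or by exhibiting one of the two exceptional $j$-invariants. First I would invoke Theorem \ref{OddDegThm}: since $x$ corresponds to a non-CM elliptic curve with $\deg(x)$ odd and $j(x)\in\Q$, and since the exceptional $j$-invariant $3^3\cdot5\cdot7^5/2^7$ has $\Supp(n)\subseteq\{2,7\}$ with $a\le 1$, we reduce to $n=2^ap^b$ with $p\in\{3,5,7,11,13,19,43,67,163\}$ (the CM-only primes $19,43,67,163$ being excluded in the non-CM setting) or to the case where $j(x)$ arises from $X_0(21)(\Q)$. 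The goal is to show that unless $p=3$, the point $x$ either fails to be isolated or forces $j(x)$ to equal one of the two listed values.

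The heart of the argument is a case analysis on the prime $p>3$. For each such $p$ I would use Theorem \ref{isogenyTHM} to conclude that $\im\rho_{E,p^\infty}$ is the full preimage of its reduction mod $p$ or mod $p^2$, so that by Proposition \ref{prop:Degree} the degree of $x$ factors as $\deg(g)\deg(g(x))$ under projection $g$ to the appropriate curve $X_1(2^a p)$ or $X_1(2^ap^2)$; Theorem \ref{LevelLowering} then transfers isolatedness downward to a curve of small level. For $p=5$ with a rational cyclic $25$-isogeny, Proposition \ref{25lem} already rules out isolated points; the remaining $p=5$ configurations and the primes $p=11,13$ are handled by noting that the relevant low-level modular curves $X_1(2^ap)$ have genus zero or that the associated Riemann--Roch space is too large for $\mathbb{P}^1$-isolatedness, except precisely when the fiber-product computations of $\S3.4$ produce $j(x)=3^3\cdot13/2^2$. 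The case $p=7$ uses Lemma \ref{OneJLevel} together with Proposition \ref{OddDegProp} to bound $a\le 1$ and then a genus/level-lowering contradiction, while the residual $p=5$ entanglement with $2$-torsion that survives is exactly what leaves $-3^2\cdot5^6/2^3$ on the list via $X_1(21)$. Throughout, Proposition \ref{Prop:3j} eliminates the three non-isolated $21$-isogeny $j$-invariants coming from $X_0(21)(\Q)$.

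The main obstacle I anticipate is the bookkeeping at the boundary between ``the projected point lives on a genus-zero curve'' and ``the projected point has degree forcing a large Riemann--Roch space,'' because the level-lowering hypothesis $\deg(x)=\deg(g)\deg(g(x))$ must be verified exactly, and this verification depends delicately on the level of $\rho_{E,p^\infty}$ and on possible entanglement between the $p$-adic and $2$-adic (or $3$-adic) representations. In particular, one must rule out the degenerate possibility that $g(x)$ has degree $1$---corresponding to a rational point that would manifest a forbidden isogeny via Theorem \ref{IsogClassification}---and the possibility that residual $2$-power contributions to the degree are even, which would contradict $\deg(x)$ being odd. Establishing that every surviving configuration either collapses to genus zero or yields one of the two named $j$-invariants requires the explicit division-polynomial and fiber-product computations, but the conceptual engine is entirely the combination of Theorem \ref{isogenyTHM} and Theorem \ref{LevelLowering}; the primes $p\in\{5,7,11,13\}$ are the only ones where a non-CM rational cyclic $p$-isogeny can occur, so the classification is genuinely finite and the two exceptions $-3^2\cdot5^6/2^3$ and $3^3\cdot13/2^2$ emerge as the only obstructions to the clean conclusion $n=2^a3^b$.
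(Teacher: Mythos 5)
Your overall architecture matches the paper's proof: reduce to $n=2^ap^b$ with $p\in\{3,5,7,11,13\}$ via Theorem \ref{OddDegThm} and Proposition \ref{Prop:3j}, bound $a$ by Proposition \ref{OddDegProp}, use Theorem \ref{isogenyTHM} (plus Lemma \ref{OneJLevel}) to verify the multiplicativity hypothesis of Theorem \ref{LevelLowering}, and kill the resulting isolated points on $X_1(p)$ or $X_1(2p)$ by genus-zero or Riemann--Roch arguments. However, there is a concrete gap. Proposition \ref{OddDegProp} excludes \emph{two} $j$-invariants from its conclusion $a\le 1$, namely $3^3\cdot 13/2^2$ and $-3^3\cdot 13\cdot 479^3/2^{14}$. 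The first is in the theorem's exceptional list, but the second is not, so for $j(x)=-3^3\cdot 13\cdot 479^3/2^{14}$ you cannot rule out $a=2$ and must separately dispose of the levels $n=2^2\cdot 7^b$. Your proposal never addresses this $j$-invariant. The paper handles it by showing $x$ maps to an isolated point on $X_1(28)$ of degree $63$, which cannot be $\mathbb{P}^1$-isolated since $X_1(28)$ has genus $10$; without some such argument the theorem as stated is not proved.

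Two smaller issues. First, your claim that ``the residual $p=5$ entanglement with $2$-torsion \ldots leaves $-3^2\cdot 5^6/2^3$ on the list via $X_1(21)$'' misattributes the source of that $j$-invariant: it is one of the four non-CM $j$-invariants with a rational cyclic $21$-isogeny (the case $n=2^a3^b7^c$ from Theorem \ref{OddDegThm}(ii)), and it survives because Proposition \ref{Prop:3j} covers only the other three $21$-isogeny $j$-invariants; it has nothing to do with $p=5$. Second, for $p=13$ the blanket appeal to ``genus zero or large Riemann--Roch space'' is insufficient: the image $g(x)\in X_1(26)$ could a priori have degree $3$, and a degree $3$ point on a genus $10$ curve is not excluded by Riemann--Roch. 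The paper needs the external input that $X_1(26)$ has no degree $3$ points with rational $j$-invariant (Theorem 1.3 of \cite{Guzvic}) to force $\deg(g(x))=9$ and then level-lower to $X_1(13)$. These steps need to be supplied for the argument to close.
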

\begin{proof}
Let $x=[E,P]\in X_1(n)$ be an isolated point corresponding to a non-CM elliptic curve with $\deg(x)$ odd and $j(x)\in \Q$. We fix a model for $E/\Q$. For now, we assume $j(x) \neq -3^3\cdot 13 \cdot 479^3/2^{14}$ and that $j(x)$ is not one of the two $j$-invariants in the theorem statement. By Theorem \ref{OddDegThm}, {Proposition \ref{Prop:3j}}, and \cite[Table 4]{LR}, $n=2^ap^b$ for $p \in \{3,5,7,11,13\}$ and nonnegative integers $a,b$. It suffices to consider the case where $b >0$ and $p \geq 5$. Then $a \leq 1$ by Proposition \ref{OddDegProp} {(since we have assumed for now that $j(x) \neq -3^3\cdot 13 \cdot 479^3/2^{14}$, and $3^3\cdot 13/2^2$ appears in the theorem statement)}, and $j(x)=3^3\cdot5\cdot7^5/2^7$ or $E$ corresponds to a rational point on $X_0(p)$ by Theorem \ref{OddDegThm}. For $p = 5$, we may suppose further that $E$ does not have a $\Q$-rational cyclic 25-isogeny by Proposition \ref{25lem}.

First suppose $n=p^b$, and let $f: X_1(p^b) \rightarrow X_1(p)$ be the natural map. Theorem \ref{isogenyTHM} and Lemma \ref{OneJLevel} give that $\im \rho_{E,p^{\infty}}=\pi^{-1}(\im \rho_{E,p})$, so $\deg(x)=\deg(f)\cdot\deg(f(x))$. Thus $f(x) \in X_1(p)$ is isolated by Theorem \ref{LevelLowering}. However, $X_1(p)$ has no isolated points of odd degree if $p\in \{5, 7, 11, 13\}$, as we will now demonstrate. $X_1(5)$ and $X_1(7)$ have genus 0 and thus have no isolated points. Since $X_1(11)$ and $X_1(13)$ have no non-cuspidal rational points by \cite{mazur77}, the assumption of odd degree gives $\deg(f(x)) \geq 3$. However, since $X_1(11)$ has genus 1 and $X_1(13)$ has genus 2, the Riemann-Roch space $L(f(x))$ has dimension at least 2 and $f(x)$ is not $\mathbb{P}^1$-isolated. We have reached a contradiction.

So suppose $n=2p^b$. Let $g: X_1(2p^b) \rightarrow X_1(2p)$ be the natural map. We will show that $\deg(x)=\deg(g)\cdot\deg(g(x))$. As above, Theorem \ref{isogenyTHM} and Lemma \ref{OneJLevel} show $x$ maps to a point $x'=[E,2P] \in X_1(p^b)$ such that $\deg(x')=\deg(f)\cdot\deg(f(x'))$. By Proposition \ref{prop:Degree}, we have 
\[
p^{2b-2} \mid [\Q(x):\Q(f(x'))]=[\Q(x):\Q(h(g(x)))],
\]
where $h: X_1(2p) \rightarrow X_1(p)$. Since $\deg(h)=3$ by Proposition \ref{prop:Degree}, it follows that $p^{2b-2} \mid [\Q(x):\Q(g(x))]$. Since $[\Q(x):\Q(g(x))] \leq \deg(g)=p^{2b-2}$, it follows that $[\Q(x):\Q(g(x))]=\deg(g)$, or that $\deg(x)=\deg(g)\cdot \deg(g(x))$. Thus $g(x)\in X_1(2p)$ is isolated by Theorem \ref{LevelLowering}. We will reach a contradiction by considering each prime separately.
\begin{enumerate}
\item Suppose $p = 5$. Then $X_1(10)$ has genus 0 and thus has no isolated points.
\item Suppose $p=7$. Then $X_1(14)$ has genus 1. By Mazur \cite{mazur77}, there are no non-cuspidal rational points on $X_1(14)$, so the assumption of odd degree forces $\deg(g(x)) \geq 3$. Thus the Riemann-Roch space $L(g(x))$ has dimension at least 3, and so $g(x)$ is not $\mathbb{P}^1$-isolated.
\item Suppose $p=11$. Since $E$ corresponds to a rational point on $X_0(11)$ and is non-CM, we have $j(E)=-11^2$ or $j(E)=-11\cdot 131^3$ by \cite[Table 4]{LR}. By computing division polynomials associated to a fixed model of $E/\Q$ for each $j$-invariant, we find $\deg(g(x))\geq15$. Since $X_1(22)$ has genus 6, the Riemann-Roch space $L(g(x))$ has dimension at least 10, and so $g(x)$ is not $\mathbb{P}^1$-isolated. 
\item Suppose $p=13$. Since $\deg(g(x))$ is odd, the classification of images of mod 13 Galois representations for elliptic curves over $\Q$ implies $g(x)$ maps to a point of degree 3 or 39 on $X_1(13)$. See for example \cite[Tables 1 \& 2]{GJN}. If it is degree 39, then $g(x) \in X_1(26)$ has degree at least 39. But $X_1(26)$ has genus 10, and so the Riemann-Roch Theorem shows $g(x)$ is not $\mathbb{P}^1$-isolated. So $g(x)$ must map to a point of degree 3 on $X_1(13)$. Since there are no degree 3 points on $X_1(26)$ associated to elliptic curves with rational $j$-invariant by Theorem 1.3 in \cite{Guzvic}, we have $\deg(g(x))=9$. But then if $h: X_1(26) \rightarrow X_1(13)$, we have $\deg(g(x))=\deg(h)\cdot \deg(h(g(x))$, and so $h(g(x))\in X_1(13)$ is isolated by Theorem \ref{LevelLowering}. As in the second paragraph, no such isolated point exists.
\end{enumerate}
In each case, we arrive at a contradiction.

If $j(x)= -3^3\cdot 13 \cdot 479^3/2^{14}$, then $E$ corresponds to a rational point on $X_0(p)$ only if $p=7$. Thus Theorem \ref{OddDegThm} shows that $n=2^a$, $7^b$, $2\cdot 7^b$, or $2^2 \cdot 7^b$ for $b>0$. The first cases follow as above, so it remains to consider the case when $n=2^2 \cdot 7^b$. We will show that $x$ maps to an isolated point on $X_1(28)$. Let $g:X_1(4\cdot 7^b) \rightarrow X_1(4 \cdot 7)$ and $h: X_1(4\cdot 7) \rightarrow X_1(7)$ be the natural maps. Since the $x$-coordinate of a point of order 4 satisfies a polynomial of degree 6, the degree of $\Q(g(x))$ over $\Q(h(g(x))$ is not divisible by 7. Then as in the third paragraph of this proof, 
\[
[\Q(x):\Q(g(x))]=7^{2b-2}=\deg(g).
\]
By Theorem \ref{LevelLowering}, $g(x)$ is isolated. Computing division polynomials shows that $\deg(g(x))=63$. Note $X_1(28)$ has genus 10. By the Riemann-Roch Theorem, the dimension of $L(g(x))$ is at least 54 and $g(x)$ is not $\mathbb{P}^1$-isolated.
\end{proof}

\subsection{Isolated points on $X_1(2^a3^b)$}

To study isolated points on $X_{1}(2^{a} 3^{b})$ we rely on the results of \cite{3adicimages}, which give a complete classification of the image of the $3$-adic
Galois representation for non-CM elliptic curves $E/\Q$ with a rational $3$-isogeny. The only cases that arise are parametrized by genus $0$ modular curves,
and Sutherland and Zywina \cite{SutherlandZywina} exhibit all such
subgroups containing $-I$. (Note for any $E/\Q$ there exists a twist $E'/\Q$ such that $-I \in \im \rho_{E'}$, and the choice of model does not effect the degree of a point on $X_1(N)$.) A table giving a list of these images
appears in the appendix.

\begin{prop} \label{ReductionProp}
  Let $x \in X_1(2^a3^b)$ be an isolated point corresponding to a non-CM elliptic curve with $\deg(x)$ odd and $j(x) \in \Q$. Then $x$ maps to an isolated
  point on one of $X_{1}(54)$ or $X_{1}(162)$.
\end{prop}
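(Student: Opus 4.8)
The plan is to bound the level of the mixed representation $\rho_{E,6^{\infty}}$ attached to $x=[E,P]$ and then read off the conclusion from the level-lowering criterion of Theorem \ref{LevelLowering}: if the natural map $f$ from $X_1(2^a3^b)$ down to the curve of level $2^{\alpha}3^{\beta}$ satisfies $\deg(x)=\deg(f)\deg(f(x))$, then $f(x)$ is isolated on the target. First I would clear away the degenerate cases. Theorem \ref{OddDegThm} gives $n=2^a3^b$ with $a\le 3$, and with $a\le 2$ once $b>0$; if $b=0$ then $X_1(2^a)$ has genus $0$ and hence no isolated points by Theorem \ref{thm:FiniteIsolated}, a contradiction. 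So $b>0$ and $a\le 2$. Since $3\mid n$ and $E$ is non-CM (the exceptional $j$-invariant $3^3\cdot 5\cdot 7^5/2^7$ carries no rational isogeny and so cannot occur), Lemma \ref{OddDegLem} shows $E$ has a rational cyclic $3$-isogeny.

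With a rational $3$-isogeny in hand, the classification of $3$-adic images of non-CM curves over $\Q$ from \cite{3adicimages} applies: $\im\rho_{E,3^{\infty}}$ is one of the finitely many groups tabulated in the appendix, each of which is the image attached to a genus-$0$ modular curve, and in particular $\rho_{E,3^{\infty}}$ has level dividing $3^3$. Independently I would read off the $2$-adic image from the Rouse--Zureick-Brown classification \cite{RouseDZB}, using the hypothesis that $\deg(x)$ is odd exactly as in the proof of Theorem \ref{OddDegThm}; this constrains the $2$-adic level and, in the extremal case $a=2$, forces $E$ to carry a rational point of order $4$. I would then assemble these two local pictures via the Goursat/kernel-chain analysis of \cite[Prop.~6.1]{BELOV}, in the sharpened form used to prove Lemma \ref{extraLem1}, tracking the $3$-power entanglement between $\Q(E[2^{\infty}])$ and $\Q(E[3^{\infty}])$. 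The goal of this step is to show $\im\rho_{E,6^{\infty}}=\pi^{-1}(\im\rho_{E,2\cdot 3^{\beta}})$ with $\beta\le 4$, so that Proposition \ref{prop:Degree} yields the full-degree identity $\deg(x)=\deg(f)\deg(f(x))$ onto the level curve $X_1(2\cdot 3^{\beta})$.

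To finish I would rule out the small surviving possibilities by a Riemann--Roch count: levels with $3$-part at most $3^2$ give the quotients $X_1(6)$ and $X_1(18)$, of genus $0$ and $2$, which are too small to support an odd-degree isolated point, contradicting isolation of $f(x)$. This leaves $\beta\in\{3,4\}$, i.e.\ a full-degree map onto an isolated point of $X_1(54)$ or $X_1(162)$, which is the assertion. The cases $a=0$ (no $2$-part, so no map to either target curve exists) and $a=2$ (rational $4$-torsion, where a priori the $2$-part of the mixed level exceeds $2^1$) must be disposed of separately, by showing the corresponding full-degree image lands on a curve too small in genus to carry an odd-degree isolated point, or by folding them into the explicit computation of the next subsection.

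The hard part will be the entanglement step together with the $2$-adic bookkeeping it forces. The generic bound of \cite[Prop.~6.1]{BELOV} need not be sharp, so—just as in Lemma \ref{extraLem1}—I expect to have to use the explicit list of $3$-adic images, the good-reduction and Weil-pairing constraints on ramification in the torsion fields, and a finite collection of fiber-product and division-polynomial computations to certify both that the $3$-power entanglement raises the level by at most one factor of $3$ and that the $2$-part of the mixed level is exactly $2^1$. Pinning down the extremal $a=2$ and $a=0$ configurations precisely enough to send everything onto $X_1(54)$ or $X_1(162)$ is where the argument is most delicate.
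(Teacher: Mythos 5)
Your skeleton is the same as the paper's: extract a rational $3$-isogeny from Lemma \ref{OddDegLem}, bound the $3$-adic level by $3^3$ via \cite{3adicimages}, apply \cite[Prop.~6.1]{BELOV} to get $\im\rho_{E,6^{\infty}}=\pi^{-1}(\im\rho_{E,2^{\alpha}3^{\beta}})$ with $\beta\le 4$, and use Theorem \ref{LevelLowering} to push $x$ to an isolated point on $X_1(2^m3^n)$ with $m\le 2$, $n\le 4$, then eliminate small targets. But the proposal leaves the genuinely hard part unproved, and one of its claims is false. You assert that $a=2$ forces $E$ to have a rational point of order $4$; it does not. The Rouse--Zureick-Brown classification allows a point of order $4$ in odd degree greater than $1$ precisely when $E$ gives a rational point on X20, and this is a live case that cannot be absorbed into ``rational $4$-torsion.'' The paper's elimination of the targets $X_1(36)$, $X_1(108)$, $X_1(324)$ splits on whether $E$ has rational $2$-torsion: in the X20 case it invokes the Daniels--Gonz\'alez-Jim\'enez computation that the fiber product of $X_0(3)$ and X20 has only two non-cuspidal rational $j$-invariants, both with $3$-adic level $3$, and then a division-polynomial/genus count on $X_1(36)$; in the rational-$2$-torsion case it must rule out the $3$-adic images $9B^{0}\mhyphen9a$, $3D^{0}\mhyphen3a$, and $9C^{0}\mhyphen9a$ compatible with a rational $4$-isogeny (using Theorem \ref{IsogClassification}, Kenku, and an explicit genus-$2$ fiber product with $5$ rational points), forcing level $3$ and a final two-case analysis on $X_1(36)$. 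None of this appears in your outline, and deferring it to ``the explicit computation of the next subsection'' fails because Proposition \ref{54prop} only treats $X_1(54)$ and $X_1(162)$.

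Two further gaps. First, the case $a=0$ is not disposed of by observing that no map to $X_1(54)$ exists; the proposition in that case amounts to showing no such isolated point exists at all. The paper does this by level-lowering to $X_1(3^{d'})$ with $d'\le 3$, killing $d'\le 2$ by genus $0$, and for $d'=3$ using the orbit sizes of $27A^{0}\mhyphen27a$ (degrees $[27,54,243]$ on $X_1(27)$ versus $[3,6,27]$ on $X_1(9)$) to level-lower again to the genus-$0$ curve $X_1(9)$. Second, your stated goal of proving that the $2$-part of the mixed level is exactly $2^1$ is neither necessary nor true in general: a curve on X20 has $2$-adic level as large as $2^5$, so the mixed level's $2$-part can be large. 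What matters is the gcd of the mixed level with $2^a3^b$, which caps the target's $2$-exponent at $a\le 2$; the $2$-exponent-$2$ targets must then be excluded by the case analysis above, not by a sharper entanglement bound.
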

\begin{proof}
  Let $x=[E,P] \in X_1(2^a3^b)$ be an isolated point corresponding to a non-CM elliptic curve with $\deg(x)$ odd and $j(x) \in \Q$. We fix a model of $E/\Q$. If $b=0$, {then $x$ is not isolated as in the proof of Proposition \ref{25lem}.} Next, suppose $a=0$, and let $3^d$ be the level of the 3-adic Galois representation associated to $E$. Then $\deg(x)=\deg(f)\deg(f(x))$, where $f:X_1(3^b) \rightarrow X_1(\gcd(3^d, 3^b))$ is the natural map; see Proposition 5.8 in \cite{BELOV}. {By Lemma~\ref{OddDegLem}, $E$ has a rational $3$-isogeny and from the classification in \cite{3adicimages}, $f(x) \in X_1(3^{d'})$ is isolated by Theorem \ref{LevelLowering} for some $d' \leq 3$.} If $d' \leq 2$, then we have reached a contradiction since $X_1(3^{d'})$ has genus 0, so suppose $d'=3$. Note this is only possible if the image of the 3-adic Galois representation associated to $E$ has level 27,
  which implies that it is $27A^{0}\mhyphen 27a$. By looking at orbit sizes of points of order 9 and points of order 27 {(see Appendix)}, we see that in fact $f(x)$ will again map to an isolated point on $X_1(9)$, which is a contradiction.

Thus we may assume $a, b >0$, and $E$ has a rational $3$-isogeny by Lemma \ref{OddDegLem}. By the classification in \cite{3adicimages}, the 3-adic Galois representation has level $3^d$ for $d \in \{1, 2, 3\}$. Then by Proposition 6.1 in \cite{BELOV}, 
\[
\im \rho_{E, 6^{\infty}} = \pi^{-1}(\im \rho_{E, 2^{\alpha}3^{\beta}})
\]
where $\beta \leq d+1$. Let $g: X_1(2^a3^b) \rightarrow X_1(\gcd(2^a3^b, 2^{\alpha}3^{\beta}))$ be the natural map. By Proposition 5.8 in \cite{BELOV}, 
\[
\deg(x)=\deg(g)\deg(g(x)),
\]
and so $x$ maps to an isolated point on $X_1(\gcd(2^a3^b, 2^{\alpha}3^{\beta}))$ by Theorem \ref{LevelLowering}. Since $a \leq 2$ by Theorem \ref{OddDegThm}, and $\beta \leq 4$, it follows that $x$ maps to an isolated point on $X_1(2^m3^n)$ for $m \leq 2$ and $n \leq 4$. We have already shown we cannot have $m=0$ or $n=0$, so after removing curves of genus 0 we are left with
\begin{align*}
X_1(2\cdot3^2), \, X_1(2\cdot3^3), \, X_1(2\cdot3^4)\\
X_1(2^2\cdot3^2), \, X_1(2^2\cdot3^3), \, {\text{and }} X_1(2^2\cdot3^4){.}
\end{align*}
{Now} $X_1(18)$ has no non-cuspidal points of degree 1 by \cite{mazur}, so any point of odd degree must have degree at least 3. Since $X_1(18)$ has genus 2, the Riemann-Roch space $L(g(x))$ has dimension at least 2 and so $g(x)$ is not $\mathbb{P}^1$-isolated. It remains to rule out curves of the form $X_1(2^2 \cdot 3^n)$. Note that if $g(x) \in X_1(2^2\cdot 3^n)$ is of odd degree, then its image on $X_1(4)$ has odd degree.

First suppose $E$ does not have a rational point of order 2. Then the only way $E$ can correspond to a point on $X_1(4)$ of odd degree is if $E$ gives a rational point on X20 by \cite{RouseDZB}. However, the fiber product of $X_0(3)$ and X20 has non-cuspidal points corresponding only to $j=3^{2} \cdot 23^{3}/2^{6}$  and $j=-3^{3} \cdot 11^{3}/2^{2}$ \cite[Prop. 6]{DanielsGJ20}. {Using the classification of 3-adic images in \cite{3adicimages}, we confirm the 3-adic Galois representation associated to an elliptic curve with each of these $j$-invariants has level 3. Thus it suffices to rule out isolated points on $X_1(36)$ corresponding to these $j$-invariants. Note $X_1(36)$ has genus 17. By computing division polynomials, we see that in either case the degree of a point on $X_1(36)$ is at least 27, which means the dimension of the associated Riemann-Roch space is at least 11 and the point is not isolated.}

Suppose $E$ has a rational point of order 2. Since $\deg(X_1(4) \rightarrow X_1(2))$ has degree 2, $x$ corresponds to a point of odd degree on $X_1(4)$ only if it has degree 1. We now consider the possible images of the 3-adic Galois representation associated to $E$ in the case where it has a 4-isogeny and a 3-isogeny. It suffices to consider only those subgroups containing $-I$. Then:
\begin{itemize}
\item the $3$-adic image cannot be contained in $9B^{0}\mhyphen9a$, since that would imply $E$ had a rational cyclic 36-isogeny, contradicting Theorem \ref{IsogClassification}.
\item the $3$-adic image cannot be contained in $3D^{0}\mhyphen3a$, since that would imply $E$ had a 3-isogeny and an independent 12-isogeny. This cannot occur. See \cite[Theorem 2]{kenku}.
\item the $3$-adic image cannot be contained in $9C^{0}\mhyphen9a$ because the fiber product of $X_0(4)$ and $X_{9C^{0}\mhyphen 9a}$ covers the fiber product of $X_0(2)$ and $X_{9C^{0} \mhyphen 9a}$ and that curve has no non-cuspidal, non-CM rational points. It is genus 2 with 5 rational points: 3 cusps, $j = 0$ and $j = 2^{4} \cdot 3^{3} \cdot 5^{3}$. \href{http://users.wfu.edu/rouseja/isolated/x02andX9C.txt}{Code is available at the website of the third author.}
\end{itemize}

Thus the image of the 3-adic Galois representation associated to $E$ must be $3B^{0}\mhyphen3a$. In particular, it has level 3. This is the case where $\beta \leq 2$, so $x$ maps to an isolated point on $X_1(4 \cdot 3^2)$ of degree at least 9. We consider two cases.
\begin{enumerate}
\item Suppose $x$ lies above a point of degree 1 on $X_1(3)$. Since $x$ also lies above a point of degree 1 on $X_1(4)$, then the assumption that $x$ has odd degree means it corresponds to a point of degree 1 on $X_1(12)$. Then $x$ corresponds to a point of degree 9 on $X_1(36)$, and since $\deg(X_1(36) \rightarrow X_1(12))=9$, by Theorem \ref{LevelLowering} $x$ maps to an isolated point on $X_1(12)$. This is a contradiction since $X_1(12)$ has genus 0.
\item Suppose $x$ lies above a point of degree 3 on $X_1(3)$. Then $x$ corresponds to a point of degree at least 27 on $X_1(36)$. Since $X_1(36)$ has genus 17, the associated Riemann-Roch space has dimension at least 11 and the point on $X_1(36)$ is not isolated. 
\end{enumerate}

In every case, we have reached a contradiction, so we are left with the curves $X_1(2\cdot3^3)$ and $X_1(2\cdot3^4)$, as in the theorem statement.
\end{proof}

\subsection{Isolated points on $X_1(54)$ and $X_1(162)$}
\begin{prop} \label{54prop}
There are no odd degree, non-cuspidal, non-CM isolated points $x$ on $X_{1}(54)$ or $X_{1}(162)$ with $j(x) \in \Q$. 
\end{prop}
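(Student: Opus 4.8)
The plan is to pin down the possible Galois images, translate the hypotheses into a single entanglement condition, and then reduce the problem to finding the rational points on one explicit genus $4$ curve. First I would take $x = [E,P]$ as in the statement, fix a model of $E/\Q$ with $-I \in \im \rho_E$ (which changes no degrees), and invoke Lemma \ref{OddDegLem} to get that $E$ has a rational $3$-isogeny; by the classification of \cite{3adicimages} (see the Appendix) its $3$-adic image then has level $3^{d}$ with $d \le 3$. I would first argue the image must be the level-$27$ group $27A^{0}\mhyphen 27a$: if its level were at most $9$, then the orbit sizes of points of order $3^{n}$ together with Proposition \ref{prop:Degree} and Theorem \ref{LevelLowering} would force $x$ to descend to an odd-degree non-CM isolated point on some $X_1(2\cdot 3^{m})$ with $m \le 2$, and such points were already excluded in the proof of Proposition \ref{ReductionProp}. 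This also explains why the $X_1(162)$ case can arise only through $2$-adic/$3$-adic entanglement, since the $3$-adic level never reaches $81$.

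With the $3$-adic image fixed, I would analyze the parity of $\deg(x) = c_x [\Q(P):\Q]$ using Lemma \ref{save2}, writing $P = T + P_{3^{n}}$ with $T$ of order $2$ and $P_{3^{n}}$ of order $3^{n}$. The key point is that $\Q(\sqrt{-3}) \subseteq \Q(E[3]) \subseteq \Q(E[3^{n}])$ by the Weil pairing, while $\Q(\sqrt{\Delta}) \subseteq \Q(E[2])$ for $\Delta$ the discriminant of the chosen model; tracking the contribution of the $2$-torsion to $[\Q(P):\Q]$ (and the effect of an element $\sigma$ with $\sigma(P) = -P$ on the factor $c_x$) shows that $\deg(x)$ can be odd while $x$ simultaneously fails to descend along $X_1(2\cdot 3^{n}) \to X_1(3^{n})$ only when there is a quadratic entanglement $\Q(\sqrt{\Delta}) \subseteq \Q(E[3^{n}])$. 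Thus every surviving candidate is an $E$ with $3$-adic image $27A^{0}\mhyphen 27a$ exhibiting exactly this entanglement.

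This entanglement locus is an explicit curve. Imposing the condition $\Q(\sqrt{\Delta}) \subseteq \Q(E[3^{n}])$ on the genus-$0$ family $X_{27A^{0}\mhyphen 27a}$ defines a cover of it---concretely a fiber product recording the shared quadratic subfield of the $2$-division field and the $3$-power torsion field---and I would compute an explicit model $Y$ and verify, via the ramification of the covering map, that $g(Y) = 4$, handling the $X_1(54)$ and $X_1(162)$ cases together. Any $x$ as in the statement then yields a rational point on $Y$, so it remains to compute $Y(\Q)$ and check that each such point reduces to a cusp of $X_1(54)$ or $X_1(162)$.

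The main obstacle is precisely this final step: enumerating $Y(\Q)$ for a genus $4$ curve is out of reach of a direct Chabauty--Coleman computation in general. I would therefore look for auxiliary maps from $Y$ to lower-genus quotients or to elliptic curves, aiming to bound $Y(\Q)$ either by a Mordell--Weil sieve or by exhibiting a quotient whose Jacobian provably has rank $0$; with $Y(\Q)$ determined, one verifies directly that every rational point is cuspidal. Producing a quotient with a provably small, computable Mordell--Weil group---so that the rational points of the genus $4$ curve can be completely pinned down---is where the real difficulty lies.
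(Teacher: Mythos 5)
There are several genuine gaps here, and the most serious one is structural. Your claim that the $3$-adic image must be $27A^{0}\mhyphen 27a$ whenever the point fails to descend is false: if the $3$-adic level is $3^{d}$, the bound from \cite[Prop.~6.1]{BELOV} only gives that the $6$-adic level is $2^{\alpha}3^{\beta}$ with $\beta \leq d+1$, so for $d=2$ (e.g.\ the image $9B^{0}\mhyphen 9a$, a rational $9$-isogeny) the $6$-adic level can already be $2^{\alpha}\cdot 27$ and no descent from $X_1(54)$ is forced. In the paper's argument the genus $4$ curve arises precisely in this level-$9$ situation; in fact the level-$27$ case is dispatched early (the fiber product $X_{27A^{0}\mhyphen 27a}\times_{X_0(1)}X_0(2)$ is a rank-zero genus $2$ curve with only cuspidal rational points, and a group-theoretic argument then forces descent from $X_1(162)$ to $X_1(54)$). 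Relatedly, your entanglement condition is the wrong one. Since every field in sight has odd ($3$-power) degree, a quadratic entanglement $\Q(\sqrt{\Delta})\subseteq \Q(E[3^{n}])$ cannot obstruct descent along $X_1(54)\to X_1(27)$ (the paper shows that when $\Q(E[2])\cap\Q(E[27])$ is trivial or quadratic, the point descends to $X_1(27)$ and one is done). The obstruction is a \emph{cubic} entanglement: the cubic subfield of $\Q(E[2])$ lying inside the $27$-division field, split into the cyclic-cubic case (square discriminant, handled by two genus $1$ and one genus $2$ auxiliary curves) and the $S_3$ case (which, after a group-theoretic reduction showing all candidate subgroups $K\subseteq\GL_2(\Z/54\Z)$ cover the one attached to $9B^{0}\mhyphen 9a$, produces the genus $4$ curve). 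You also omit the case where $E$ has a rational point of order $2$.

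The final step, which you correctly flag as the real difficulty, is also not resolved by your proposal but is resolved in the paper --- and not by Chabauty or a Mordell--Weil sieve on the genus $4$ curve itself. The entanglement condition is recast as asking when two explicit cubic polynomials $p_1(t,x_9)$ (cutting out the field of definition of a $27$-isogeny over $X_0(9)$) and $p_2(t,x_9)$ (the cubic subfield of $\Q(E[2])$) define isomorphic cubic fields; a resolvent sextic then gives a plane model which, after simplification, carries the visible degree $3$ map $(t,x_9)\mapsto(t^3,x_9)$ to a genus $1$ curve isomorphic to $y^{2}=x^{3}+1$, of rank zero with Mordell--Weil group $\Z/6\Z$. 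Only the two cuspidal points lift. So the ``quotient with provably small Mordell--Weil group'' you hope for does exist, but finding it requires the specific resolvent construction rather than a search for quotients of an abstractly presented genus $4$ curve; without it (and without the corrected case analysis above) the proof does not go through.
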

\begin{proof}
If $E$ is an elliptic curve and $j(x) \in \Q$, then the degree of $\Q(E[3^{k}])/\Q(j)$ is a divisor of $|\GL_{2}(\Z/3^{k} \Z)| = 2^{4} \cdot 3^{4k-3}$. This implies that if $x$ is an odd degree point with $j(x) \in \Q$, then the degree of $x$ is a power of $3$.

First suppose that $x=[E,P]$ is a non-cuspidal, non-CM isolated point on
$X_{1}(162)$, and fix a model of $E/\Q$. We will show that the image of $x$ on $X_{1}(54)$ is isolated. As in the proof of Proposition  \ref{ReductionProp}, if the level of the $3$-adic Galois representation associated to $E$ is $3^{d}$ for $d \leq 3$, the level of the $6$-adic Galois
representation is $2^{\alpha} 3^{\beta}$, where $\beta \leq d+1$. Thus $x$ maps to an isolated point on $X_1(\gcd(2^a3^b, 2^{\alpha}3^{\beta}))$. Proposition \ref{ReductionProp} shows $d \neq 1$, and if $d=2$, then $x$ would map to an isolated point on $X_1(54)$, as desired. Thus we may assume $d=3$. This implies the $3$-adic image is equal to $27A^{0}\mhyphen27a$ (up to $\pm I$). The fiber product $X_{27A^{0}\mhyphen27a} \times_{X_{0}(1)}
X_{0}(2)$ is the genus $2$ curve $y^{2} = x^{6} + 10x^{3} + 1$ whose
Jacobian has rank zero and has exactly four rational points (all of
them cusps). \href{http://users.wfu.edu/rouseja/isolated/x02andX27A.txt}{For details of the computation, see the website of the third author.} It follows that $E$ does not have a rational $2$-isogeny. Let $G$ be the image of the mod $162$ Galois representation attached
to $E$. Let $\pi_{1} : G \to \GL_{2}(\Z/81 \Z)$ and $\pi_{2} : G \to \GL_{2}(\Z/2 \Z)$ be the natural reduction maps. From the
classification of the $3$-adic representation, the image of $\pi_{1}$
contains all matrices $\equiv I \pmod{27}$. Let $H$ be the preimage
under $\pi_{2}$ of a subgroup of $\GL_{2}(\Z/2\Z)$ of index $3$. Then
$H$ has index $3$ in $G$ and so $\pi_{1}(H)$ is either equal
to the mod $81$ image of Galois or an index $3$ subgroup
thereof. However, every maximal subgroup of $27A^{0}\mhyphen27a$ has level $27$ and this means
that $\pi_{1}(H)$ contains all matrices congruent to the identity mod
$27$. If $g \in H$ is congruent to the identity modulo $27$
then $g^{2}$ is congruent to the identity modulo $54$ and from this we
see that $G$ contains all matrices congruent to the identity modulo
$54$. This implies that the degree of $x$ on $X_{1}(162)$ is
$\deg(X_{1}(162) \to X_{1}(54))$ times the degree of the image of $x$
on $X_{1}(54)$ and so the image of $x$ on $X_{1}(54)$ is isolated by
Theorem~\ref{LevelLowering}.

For the remainder of the proof, we will assume that $x$ is an odd degree,
non-cuspidal, non-CM isolated point on $X_{1}(54)$ with $j(x) \in \Q$. We fix a model of $E/\Q$.

Suppose that $E$ has a rational point of order $2$. The fact that $X_{1}(18)$ has no odd degree isolated points with rational $j$-invariant implies that the level of the $3$-adic Galois representation must be $27$ but as mentioned above, the fiber product $X_{27A^{0}\mhyphen 27a} \times_{X_{0}(1)} X_{0}(2)$ has no non-cuspidal rational points, which is a contradiction.

Next, suppose that the elliptic curve $E$ has no rational point of order $2$
and that $\Q(E[2]) \cap \Q(E[27])$ is either $\Q$ or a quadratic extension of $\Q$. It follows from this that the degree of $x$ on $X_{1}(54)$ is three times
the degree of $x$ on $X_{1}(27)$ and by Theorem~\ref{LevelLowering},
the image of $x$ on $X_{1}(27)$ must be isolated. From the $3$-adic classification, this does not occur. 

Next, suppose that $\Q(E[2]) \cap \Q(E[27])$ is a cyclic cubic
extension of $\Q$. This implies that $E$ has square discriminant $\Delta(E)$. A straightforward computation shows that $(j(E) - 1728) \Delta(E)$ is a square, from which it follows that $j(E)=1728+t^2$ for some $t \in \Q$. There are infinitely many elliptic curves with square discriminant and with $3$-adic image contained in $9C^{0}\mhyphen 9a$, but the modular curves parametrizing elliptic curves with square discriminant and a $9$-isogeny, or square discriminant and a pair of
two independent $3$-isogenies both have genus $1$ and are isomorphic
to $y^{2} = x^{3} - 27$. This elliptic curve has two rational points
and in both cases, these rational points are cusps. (See \href{http://users.wfu.edu/rouseja/isolated/X9B0squaredisc.txt}{here} and \href{http://users.wfu.edu/rouseja/isolated/X3D0squaredisc.txt}{here} for the code.) It remains to
consider subgroups of $9C^{0} \mhyphen 9a$, and these are $9J^{0} \mhyphen 9a$, $9J^{0} \mhyphen9b$
and $9J^{0} \mhyphen9c$. The latter two would give rise to
points on $X_{1}(54)$ of degree 81 or higher, so it suffices to consider the fiber product of $X_{9J^{0} \mhyphen 9a}$ and the curve $j = 1728+t^{2}$. This curve has genus $2$ and is isomorphic to $y^{2} = x^{5} + 4x^{4} + 3x^{3} - x^{2} - x$. The
Jacobian has rank zero and the curve has precisely three rational
points, all of which are cusps. (See \href{http://users.wfu.edu/rouseja/isolated/X9J09asquaredisc.txt}{here} for details.) Therefore, this case does not occur.

Finally, suppose $\Q(E[2]) \cap \Q(E[27])$ is an $S_{3}$-extension of
$\Q$.  Since $x$ must give rise to a point of degree $\leq 27$ on
$X_{1}(54)$, it follows that $E$ must have a point of order $9$ in
degree $1$ or $3$. This forces the $3$-adic image to equal (up to $\pm
I$) $9B^{0} \mhyphen 9a$, $9H^{0} \mhyphen 9b$, $9I^{0} \mhyphen 9a$, $9I^{0} \mhyphen 9b$, $9I^{0} \mhyphen 9c$,
$9J^{0} \mhyphen 9a$, or $27A^{0} \mhyphen 27a$. For each of these cases, let $L
\subseteq \GL_{2}(\Z/54 \Z)$ be the preimage of the mod $27$ image of
Galois. We enumerate index $6$ subgroups $K \subseteq L$ with the
property that the kernel of $\pi_{1} : K \to \GL_{2}(\Z/2\Z)$ has
index $6$ and contains the kernel of $\pi_{2} : K \to
\GL_{2}(\Z/27\Z)$. The elliptic curve $E$ must correspond to a
rational point on one of the curves $X_{K}$. Any $K$ which forces $E$
to give a degree $3$ point on $X_{1}(18)$ and a degree $27$ point on
$X_{1}(54)$ can be ruled out, since the image on $X_{1}(18)$ must be
isolated. In the end we find {at most} one choice of $K$ for each of the
$3$-adic images mentioned above, but all of the modular curves $X_K$ are
covers of the curve for the subgroup $9B^{0} \mhyphen 9a$. (See the \href{http://users.wfu.edu/rouseja/isolated/S3entanglement.txt}{code at the website of the third author for details}. Also note that
the $3$-adic image being contained in $9B^{0} \mhyphen 9a$ is equivalent to saying that $E$ has a $9$-isogeny defined over $\Q$.) For the rest of the
proof, $K$ will denote this particular subgroup of $\GL_{2}(\Z/54
\Z)$. The curve $X_{K}$ has genus $4$, and group theoretic computations indicate
that $X_{K}$ has a map to a genus $1$ curve. Also, there is an element $\vec{v}$
of order $2$ in $(\Z/54 \Z)^{2}$ so that the intersection
of $K$ with the stabilizer of $\vec{v}$ 
is {contained in} $\left\{ \begin{bmatrix} a & b \\ c & d \end{bmatrix} \in
\GL_{2}(\Z/54 \Z) : c \equiv 0 \pmod{54} \right\}$.  This implies that
if $E$ is an elliptic curve with no rational $2$ torsion and $\im \rho_{E,54} \subseteq K$, then the cubic subfield in
$\Q(E[2])$ is the same field over which the curve $E$ acquires a
cyclic isogeny of degree $27$.

According to Magma's small modular curves database, the map from
$x_{9} : X_{0}(9) \to X_{0}(1)$ is given by
\[
j = \frac{(x_{9} + 9)^{3} (x_{9}^{3} + 243x_{9}^{2} + 2187x_{9} + 6561)^{3}}{x_{9}^{9} (x_{9}^{2} + 9x_{9} + 27)}.
\]
The curve $X_{0}(27)$ has equation $y^{2} + y = x^{3} - 7$. Define $\phi : X_{0}(27) \to X_{0}(9)$ by $\phi(x,y) = -3 + (y+5)/x$. Then $x_{9} \circ \phi$ is the map from $X_{0}(27)$ to the $j$-line.
We wish to represent $X_{0}(27)$ as a degree $3$ cover of $X_{0}(9)$ via this map,
and a Groebner basis computation shows that if $x_{9} \in \Q$, the
$x$-coordinate of a preimage of $x_{9}$ under $\phi$ satisfies
\[
  x^{3} - (x_{9}^{2} + 6x_{9} + 9) x^{2} + (9x_{9} + 27) x - 27 = 0.
\]
We make a change of variables, setting $t = 3x - (x_{9} + 3)^{2}$ and obtain
\[
p_{1}(t,x_{9}) = t^{3} - (3x_{9}^{4} + 36x_{9}^{3} 162 x_{9}^{2} + 243x_{9})t -
(2x_{9}^{6} + 36x_{9}^{5} + 270x_{9}^{4} + 999x_{9}^{3} + 1701x_{9}^{2} + 729x_{9}) = 0.
\]
This makes the coefficient of $t^{2}$ equal to $0$. Using an equation for $X_{0}(2)$, one finds that the degree $3$ subfield of $\Q(E[2])$ is given by
$p_{2}(t,x_{9}) = t^{3} - jt - 16j = 0$. We wish to determine the values of
$x_{9}$ for which these two polynomials define isomorphic degree $3$ extensions.

Given two irreducible cubic polynomials in $\Q(x)[t]$ with $p_{1}$
having roots $t_{1}$, $t_{2}$ and $t_{3}$ (in an algebraic closure of
$\Q(x)$) and $p_{2}$ having roots $t_{4}$, $t_{5}$ and $t_{6}$,
suppose $x \in \Q$ has the property that $p_{1}$ and $p_{2}$ define
isomorphic degree $3$ extensions of $\Q$.  Let $L$ be this degree
$3$ extension, and $M$ be its Galois closure.  Fix an ordering of the
roots so that if $\sigma \in \Gal(M/\Q)$ sends $t_{1}$ to $t_{2}$
and $t_{2}$ to $t_{3}$, it sends $t_{4}$ to $t_{5}$ and $t_{5}$ to
$t_{6}$ and view $\Gal(M/\Q)$ as a subgroup of $S_{6}$. It follows
that $\theta_{1} = t_{1} t_{4} + t_{2} t_{5} + t_{3} t_{6} = {\rm
  tr}_{L/\Q}(t_{1} t_{4}) \in \Q$. Let $\theta_{2}$,
$\theta_{3}$, $\theta_{4}$, $\theta_{5}$ and $\theta_{6}$ be the other
elements in the orbit of $\theta_{1}$ under the action of $S_{3}
\times S_{3}$. It follows that $f(t,x) = \prod_{i=1}^{6} (t -
\theta_{i}) \in \Q[t]$ and has a rational root.

A linear algebra calculation carried out in Magma (see \href{http://users.wfu.edu/rouseja/isolated/deg3iso.txt}{here} for details) shows that
if $p_{1}(t,x) = t^{3} + A_{2} t + A_{3}$ and $p_{2}(t,x) = t^{3} + B_{2} t + B_{3}$,
then
\[
f(t,x) = t^{6} - 6 A_{2} B_{2} t^{4} - 27 A_{3} B_{3} t^{3} + 9 A_{2}^{2} B_{2}^{2}
t^{2} + 81 A_{2} A_{3} B_{2} B_{3} t - 4A_{2}^{3} B_{2}^{3} - 27 A_{2}^{3} B_{3}^{2}
- 27 A_{3}^{2} B_{2}^{3}.
\]
We apply this to the two polynomials $p_{1}(t,x_{9})$ and
$p_{2}(t,x_{9})$ stated above. This gives rise to an equation involving
$t$ and $x_{9}$ which has degree $41$ in $x_{9}$ and degree $6$
in $t$. We wish to find all of the rational points on the curve defined by this equation. Using the methods in van Hoeij and Novocin's preprint \cite{vanHoeijNovocin}, we are able to find a much
simpler polynomial that defines the same function field. We find the
polynomial
\[
X : t^{6} + (-2x_{9}^{3} - 18x_{9}^{2} - 54x_{9}) t^{3} + x_{9}^{6} + 18x_{9}^{5}
+ 135x_{9}^{4} + 513x_{9}^{3} + 972x_{9}^{2} + 729x_{9} = 0.
\]
The map $(t,x_{9}) \mapsto (t^{3},x_{9})$ is clearly a map to the curve
\[
Y : y^{2} + (-2x_{9}^{3} - 18x_{9}^{2} - 54x_{9}) y + x_{9}^{6} + 18x_{9}^{5} + 135x_{9}^{4} + 513x_{9}^{3} + 972x_{9}^{2} + 729x_{9} = 0
\]
This curve $Y$ has genus $1$ and is isomorphic to $y^{2} = x^{3} + 1$. This elliptic curve has rank zero and Mordell-Weil group $\Z/6\Z$. The six rational points on $Y$ are $(-324 : -9 : 1)$, $(0 : 0 : 1)$, $(1 : 0 : 0)$,
$(-162 : -9 : 1)$, $(0 : -3 : 1)$, and $(-54 : -3 : 1)$. Of these six points,
two are rational cusps, two have image $j = 0$, and two have image $j = -2^{15} \cdot 3 \cdot 5^{3}$. Only two of the rational points on $Y$ lift to rational points on $X$, and those are the rational cusps. \href{http://users.wfu.edu/rouseja/isolated/genus4.txt}{A script documenting this computation can be found at the website of the third author.}
\end{proof}

\subsection{Non-CM isolated points of odd degree}

\begin{thm}\label{Thm: nonCM}
Let $\mathcal{I}_{odd}$ denote the set of all isolated points of odd degree on all modular curves $X_1(N)$ for $N \in \mathbb{Z}^+$. Then the non-CM $j$-invariants in $j(\mathcal{I}_{odd}) \cap \Q$ are $-3^2\cdot 5^6/2^3$ and $3^3\cdot 13/2^2$.
\end{thm}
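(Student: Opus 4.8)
The plan is to prove the two assertions separately: the ``only these two'' direction is an assembly of the reduction results established in this section, while the converse requires exhibiting explicit isolated points. First I would let $x = [E,P] \in X_1(n)$ be an arbitrary isolated point of odd degree with $j(x) \in \Q$ corresponding to a non-CM elliptic curve, and invoke Theorem \ref{IsolPtsThm} to conclude that either $n = 2^a 3^b$ for nonnegative integers $a,b$, or else $j(x) \in \{-3^2\cdot 5^6/2^3,\, 3^3\cdot 13/2^2\}$. In the former case, Proposition \ref{ReductionProp} shows that $x$ maps to an isolated point on $X_1(54)$ or $X_1(162)$. Since the $j$-invariant is preserved by this map, the image point is again non-CM with rational (hence finite, so non-cuspidal) $j$-invariant; and because the reductions in Proposition \ref{ReductionProp} are degree-multiplicative (the degree of $x$ equals the degree of the relevant map times the degree of its image), the degree of the image on $X_1(54)$ or $X_1(162)$ divides $\deg(x)$ and is therefore odd. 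This image point then contradicts Proposition \ref{54prop}, so the case $n = 2^a 3^b$ cannot occur. Hence every non-CM $j$-invariant in $j(\mathcal{I}_{odd}) \cap \Q$ lies in the two-element set above.

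For the converse I would show each of the two values actually arises. For $-3^2\cdot 5^6/2^3$ this is immediate: Najman \cite{najman16} exhibits an isolated point of degree $3$ on $X_1(21)$ with this $j$-invariant, and as $3$ is odd the point lies in $\mathcal{I}_{odd}$. For $3^3\cdot 13/2^2$ I would produce an explicit point $x \in X_1(28)$ of degree $9$ above this $j$-invariant and certify that it is isolated. Concretely, I would fix a rational elliptic curve $E$ with $j(E) = 3^3\cdot 13/2^2$, locate a point $P$ of order $28$ whose residue field $\Q(j(E), \mathfrak{h}(P))$ has degree $9$ over $\Q$ (using the Weber-function description of $\S2.4$ together with a division-polynomial computation), and then compute the Riemann--Roch space $L(x)$ of the corresponding effective degree-$9$ divisor. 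Verifying by machine that $\dim L(x) = 1$ shows $x$ is $\mathbb{P}^1$-isolated, and since $X_1(28)$ has genus $10$ with Jacobian of rank $0$ over $\Q$, there is no positive-rank abelian subvariety of $\operatorname{Jac}(X_1(28))$, so $x$ is automatically AV-isolated. Thus $x$ is isolated of odd degree, placing $3^3\cdot 13/2^2$ in $j(\mathcal{I}_{odd}) \cap \Q$.

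The conceptually demanding input---resolving the levels $X_1(54)$ and $X_1(162)$ through the genus $4$ entanglement curve---has already been carried out in Proposition \ref{54prop}, so the ``only these two'' direction here reduces to careful bookkeeping; the one subtle point is confirming that the image point retains all the hypotheses (odd degree, non-cuspidal, non-CM, rational $j$) needed to apply Proposition \ref{54prop}, which I have isolated above as a consequence of the degree-multiplicativity built into the earlier reductions. I expect the main obstacle to be the converse direction for $3^3\cdot 13/2^2$: unlike the Najman point, this point sits on a positive-genus curve where finiteness of degree-$9$ points is not automatic, so isolation must be established directly. The computation $\dim L(x) = 1$ rules out membership in a $\mathbb{P}^1$-parametrized family, while the rank-$0$ Jacobian rules out an abelian-variety family; pinning down the degree-$9$ point and rigorously verifying its Riemann--Roch space is the crux of this final step.
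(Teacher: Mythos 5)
Your proposal is correct and follows essentially the same route as the paper: the containment is exactly the assembly of Theorem \ref{IsolPtsThm}, Proposition \ref{ReductionProp}, and Proposition \ref{54prop}, and the converse is handled by Najman's degree $3$ point on $X_1(21)$ together with a degree $9$ point on $X_1(28)$ (the curve 338.e2) certified via a one-dimensional Riemann--Roch space and the rank $0$ Jacobian. The only difference is an implementation detail: the paper verifies $\dim L(D)=1$ after reducing the divisor modulo $11$, using the fact that principal divisors specialize to principal divisors at primes of good reduction, whereas you propose the computation directly over $\Q$; both suffice.
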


\begin{proof}
  The fact that $j(\mathcal{I}_{odd}) \cap \Q \subseteq \{-3^2\cdot 5^6/2^3, 3^3\cdot 13/2^2\}$ follows from Theorem \ref{IsolPtsThm}, Proposition \ref{ReductionProp}, and Proposition \ref{54prop}. It remains to show that these two $j$-invariants correspond to isolated points of odd degree. By work of Najman \cite{najman16}, there is an isolated (in fact, sporadic) point $x \in X_1(21)$ with $\deg(x)=3$ and $j(x)=-3^2\cdot 5^6/2^3$. We have also identified a degree 9 point $x \in X_1(28)$ corresponding to the elliptic curve $E$ with LMFDB label 338.e2. Since the Jacobian of $X_1(28)$ has rank 0 \cite[Lemma 1]{DerickxVanHoeij}, it suffices to show $x$ is $\mathbb{P}^1$-isolated. We use the model of $X_{1}(28)$ computed by Sutherland \cite{SutherlandX1} (see Table 6). The universal elliptic curve has the form $E_{u} : y^{2} + xy + uy = x^{3} + ux^{2}$ for some
  $u \in \Q(X_{1}(28))$. We first find the choices of $u$ in the degree $9$
  number field $\Q(x)$ for which $j(E_{u}) = 3^{3} \cdot 13/2^{2}$. There are two such,
  but only one gives points in the desired degree $9$ number field. In the end, we find $6$ points on $X_{1}(28)$ over the desired number field that are interchanged by diamond automorphisms and choose one of them to create a degree $9$ divisor $D$
  over $\Q$ on $X_{1}(28)$. Since the natural reduction of a principal divisor is principal over any prime of good reduction \cite[Thm. 9.5.1]{BLR90}, it suffices to show that the Riemann-Roch space $L(\tilde{D})$ over $\mathbb{F}_{11}$ is one-dimensional. This can be verified in Magma; \href{http://users.wfu.edu/rouseja/isolated/X128.txt}{see the website of the third author for the Magma code used.} Thus there are no non-constant functions $f : X_{1}(28) \to \mathbb{P}^{1}$ over $\Q$ with poles only at $D$ and so the degree $9$ point on $X_{1}(28)$ is isolated. 
\end{proof}

\section{The CM Case}

{In this section we show that any CM $j$-invariant in $j(\mathcal{I}_{odd}) \cap \Q$ belongs to the set  
\[
\{-2^{18}3^35^3, \, -2^{15}3^35^311^3, \, -2^{18}3^35^323^329^3\},
\] completing the proof of Theorem \ref{MainThm}. These are the elliptic curves with CM by the orders of discriminant $-43, -67, -163$, respectively.} Our results follow from work of the first author and Clark \cite{BC1}, \cite{BC2}.

\subsection{Preliminaries on Cartan orbits} We first recall the necessary ingredients from \cite[$\S7$]{BC1}. Let $\OO$ be an order in an imaginary quadratic field and let $N$ be a positive integer. If $P \in \OO/N\OO$ is a point of order $N$ {(which by $\S2.6$ corresponds to a point of order $N$ on an $\OO$-CM elliptic curve)}, then define $M_P \coloneqq  \{xP \mid x \in \OO\}$ to be the $\OO$-submodule of $\OO/N\OO$ generated by $P$ and $I_P \coloneqq \{ x \in \OO \mid xP =0\}$. There is a canonical $\OO$-module isomorphism
\[
M_P \cong \OO/I_P
\]
defined by $P \mapsto 1 + I_P$. We may use this isomorphism to determine the size of the $(\OO/N\OO)^{\times}$-orbit on $P$. Recall we denote $(\OO/N\OO)^{\times}$ by $C_N(\OO)$.

\begin{lem} \label{orbit1}
Let $p$ be an odd prime, and let $\OO$ be an imaginary quadratic order of discriminant $\Delta$ such that $\leg{\Delta}{p} =0$. Let $P \in \OO/p^a\OO$ be a point of order $p^a$. Then as abelian groups, $M_P \cong \Z/p^a\Z \times \Z/p^{b}\Z$ for some integers $0 \leq b \leq a$, and the $C_{p^a}(\OO)$-orbit on $P$ has size $p^{a+b-1}(p-1)$.
\end{lem}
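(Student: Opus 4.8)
The plan is to exploit the hypothesis $\leg{\Delta}{p}=0$ to show that the finite commutative ring $R \coloneqq \OO/p^a\OO$ is \emph{local}, and then to deduce both assertions from the canonical isomorphism $M_P \cong \OO/I_P$ together with an orbit--stabilizer count for the action of $C_{p^a}(\OO)=R^\times$ on $P$. Throughout I identify $I_P$ with its image $A \coloneqq \operatorname{Ann}_R(P)$ in $R$, so that $M_P \cong R/A$ and $|M_P| = |R|/|A|$.

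First I would establish the locality of $R$, which is where the ramification hypothesis enters. Writing $\OO = \Z[\omega]$ with $\omega$ a root of a monic quadratic $f(x)$ of discriminant $\Delta$, we have $R \cong (\Z/p^a\Z)[x]/(f(x))$. Since $p$ is odd and $p \mid \Delta$, the reduction $f \bmod p$ has a repeated root, so $\OO/p\OO \cong \F_p[x]/(x^2)$ is local with residue field $\F_p$; as the maximal ideals of $R$ are exactly those of $\OO$ containing $p$, it follows that $R$ is local, say with maximal ideal $\mathfrak{m}$, and $|R^\times| = p^{2a}-p^{2a-1}=p^{2a-1}(p-1)$. I would emphasize that this holds uniformly whether or not $p$ divides the conductor $\ff$: the computation takes place in the (possibly non-maximal) order $\OO$ rather than in $\Ok$, and the repeated root of $f \bmod p$ appears in either case.

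Next, using the isomorphism $M_P \cong R/A$, I would note that $M_P$ is a subgroup of the additive group $R \cong (\Z/p^a\Z)^2$ containing the element $P$ of order $p^a$; hence its invariant factors force $M_P \cong \Z/p^a\Z \times \Z/p^b\Z$ for some $0 \leq b \leq a$, which is the first assertion, and $|M_P| = p^{a+b}$. Comparing cardinalities in $R/A \cong M_P$ then gives $|A| = |R|/|M_P| = p^{2a}/p^{a+b} = p^{a-b}$.

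Finally I would run orbit--stabilizer. The stabilizer of $P$ in $R^\times$ is $\{u \in R^\times : (u-1)P = 0\} = R^\times \cap (1+A)$; since $P \neq 0$ forces $A \subseteq \mathfrak{m}$, every element of $1+A$ already lies in $R^\times$, so the stabilizer equals $1+A$, a set of size $|A| = p^{a-b}$. Therefore the $C_{p^a}(\OO)$-orbit of $P$ has size $|R^\times|/p^{a-b} = p^{2a-1}(p-1)/p^{a-b} = p^{a+b-1}(p-1)$, as claimed. The only genuinely delicate point is the locality of $R$ in the second step, together with the attendant observation $A \subseteq \mathfrak{m}$; once these are in hand everything reduces to bookkeeping with cardinalities, so I expect the locality argument to be the main (if modest) obstacle.
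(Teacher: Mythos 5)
Your proof is correct and follows essentially the same route as the paper's: both rest on the observation that $\leg{\Delta}{p}=0$ makes the relevant quotient of $\OO$ local with residue field $\F_p$, combined with the identification $M_P \cong \OO/I_P$ and a count of units. The only difference is that the paper cites \cite[Lemmas 7.4 and 7.5]{BC1} for the structure of $M_P$ and for the identification of the orbit with $(\OO/I_P)^{\times}$, whereas you reprove these in a self-contained way (invariant factors of subgroups of $(\Z/p^a\Z)^2$, and orbit--stabilizer via the stabilizer $1+\operatorname{Ann}_R(P)$), which amounts to the same computation.
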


\begin{proof}
The fact that $M_P \cong_{\Z} \Z/p^a\Z \times \Z/p^{b}\Z$ for some $0 \leq b \leq a$ is Lemma 7.5 in \cite{BC1}. Since $\leg{\Delta}{p} =0$, $\OO/I_P$ is local with residue field $\Z/p\Z$. Thus 
\[\# (\OO/I_P)^{\times}=\#\OO/I_P-\frac{\#\OO/I_P}{p}=p^{a+b-1}(p-1).
\] By \cite[Lemma 7.4]{BC1}, the size of the $C_N(\OO)$-orbit on $P$ is equal to the size of $(\OO/I_P)^{\times}$.
\end{proof}

\begin{lem} \label{orbit2}
Let $p$ be an odd prime, and let $\OO$ be an imaginary quadratic order of discriminant $\Delta$ such that $\leg{\Delta}{p}\neq 1$. Let $P \in \OO/p^a\OO$ be a point of order $p^a$ for $a \in \Z^+$. Suppose for some integer $0 \leq m \leq a$ the $C_{p^m}(\OO)$-orbit on $p^{a-m}P  \in \OO/p^m\OO$ has size greater than $\varphi(p^m)$. Then the size of the $C_{p^a}(\OO)$-orbit on $P$ is equal to $p^{2(a-m)}$ times the size of the $C_{p^m}(\OO)$-orbit on $p^{a-m}P$.
\end{lem}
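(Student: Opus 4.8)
The plan is to translate both orbit sizes into orders of unit groups of finite local rings via \cite[Lemma 7.4]{BC1}, and then to compare those orders by an elementary count. Let $I_P := \{x \in \OO : xP = 0 \text{ in } \OO/p^a\OO\}$ be the annihilator of $P$, so that \cite[Lemma 7.4]{BC1} gives that the $C_{p^a}(\OO)$-orbit of $P$ has size $\#(\OO/I_P)^\times$. The point $p^{a-m}P$ lies in the $p^m$-torsion submodule $(\OO/p^a\OO)[p^m]$, which is identified with $\OO/p^m\OO$ as an $\OO$-module compatibly with the reduction $C_{p^a}(\OO) \to C_{p^m}(\OO)$; since $x \cdot (p^{a-m}P) = p^{a-m}(xP)$, its annihilator is $J := (I_P : p^{a-m}) = \{x \in \OO : p^{a-m}x \in I_P\}$. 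Applying \cite[Lemma 7.4]{BC1} a second time, the $C_{p^m}(\OO)$-orbit of $p^{a-m}P$ has size $\#(\OO/J)^\times$, so the assertion reduces to the ring-theoretic identity $\#(\OO/I_P)^\times = p^{2(a-m)}\,\#(\OO/J)^\times$.

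The first step is to exploit the hypothesis $\leg{\Delta}{p} \neq 1$, which is exactly the condition that $\OO/p\OO$ be local: it is the field $\F_{p^2}$ when $\leg{\Delta}{p} = -1$ and a local ring with residue field $\F_p$ when $\leg{\Delta}{p} = 0$, failing to be local only in the split case $\leg{\Delta}{p} = 1$. Hence $\OO_p = \OO \otimes \Z_p$ is local, and since $I_P, J \supseteq p^a\OO$ the quotients $\OO/I_P$ and $\OO/J$ are local as well. The hypothesis forces $m \geq 1$ (for $m = 0$ the orbit of $p^aP = 0$ has size $1 = \varphi(1)$), so $p^{a-m}P \neq 0$ and $J \subsetneq \OO$; thus $J/I_P$ is a proper ideal of the local ring $\OO/I_P$ and so lies in its maximal ideal $\mathfrak{m}$. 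The surjection of local rings $\OO/I_P \twoheadrightarrow \OO/J$ is therefore surjective on unit groups with kernel $1 + J/I_P$, yielding
\[
\frac{\#(\OO/I_P)^\times}{\#(\OO/J)^\times} = \#(J/I_P) = \#\big((\OO/I_P)[p^{a-m}]\big),
\]
the last equality because $J/I_P = (I_P : p^{a-m})/I_P$ is precisely the kernel of multiplication by $p^{a-m}$ on $\OO/I_P$.

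Next I would compute this torsion group. As a subgroup of $\OO/p^a\OO \cong (\Z/p^a\Z)^2$ that contains the order-$p^a$ element $P$, the module $\OO/I_P \cong \OO P$ has the form $\Z/p^a\Z \times \Z/p^b\Z$ for some $0 \leq b \leq a$ (cf. \cite[Lemma 7.5]{BC1}; it is immediate from the structure theorem, with first invariant factor $p^a$ because $P$ has order $p^a$). Consequently $\#\big((\OO/I_P)[p^{a-m}]\big) = p^{(a-m) + \min(a-m,\,b)}$, and the proof will be complete once the size hypothesis is shown to guarantee $\min(a-m, b) = a-m$, i.e. $b \geq a-m$.

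This last implication is the crux of the argument and the only place where the inequality on orbit sizes is used. I would prove it by contraposition: if $b < a-m$, then the displayed ratio shows $\#(\OO/J) = p^{a+b}/p^{(a-m)+b} = p^m$, so the local ring $\OO/J$ has cyclic additive group $\Z/p^m\Z$. Its residue field is an additive quotient of this cyclic group, hence cyclic; since $\F_{p^2}$ has noncyclic additive group $(\Z/p\Z)^2$, the residue field must be $\F_p$, and therefore $\#(\OO/J)^\times = p^m - p^{m-1} = \varphi(p^m)$, contradicting the assumption that the $C_{p^m}(\OO)$-orbit of $p^{a-m}P$ has size exceeding $\varphi(p^m)$. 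Thus $b \geq a-m$, giving $\#(J/I_P) = p^{2(a-m)}$ and the desired equality. The subtlety to keep an eye on is the case $p \mid \ff$, where $\OO_p$ is a non-maximal order rather than a discrete valuation ring; but the argument only ever uses that $\OO_p$ is local and the identification of $J/I_P$ with a torsion subgroup, both of which persist, so no separate treatment of the conductor is required.
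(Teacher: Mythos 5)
Your proof is correct, and it reaches the conclusion by a genuinely different route from the paper's. The paper splits into cases on $\leg{\Delta}{p}$: when $\leg{\Delta}{p}=-1$ it simply cites \cite[Theorem 7.8]{BC1}, and when $\leg{\Delta}{p}=0$ it applies Lemma \ref{orbit1} (the explicit orbit-size formula $p^{a+b-1}(p-1)$ attached to $M_P \cong \Z/p^a\Z \times \Z/p^b\Z$) to both $P$ and $p^{a-m}P$, uses the size hypothesis to force the second invariant $b'$ of $M_{p^{a-m}P}$ to satisfy $b' \geq 1$, and compares the invariants via $p^{a-m}M_P = M_{p^{a-m}P}$. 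You instead run a single uniform argument: both orbit sizes become orders of unit groups of the finite local rings $\OO/I_P$ and $\OO/J$ by \cite[Lemma 7.4]{BC1}, the ratio is identified with $\#(J/I_P) = \#\bigl((\OO/I_P)[p^{a-m}]\bigr)$ via the surjection on units of local rings, and the hypothesis enters only through the contrapositive observation that $b < a-m$ would make $\OO/J$ additively cyclic of order $p^m$, hence have residue field $\F_p$ and exactly $\varphi(p^m)$ units. I checked the individual steps --- the identification of the annihilator of $p^{a-m}P$ with $(I_P : p^{a-m})$, the kernel $1 + J/I_P$ of the unit-group surjection, and the torsion count $p^{(a-m)+\min(a-m,b)}$ --- and they are sound, including when $p \mid \ff$. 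What your approach buys is the elimination of the case split and of the appeal to \cite[Theorem 7.8]{BC1}, making the lemma essentially self-contained modulo \cite[Lemmas 7.4 and 7.5]{BC1}; what it costs is re-deriving by hand the counting that Lemma \ref{orbit1} already packages, and it yields only $b \geq a-m$ where the paper's route gives the marginally sharper $b \geq a-m+1$ (either suffices for the stated equality).
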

\begin{proof} If $\leg{\Delta}{p}=-1$, this follows from \cite[Theorem 7.8]{BC1}, so henceforth we may assume $\leg{\Delta}{p}=0$. Following \cite[$\S7$D]{BC1}, we observe that $x \mapsto p^{a-m}x$ gives an $\OO$-module isomorphism
\[
\OO/p^m\OO \rightarrow p^{a-m}\OO/ p^a \OO.
\]
This allows us to view $\OO/p^m\OO$ as an $\OO$-submodule of $\OO/ p^a \OO$. Since we have assumed the $C_{p^m}(\OO)$-orbit on $p^{a-m}P  \in \OO/p^m\OO$ has size greater than $\varphi(p^m)$, Lemma \ref{orbit1} shows
\begin{align*}
M_P \cong_{\Z} \Z/p^a\Z \times \Z/p^{b}\Z,\\
 M_{p^{a-m}P} \cong_{\Z} \Z/p^m\Z \times \Z/p^{b'}\Z
\end{align*}
for some $0 \leq b \leq a$ and $1 \leq b' \leq m$. Since $p^{a-m} M_P=M_{p^{a-m}P}$, we see that $b=b'+a-m$. Another application of Lemma \ref{orbit1} shows that $P$ lies in a $C_{p^{a}}(\OO)$-orbit of size $p^{2(a-m)}$ times the size of the $C_{p^m}(\OO)$-orbit  on $p^{a-m}P$. 
\end{proof}

\subsection{CM version of Theorem \ref{isogenyTHM}} The following theorem shows that, as in the case of non-CM elliptic curves over $\Q$, points on $X_1(p^a)$ corresponding to a CM elliptic curve $E$ with a rational cyclic $p$-isogeny over $\Q(j(E))$ often arise in largest possible degree allowed by the isogenies.
For relevant background information on CM elliptic curves, see Section \ref{CMsection}.

\begin{prop} \label{CMprop}
Let $p$ be an odd prime and let $E$ be a $K$-CM elliptic curve. Define $m$ to be the maximum integer such that there exists $y \in X_0(p^m)(\Q(j(E))$ with $j(y)=j(E)$.\footnote{For the explicit values of $m$, see Propositions 6.4 and 6.8 in \cite{BC2}. These can mostly be deduced from \cite{kwon99}.}  If $m \geq 1$ and $\leg{\Delta_K}{p} \neq 1$, then for any integer $a >m$ and any point $x \in X_1(p^a)$ with $j(x)=j(E)$, we have
\[
\deg(x)=\deg(f(x))\cdot \deg(f),
\]
where $f: X_1(p^a) \rightarrow X_1(p^m)$ is the natural map.
\end{prop}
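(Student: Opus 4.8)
The plan is to compute each side explicitly and compare. By Proposition~\ref{prop:Degree} applied to $f\colon X_1(p^a)\to X_1(p^m)$ (so that $b=p^{a-m}$ and $a=p^m$ in the notation there), the only prime dividing $b$ also divides $a$, and $p^m>2$ forces $c_f=1$; hence $\deg(f)=p^{2(a-m)}$. Thus it suffices to prove $[\Q(x):\Q(f(x))]=p^{2(a-m)}$. Writing $x=[E,P]$, so that $f(x)=[E,p^{a-m}P]$, the residue fields are $\Q(j(E))(\mathfrak{h}(P))\supseteq \Q(j(E))(\mathfrak{h}(p^{a-m}P))$ by the Weber-function description in \S2.4. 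I would pass to $K(j(E))$ and invoke \S2.6, which identifies $[K(j(E))(\mathfrak{h}(Q)):K(j(E))]$ with the size of the reduced Cartan orbit $\overline{C_{p^i}(\OO)}\cdot \overline{Q}$; in particular the image of $\Gal_{K(j(E))}$ is the full reduced Cartan. The degree over $\Q(j(E))$ exceeds the degree over $K(j(E))$ by a factor of $2$ exactly when no reflection in the normalizer of the Cartan fixes $\overline{Q}$, so the heart of the reduction is to show that this reflection factor agrees for $P$ and $p^{a-m}P$ and therefore cancels in the ratio.

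I would first fix the splitting type of $p$. The hypothesis $\leg{\Delta_K}{p}\neq 1$ excludes split $p$, and if $p$ were inert with $p\nmid \ff$ then $E[p]\cong \OO/p\OO$ would be a simple module, so $E$ would admit no rational cyclic $p$-isogeny and $m=0$, contradicting $m\geq 1$. Hence $\leg{\Delta}{p}=0$, i.e.\ $p$ is ramified in $\OO$. Ramification gives $\overline{\pp}=\pp$, so for any $Q$ of $p$-power order the ratio $\overline{Q}/Q$ is a unit; this makes each reduced Cartan orbit stable under the conjugation reflection, whence the $\Gal_{\Q(j(E))}$-orbit (which is the full normalizer orbit, since the Cartan image is full) has the same cardinality as the reduced Cartan orbit. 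Consequently the reflection factor is trivial for every point of $p$-power order, and $[\Q(x):\Q(f(x))]$ equals the ratio of the two reduced Cartan orbit sizes.

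It remains to compute this ratio. Writing $M_P\cong \Z/p^a\Z\times \Z/p^{b}\Z$ as in Lemma~\ref{orbit1} and setting $b'=\max(b-(a-m),0)$, a short computation gives $M_{p^{a-m}P}=p^{a-m}M_P\cong \Z/p^m\Z\times\Z/p^{b'}\Z$, so the reduced orbits of $\overline{P}$ and $\overline{p^{a-m}P}$ have sizes $\tfrac12 p^{a+b-1}(p-1)$ and $\tfrac12 p^{m+b'-1}(p-1)$, and the ratio is $p^{(a-m)+(b-b')}$. The crucial inequality is $b\geq a-m$: if instead $b<a-m$, then $p^{a-m-1}P$ would generate a cyclic $\OO$-module of order $p^{m+1}$, i.e.\ a $\Gal_{K(j(E))}$-stable cyclic subgroup of that order; being a power of the ramified prime it is conjugation-stable, hence descends to a rational cyclic $p^{m+1}$-isogeny over $\Q(j(E))$, contradicting the maximality of $m$. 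Granting $b\geq a-m$ we have $b'=b-(a-m)$, so $b-b'=a-m$ and the ratio is exactly $p^{2(a-m)}$. When $b>a-m$ this is precisely Lemma~\ref{orbit2}, whose hypothesis (orbit size greater than $\varphi(p^m)$) is equivalent to $b'\geq 1$; the boundary case $b=a-m$ lies just outside Lemma~\ref{orbit2} and I would dispatch it by the direct orbit computation above.

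The main obstacle is the inequality $b\geq a-m$, which is exactly the assertion that $m$ is the maximal isogeny exponent; making it rigorous for nonmaximal orders $(p\mid\ff)$ relies on the structural computations behind Propositions 6.4 and 6.8 of \cite{BC2}. A secondary technical point is the descent from $K(j(E))$ to $\Q$ when $j(E)\in\{0,1728\}$, where $\OO^{\times}$ is strictly larger than $\{\pm 1\}$: there the factor $\tfrac12$ must be replaced by the appropriate unit quotient, and one must check that $P$ and $p^{a-m}P$ have the same $\OO^{\times}$-stabilizer so that this factor again cancels.
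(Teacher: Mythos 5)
Your overall strategy tracks the paper's quite closely: both proofs reduce the statement to showing that the ratio of Cartan orbit sizes for $P$ and $p^{a-m}P$ equals $p^{2(a-m)}$, both first pin down $\leg{\Delta}{p}=0$, and both split into the case covered by Lemma \ref{orbit2} and a boundary case where that lemma's hypothesis fails. One genuine simplification you should adopt from the paper: the entire ``reflection factor'' discussion is unnecessary. Once the $K(j(E))$-degree ratio is shown to equal $p^{2(a-m)}$, the $\Q(j(E))$-degree ratio is sandwiched between it and $\deg(f)=p^{2(a-m)}$ (the lower bound because $[K(j(E))(\mathfrak{h}(P)):K(j(E))(\mathfrak{h}(p^{a-m}P))]\leq[\Q(j(E))(\mathfrak{h}(P)):\Q(j(E))(\mathfrak{h}(p^{a-m}P))]$, the upper bound because the extension sits inside a fiber of $f$), so equality is automatic and no comparison of stabilizers under complex conjugation is needed.

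The genuine gap is in your treatment of the boundary case, i.e., the inequality $b\geq a-m$. You derive it by producing, from the assumption $b<a-m$, a cyclic $\OO$-submodule of order $p^{m+1}$ and asserting that it descends to a $\Q(j(E))$-rational cyclic $p^{m+1}$-isogeny, contradicting the maximality of $m$. But the subgroup you produce is a priori only $\Gal_{K(j(E))}$-stable, whereas $m$ is defined via $X_0(p^{m+1})(\Q(j(E)))$; the needed stability under complex conjugation is exactly the assertion you leave unproved (``being a power of the ramified prime it is conjugation-stable''), and it is not formal when $p\mid\ff$, where $\OO\otimes\Z_p$ is not a discrete valuation ring and its ideals need not all be conjugation-stable powers of the maximal ideal. (When $p\nmid\ff$ the issue is moot for a different reason: $\OO\otimes\Z_p$ is then a DVR with uniformizer $\pi$ satisfying $\pi^2\sim p$, so every cyclic $\OO$-submodule of $E[p^a]$ has order at most $p$ and one gets $b\geq a-1\geq a-m$ with no isogeny argument at all.) The paper closes this case differently: in the boundary case the $C_{p^a}(\OO)$-orbit of $P$ would have size strictly less than $p^{2a-m-1}(p-1)$, and this is ruled out by the lower bound of \cite[Theorem 7.2]{BC1} together with the explicit values of $m$ from \cite[Proposition 6.4]{BC2}. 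As written, your proof of the crucial inequality rests on an unproved claim that you yourself flag as the main obstacle, so either that claim must be established for the finitely many relevant orders with $p\mid\ff$, or the appeal to the cited degree bounds must be made explicitly.
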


\begin{proof} Let $x=[E,P] \in X_1(p^a)$. The assumption that $m \geq 1$ means there exists a model of $E/\Q(j(E))$ with a rational cyclic $p$-isogeny. Since $p$ is odd, $\leg{\Delta}{p}=0$ by Proposition 6.8 of \cite{BC2}. Furthermore, if $\Delta=-3$, then $m=2$; see, for example, \cite[Corollary 5.11]{BC2}.

As in the proof of Lemma \ref{orbit2}, we may identify $P$ with an element of $\OO/p^a\OO$ of order $p^a$ and $p^{a-m}P$ with an element of $\OO/p^m\OO$ of order $p^m$. Suppose first that the $C_{p^m}(\OO)$-orbit on $p^{a-m}P$ has size greater than $\varphi(p^m)$. Thus by \cite[Lemma 7.6]{BC1} and Lemma \ref{orbit2} we have the lower bound
\[
[K(\mathfrak{h}(P)):K(\mathfrak{h}(p^{a-m}P))]=p^{2(a-m)}\leq [\Q(\mathfrak{h}(P)):\Q(\mathfrak{h}(p^{a-m}P))].
\] 
Since $p^{2(a-m)}=\deg(X_1(p^a) \rightarrow X_1(p^{m}))$, we also have the upper bound
\[
[\Q(\mathfrak{h}(P)):\Q(\mathfrak{h}(p^{a-m}P))] \leq p^{2(a-m)}
\] 
Thus equality holds, and $\deg(x)=\deg(f(x))\cdot \deg(f)$.

So suppose the $C_{p^{m}}(\OO)$-orbit of $p^{a-m}P$ has size less than or equal to $\varphi(p^{m})$.\footnote{In fact, this implies the orbit size must be exactly $\varphi(p^{m})$ since $C_{p^m}(\OO)$ contains all scalar matrices.} Suppose for the sake of contradiction that the size of the $C_{p^{a}}(\OO)$-orbit on $P$ is strictly smaller than $p^{2(a-m)}$ times the size of the $C_{p^m}(\OO)$-orbit  on $p^{a-m}P$. Then the $C_{p^{a}}(\OO)$-orbit on $P$ has size less than
\[
p^{2(a-m)}\cdot \varphi(p^m)=p^{2a-m-1}(p-1).
\]
With the values of $m$ given in \cite[Proposition 6.4]{BC2}, we find this contradicts \cite[Theorem 7.2]{BC1} since we have also assumed $\leg{\Delta_K}{p} \neq 1$. Thus the $C_{p^{a}}(\OO)$-orbit on $P$ is equal to $p^{2(a-m)}$ times the size of the $C_{p^m}(\OO)$-orbit  on $p^{a-m}P$, and the argument follows as before. 
\end{proof}

\begin{rmk}
The statement of Proposition \ref{CMprop} does not hold if $\leg{\Delta_K}{p} =1$. In this case, for such a $K$-CM elliptic curve $E$, there exists $y' \in X_1(p^{M})(K(j(E))$ with $j(y)=j(E)$ for all $M \in \Z^+$. See Proposition 6.4 in \cite{BC2}. These extra isogenies picked up over $K(j(E))$ prevent the degree condition of Proposition \ref{CMprop} from being satisfied, and they may be used to produce sporadic points associated to any CM $j$-invariant. See Theorem 7.1 in \cite{BELOV}.
\end{rmk}

\subsection{Isolated CM points of odd degree}

There are 13 CM $j$-invariants in $\Q$ corresponding to imaginary quadratic orders of discriminant 
\[
\Delta\in \{-3, -4, -7, -8, -11, -12, -16, -19, -27, -28, -43, -67, -163\}.
\] 
For most of these, we can show there is no corresponding isolated point in odd degree using the following theorem.

\begin{thm} \label{CMmapThm}
Let $x \in X_1(N)$ be an isolated point of odd degree corresponding to an elliptic curve $E$ with CM by the order in $K$ of discriminant $\Delta$. Then $K=\Q(\sqrt{-p})$ for a prime $p \equiv 3 \pmod{4}$ and $N=p^r$ or $2p^r$. Moreover, if $m$ is the maximum integer such that there exists $y \in X_0(p^m)(\Q(j(E))$ with $j(y)=j(E)$, then:
\begin{enumerate}
\item If $\leg{\Delta}{2}=-1$, then $f(x) \in X_1(\gcd(N,p^m))$ is isolated where $f: X_1(N) \rightarrow X_1(\gcd(N,p^m))$ is the natural map.
\item If $\leg{\Delta}{2}\neq-1$, then $f(x) \in X_1(\gcd(N,2p^m))$ is isolated where $f: X_1(N) \rightarrow X_1(\gcd(N,2p^m))$ is the natural map.
\end{enumerate} 
\end{thm}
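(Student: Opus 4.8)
The plan is to first fix the shape of $N$, $K$, and $p$, and then verify the hypothesis $\deg(x)=\deg(f)\deg(f(x))$ of Theorem \ref{LevelLowering} for the stated map $f$, so that isolation descends. For the shape I would build on the argument in the proof of Theorem \ref{OddDegThm}: Aoki \cite[Cor. 9.4]{aoki95} gives $K=\Q(\sqrt{-p})$ for an odd prime $p$ and $N=2^ap^b$ with $a\leq 1$ whenever $b>0$. Since $\Q(j(E))\subseteq \Q(x)$ and $[\Q(j(E)):\Q]=h(\OO)$, the oddness of $\deg(x)$ forces $h(\OO)$ to be odd; by genus theory this means $\Delta_K$ has a single prime divisor, so $\Delta_K=-p$ and $p\equiv 3\pmod 4$. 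Finally, genus $0$ curves carry no isolated points, so $X_1(2^a)$ with $a\leq 2$ is excluded; hence $b\geq 1$, $a\leq 1$, and $N=p^r$ or $2p^r$.

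Next I would recast the degree condition in terms of reduced Cartan orbits. By $\S2.6$ and \cite{BC1}, over $K(j(E))$ the degree of the Weber function field is the size of the $\overline{C_N(\OO)}$-orbit of $P$, and writing $N=2^ap^r$ with $a\leq 1$ we have $C_N(\OO)=C_2(\OO)\times C_{p^r}(\OO)$. For $p\geq 7$ (so $\OO^{\times}=\{\pm 1\}$) the image $q_N(\OO^{\times})$ lies entirely in the $p$-factor, since $-1\equiv 1\pmod 2$; thus $\overline{C_N(\OO)}\cong C_2(\OO)\times\overline{C_{p^r}(\OO)}$ and the orbit of $P$ factors as a product of the $C_2(\OO)$-orbit of its $2$-part and the $\overline{C_{p^r}(\OO)}$-orbit of its $p$-part. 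Because $p$ ramifies in $K$ we have $\leg{\Delta_K}{p}=0\neq 1$ and $m\geq 1$, so Proposition \ref{CMprop} (through Lemmas \ref{orbit1} and \ref{orbit2}) shows the $p$-part is degree-multiplicative for $X_1(p^r)\to X_1(p^{\min(r,m)})$.

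The dichotomy in the statement then comes entirely from the size of the $C_2(\OO)$-orbit of the $2$-part $P_2$, computed from the structure of $\OO/2\OO$. When $\leg{\Delta}{2}=-1$ the group $C_2(\OO)\cong\F_4^{\times}$ permutes the three nonzero $2$-torsion points transitively, giving an orbit of size $3=\deg(X_1(2p^r)\to X_1(p^r))$; here the factor of $2$ carries full degree, so one may strip it off and descend to $X_1(\gcd(N,p^m))$. When $\leg{\Delta}{2}\neq -1$ the group $C_2(\OO)$ is trivial (split) or of order $2$ (ramified), and oddness of $\deg(x)$ forces the $C_2(\OO)$-orbit of $P_2$ to have size $1$; the factor of $2$ then carries no degree and must be retained, so one descends only to $X_1(\gcd(N,2p^m))$. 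In either case the product structure yields $\deg(x)=\deg(f)\deg(f(x))$ over $K(j(E))$.

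It remains to transfer this from $K(j(E))$ to $\Q$ and invoke Theorem \ref{LevelLowering}. Since $[\Q(x):\Q(j(E))]$ is odd while $[K(j(E)):\Q(j(E))]=2$, the field $\Q(j(E))(\mathfrak{h}(P'))$ attached to the image point cannot contain $K$ (else $2\mid\deg(f(x))\mid\deg(x)$), so the $K(j(E))$-orbit size already equals $[\Q(x):\Q(j(E))]$; comparing the two towers gives $\deg(x)=\deg(f)\deg(f(x))$ over $\Q$, and Theorem \ref{LevelLowering} finishes the proof. I expect the main obstacle to be precisely the $2$-torsion orbit analysis, and in particular the exceptional discriminant $\Delta=-3$: there $j(E)=0$, the unit group $\OO^{\times}$ has order $6$, the Weber function is $x\mapsto x^{3}$, and $q_N(\OO^{\times})$ surjects onto $C_2(\OO)$, so the reduced Cartan no longer splits as a product and the size-$3$ computation above must be replaced by a direct orbit count. (The remaining unit-heavy discriminants $-4$, $-8$, and $-16$ cause no trouble, as they are already excluded by the shape $K=\Q(\sqrt{-p})$.)
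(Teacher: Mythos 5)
Your proposal follows essentially the same route as the paper's proof: Aoki's Cor.~9.4 for the shape of $N$ and $K$, Proposition \ref{CMprop} for the $p$-primary part, a Cartan-orbit computation at $2$ (which the paper outsources to Lemmas 7.1 and 7.6 and Proposition 7.7 of \cite{BC1} in case (i) and to Theorems 6.2 and 6.6 of \cite{BC2} in case (ii)) to obtain $\deg(x)=\deg(f)\deg(f(x))$, and Theorem \ref{LevelLowering} to descend. The one loose end is the $\Delta=-3$ case you flag without completing: the direct count does go through there (for $N=2\cdot 3^{r}$ with $r\geq 2$ the unit group $\mu_{6}$ acts freely on both $P$ and $2P$, so each reduced orbit is one sixth of the unreduced one and the ratio of $3$ survives), and the paper's citations to \cite{BC1} handle this case uniformly.
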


\begin{rmk}
As noted above, for explicit values of $m$, see Propositions 6.4 and 6.8 in \cite{BC2}; also \cite{kwon99}.
\end{rmk}

\begin{proof}
Let $x=[E,P] \in X_1(N)$ be an isolated point of odd degree associated to an elliptic curve with CM by the order in $K$ of discriminant $\Delta$. Note there are no isolated points on $X_1(2)$ or $X_1(4)$ as they have genus 0. Thus by \cite[Cor. 9.4]{aoki95}, the assumption of odd degree implies $N=p^r$ or $2p^r$ where $K=\Q(\sqrt{-p})$ and $p \equiv 3 \pmod{4}$ is prime. If $N=p^r$, we may assume $m<r$, for otherwise the statement is clearly true. Since $\leg{\Delta_K}{p}=0$, we have $m \geq 1$ (see for example \cite[Prop. 6.4]{BC2}), so we may apply Proposition \ref{CMprop}. Then by Theorem \ref{LevelLowering}, $x$ maps to an isolated point on $ X_1(p^m)$, and the statement holds.

Next, suppose $N=2p^r$. Note we may assume $r\geq1$, and if $p=3$, we may assume $r>1$ since $X_1(6)$ has genus 0. If $\leg{\Delta}{2}=-1$, then by \cite[Lemma 7.1, Proposition 7.7]{BC1}, the size of the $C_N(\mathcal{O})$-orbit of $P$ is equal to $3$ times the size of the $C_{p^r}(\mathcal{O})$-orbit of $2P$. Lemma 7.6 of \cite{BC1} shows that 
\[
[K(j(E))(\mathfrak{h}(P)):K(j(E))]=3 \cdot [K(j(E))(\mathfrak{h}(2P)):K(j(E))].
\] Since we have assumed $[\Q(j(E))(\mathfrak{h}(P)):\Q]$ has odd degree, it follows that
\[
\deg(x)=\deg(g)\cdot \deg(g(x))
\]
where $g:X_1(2p^r) \rightarrow X_1(p^r)$ is the natural map. By Theorem \ref{LevelLowering}, $g(x) \in X_1(p^r)$ is isolated. If $m \geq r$, we are done. Otherwise the argument follows as before.

If $\leg{\Delta}{2}\neq -1$, we may assume $m<r$. Then \cite[Theorem 6.2, 6.6]{BC2} shows there is a point in $X_1(2)(\Q(j(E))$ corresponding to $E$, and the assumption that $x$ has odd degree forces $[E,p^rP] \in X_1(2)$ to have degree $[\Q(j(E):\Q]$. Thus by Proposition \ref{CMprop}, we have $\deg(x)=\deg(g(x)) \cdot \deg(g)$ where $g:X_1(2p^r) \rightarrow X_1(2p^m)$ is the natural map. By Theorem \ref{LevelLowering}, $g(x) \in X_1(2p^m)$ is isolated, as desired. \end{proof}

\begin{table}[]

\label{undefined}
\begin{tabular}{c|c|c|c|c|c}
$\Delta$ & $p$ & $m$ & genus of $X_1(p^m)$ & $d_{\Delta}$ & genus of $X_1(2p^m)$ if $\leg{\Delta}{2} \neq -1$ \\ \hline
-3       & 3   & 2   & 0                   &      3        & --                                  \\ \hline
-7       & 7   & 1   & 0                   &      3        & 1                                     \\ \hline
-11      & 11  & 1   & 1                   &     5        & --                                    \\ \hline
-12      & 3   & 1   & 0                   &    1          & 0                                      \\ \hline
-19      & 19  & 1   & 7                   &      9        & --                                     \\ \hline
-27      & 3   & 3   & 13                  &      9        & --                                     \\ \hline
-28      & 7   & 1   & 0                   &      3       & 1                                      \\ \hline
-43      & 43  & 1   & 57                  &     21         & --                                    \\ \hline
-67      & 67  & 1   &  155    &     33         & --                                     \\ \hline
-163     & 163 & 1   &  1027  &      81        & --                                   
\end{tabular}
\vspace{.5cm}
\caption{Let $m$ be as in Theorem \ref{CMmapThm} and let $d_{\Delta}$ be the least degree of a $\Delta$-CM point on $X_1(p^m)$. For values of $m$, see \cite{kwon99} and \cite[Proposition 6.4]{BC2}. The value $d_{\Delta}$ is given in \cite[Theorem 7.1]{BC2}.}
\end{table}

\begin{cor} \label{isolCM}
There are no isolated points $x \in X_1(N)$ of odd degree corresponding to an elliptic curve with CM by the order of discriminant $\Delta \in \{-3, -4, -7, -8, -11, -12, -16, -19, -27, -28 \}$.
\end{cor}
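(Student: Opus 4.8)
The plan is to run each of the ten discriminants through Theorem~\ref{CMmapThm} and then rule out isolated odd-degree points on the resulting low-level curve using the table above. First observe that Theorem~\ref{CMmapThm} permits an isolated point of odd degree only when $K = \Q(\sqrt{-p})$ for a prime $p \equiv 3 \pmod{4}$. The discriminants $\Delta \in \{-4,-8,-16\}$ have $K = \Q(i)$ or $K = \Q(\sqrt{-2})$, neither of which is of this form, so these three admit no isolated points of odd degree and may be discarded at once. For each remaining $\Delta \in \{-3,-7,-11,-12,-19,-27,-28\}$ we have $K = \Q(\sqrt{-p})$ with $p \equiv 3 \pmod{4}$, so Theorem~\ref{CMmapThm} applies: with $m$ the largest integer admitting $y \in X_0(p^m)(\Q(j(E)))$ above $j(E)$, the point $x$ maps to an isolated point $f(x)$ on $X_1(\gcd(N,p^m))$ when $\leg{\Delta}{2} = -1$, and on $X_1(\gcd(N,2p^m))$ when $\leg{\Delta}{2} \neq -1$. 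The relevant $p$, $m$, the genera of the target curves, and the least degree $d_\Delta$ of a $\Delta$-CM point are recorded in the table.

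Next I would dispose of every genus $0$ target: such a curve has no isolated points by Theorem~\ref{thm:FiniteIsolated}, so the existence of $f(x)$ is an immediate contradiction. This settles $\Delta = -3$ (target $X_1(9)$), $\Delta = -12$ (target $X_1(3)$ or $X_1(6)$), and the odd-$N$ branch of $\Delta \in \{-7,-28\}$ (target $X_1(7)$). For the positive-genus targets I would apply Riemann--Roch to the degree-$d$ divisor $D$ underlying $f(x)$, where $d \geq d_\Delta$: whenever $d \geq g+1$ we obtain $\ell(D) \geq d-g+1 \geq 2$, so $|D|$ is positive-dimensional, $f(x)$ fails to be $\mathbb{P}^1$-isolated, and hence is not isolated. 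This inequality is satisfied for $\Delta = -11$ ($X_1(11)$, $g=1$, $d_\Delta = 5$), for $\Delta = -19$ ($X_1(19)$, $g=7$, $d_\Delta = 9$, so $\ell(D)\geq 3$), and for the even-$N$ branch of $\Delta \in \{-7,-28\}$ ($X_1(14)$, $g=1$): here $X_1(14)$ has no non-cuspidal rational point by Mazur, so odd degree forces $d \geq 3 > g$, again giving $\ell(D)\geq 2$.

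The one case where this uniform bound breaks down, and the main obstacle, is $\Delta = -27$, whose only problematic target is $X_1(27)$, of genus $13$, while $d_\Delta$ is merely $9$; here $d-g+1$ is negative and Riemann--Roch yields nothing. Reducing directly to $X_1(9)$ does not work: a short computation with the Cartan orbits of Lemma~\ref{orbit1} shows that the degree-$9$ point lies above a point of degree $3$ on $X_1(9)$ with relative degree $3$, whereas $\deg(X_1(27) \to X_1(9)) = 9$, so the degree-multiplicativity hypothesis of Theorem~\ref{LevelLowering} fails. Instead I would pass through the intermediate modular curve $X_\Delta(27)$ lying strictly between $X_1(27)$ and $X_0(27)$, exactly as in the proof of Proposition~\ref{25lem}: a diamond-orbit computation should exhibit $f(x)$ as a degree-$3$ point on this curve of smaller genus with the hypothesis of Theorem~\ref{LevelLowering} now satisfied, after which a genus bound rules out isolation. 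Should a clean genus estimate prove awkward, the fallback is a direct Magma computation of $\ell(D)$ for an explicit degree-$9$ divisor on a model of $X_1(27)$, mirroring the $X_1(28)$ Riemann--Roch computation used for the non-CM $j$-invariant $3^3\cdot 13/2^2$; certifying $\ell(D) \geq 2$ would close the case. I expect pinning down the orbit structure on $X_\Delta(27)$, or equivalently certifying this Riemann--Roch dimension, to be the delicate step.
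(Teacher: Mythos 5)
Your handling of the nine discriminants other than $-27$ is essentially the paper's own argument: discard $\{-4,-8,-16\}$ because $K$ is not $\Q(\sqrt{-p})$ with $p\equiv 3\pmod 4$, push $x$ down to $X_1(\gcd(N,p^m))$ or $X_1(\gcd(N,2p^m))$ via Theorem \ref{CMmapThm}, kill the genus-$0$ targets outright, and use $\dim L(D)\geq d-g+1\geq 2$ on the positive-genus targets ($X_1(11)$, $X_1(19)$, $X_1(14)$), exactly as in the paper's appeal to Table 1. You also correctly isolate $\Delta=-27$ as the one case where the degree-versus-genus bound fails.

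In that remaining case there is a genuine gap and a divergence. The gap: you only treat the degree-$9$ point on $X_1(27)$, but the corollary must exclude isolated points of \emph{every} odd degree with this $j$-invariant, not just the minimal one. The paper first computes division polynomials to show that an odd-degree point on $X_1(27)$ with $j=-2^{15}\cdot 3\cdot 5^3$ has degree $9$ or $243$, disposes of degree $243$ by Riemann--Roch, and only then is left with the degree-$9$ point; without that enumeration your argument says nothing about a hypothetical degree-$27$ or degree-$81$ point. The divergence: your primary route through an intermediate curve $X_\Delta(27)$ is speculative and not what the paper does. Even granting that the degree-$9$ point maps with full relative degree $3$ to $X_\Delta(27)$ (itself requiring a Cartan-orbit computation you have not carried out), you would land on a degree-$3$ point on a curve whose genus you have not controlled; if that genus exceeds $2$, the Riemann--Roch bound again yields nothing and the case does not close. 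Your stated fallback, however, is precisely the paper's argument: construct an explicit degree-$9$ divisor $D$ on Sutherland's model of $X_1(27)$ from the Tate normal form with $(0,0)$ of order $27$ and verify in Magma that $\dim L(D)\geq 2$ (the paper finds dimension $3$), so the point is not $\mathbb{P}^1$-isolated and hence not isolated. The proposal is therefore correct in outline, but the $-27$ case needs the degree enumeration and should lead with the explicit Riemann--Roch computation rather than the intermediate-curve detour.
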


\begin{proof}
Let $x\in X_1(N)$ be an isolated point of odd degree corresponding to an elliptic curve of discriminant $\Delta \in \{-3, -4, -7, -8, -11, -12, -16, -19, -28 \}$. Since $X_1(2)$ and $X_1(3)$ have genus 0, we may assume $N>3$. By Theorem \ref{CMmapThm}, we may assume $\Delta \notin \{-4, -8, -16\}$, and $x$ maps to an isolated point $f(x)$ in $X_1(\gcd(N,p^m))$ if $\leg{\Delta}{2}=-1$ or in $X_1(\gcd(N,2p^m))$ if $\leg{\Delta}{2} \neq -1$ for $m,p$ as in the theorem statement. By Table 1, we see that the degree of $f(x)$ is larger than the genus of the curve, which means the dimension of the associated Riemann-Roch space is at least 2. Thus $f(x)$ is not $\mathbb{P}^1$-isolated and we have reached a contradiction.

{Now, let $x\in X_1(N)$ be an isolated point of odd degree corresponding to an elliptic curve with CM by the order of discriminant $\Delta=-27$. Then $j(x)=-2^{15}\cdot3\cdot5^3$, and by Theorem \ref{CMmapThm} and Table 1, $f(x) \in X_1(\gcd(N,3^3))$ is isolated where $f: X_1(N) \rightarrow X_1(\gcd(N,3^3))$ is the natural map. $X_1(3)$ and $X_1(9)$ are genus 0 and thus have no isolated points, so it suffices to show there are no isolated points of odd degree on $X_1(27)$ associated to this $j$-invariant. By computing division polynomials, we see that any point $x' \in X_1(27)$ of odd degree with $j(x')=-2^{15}\cdot3\cdot5^3$ has degree 9 or 243. Since $X_1(27)$ has genus 13, any point of degree 243 is not isolated by the Riemann-Roch Theorem, so we need only consider the point on $X_1(27)$ of degree 9.}

Since the Jacobian of $X_1(27)$ has rank 0 \cite[Lemma 1]{DerickxVanHoeij}, it suffices to show $x'$ is not $\mathbb{P}^1$-isolated. We do this by forming the associated divisor and computing its Riemann-Roch space.  First we find the Tate normal form of an elliptic curve $E(b,c)$ with $(0,0)$ of order 27. This is done by constructing a polynomial $f_{27} \in \Q[b,c]$ that vanishes when $(0,0)$ has order $27$ {on $E(b,c)$}, as in \cite[Lemma 2.4]{CompwithCM}. {Using $E(b,c)$, we find the associated point on a model of $X_1(27)$ computed by Sutherland \cite{SutherlandX1}. This allows us} to create a degree $9$ divisor $D$ over $\Q$ on $X_{1}(27)$, and {a Magma computation shows} that the Riemann-Roch space $L(D)$ over $\Q$ has dimension $3$. \href{http://users.wfu.edu/rouseja/isolated/X127.txt}{See the website of the third author for the Magma code used.}
\end{proof}

\begin{rmk} \label{finalCMjRmk}
Suppose $x\in X_1(N)$ is an isolated point of odd degree corresponding to an elliptic curve with CM by the order of discriminant $\Delta \in \{-43, -67, -163\}$. These discriminants correspond to elliptic curves with $j$-invariants $-2^{18}3^35^3$, $-2^{15}3^35^311^3$, and $-2^{18}3^35^323^329^3$, respectively. By Theorem \ref{CMmapThm}, $N=p^r$ or $2p^r$ where $p=43, 67,$ or $163$, respectively. Moreover, since $m=1$ in each case \cite{kwon99}, Theorem \ref{CMmapThm} shows $f(x) \in X_1(\gcd(N,p))$ is isolated, where $f:X_1(N) \rightarrow X_1(\gcd(N,p))$ is the natural map. Thus $-2^{18}3^35^3$, $-2^{15}3^35^311^3$, and $-2^{18}3^35^323^329^3$ are in $j(\mathcal{I}_{odd}) \cap \Q$ if and only if they correspond to an isolated point of odd degree on $X_1(p)$. In each case, the Jacobian of $X_1(p)$ has positive rank; see Proposition 6.2.1 in \cite{CES2003}. Thus to find $j(\mathcal{I}_{odd}) \cap \Q$, one must determine whether these points belong to an infinite family parametrized by a positive rank abelian subvariety of $\text{Jac}(X_1(p))$.
\end{rmk}

\appendix
\section{$3$-adic images of Galois}

{In \cite{3adicimages}, the authors determine the $3$-adic image of Galois
for every non-CM elliptic curve $E/\Q$ that has a rational $3$-isogeny. Every case that occurs arises from a genus $0$ modular curve with infinitely many rational points. The prime power level modular curves with infinitely many rational points for subgroups that contain $-I$ were determined by Sutherland and Zywina \cite{SutherlandZywina}.} The following table is an excerpt of the table from page 2 of the online supplement to \cite{SutherlandZywina} that specifies a label, the index, the level, generators, and
a map to a covering modular curve. {Here, $3B^{0}\mhyphen3a$, $3D^{0}\mhyphen3a$, $9B^{0}\mhyphen9a$, and  $9I^{0}\mhyphen9c$ denote the curves $X_0(3), X_0(3,3), X_0(9),$ and $X_1(9)$, respectively. For each $3$-adic image with level $3^{k}$, we also give the degrees on $X_{1}(3^{k})$ of each Galois orbit of points order $3^{k}$.\\

{
\begin{tabular}{lllllll}
label  & $N$ & map & covering group & $3^k$ & Orbit sizes\\
\hline
$3B^{0}\mhyphen3a$  & $3$ & $(t+3)^{3}(t+27)/t$ & $j$-line & 3 & $[1,3]$\\
$3D^{0}\mhyphen3a$  & $3$ & $729/(t^{3}-27)$ & $3B^{0}\mhyphen3a$ & 3 & $[1,1,2]$ \\
$9B^{0}\mhyphen9a$  & $9$ & $t(t^{2}+9t+27)$ & $3B^{0}\mhyphen3a$ & 9 & $[3,6,27]$ \\
$9C^{0}\mhyphen9a$  & $9$ & $t^{3}$ & $3B^{0}\mhyphen3a$ & 9 & $[9,27]$ \\
$9H^{0}\mhyphen9a$  & $9$ & $3(t^{3}+9)/t^{3}$  & $3D^{0}\mhyphen3a$ & 9 & $[9,9,18]$\\
$9H^{0}\mhyphen9b$  & $9$ & $3(t^{3} + 9t^{2} - 9t - 9)/(t^{3} - 9t^{2} - 9t + 9)$ & $3D^{0}\mhyphen3a$ & 9 & $[3,3,3,6,18]$\\
$9H^{0}\mhyphen9c$  & $9$ & $-6(t^{3}-9t)/(t^{3} + 9t^{2} - 9t - 9)$   & $3D^{0}\mhyphen3a$ & 9 & $[9,9,18]$\\
$9I^{0}\mhyphen9a$  & $9$ & $-6(t^{3}-9t)/(t^{3} - 3t^{2} - 9t + 3)$ & $9B^{0}\mhyphen9a$ & 9 & $[3,6,9,9,9]$\\
$9I^{0}\mhyphen9b$  & $9$ & $-3(t^{3} + 9t^{2} - 9t - 9)/(t^{3} + 3t^{2} - 9t - 3)$ & $9B^{0}\mhyphen9a$ & 9 & $[3,6,27]$ \\
$9I^{0}\mhyphen9c$  & $9$ & $(t^{3} - 6t^{2} + 3t + 1)/(t^{2} - t)$ & $9B^{0}\mhyphen9a$ & 9 & $[1,1,1,6,27]$\\
$9J^{0}\mhyphen9a$  & $9$ & $(t^{3} - 3t + 1)/(t^{2} - t)$ & $9C^{0}\mhyphen9a$ & 9 & $[3,3,3,27]$\\
$9J^{0}\mhyphen9b$  & $9$ & $-18(t^{2} - 1)/(t^{3} - 3t^{2} - 9t + 3)$ & $9C^{0}\mhyphen9a$ & 9 & $[9,9,9,9]$\\
$9J^{0}\mhyphen9c$  & $9$ & $3(t^{3} + 3t^{2} - 9t - 3)/(t^{3} - 3t^{2} - 9t + 3)$ & $9C^{0}\mhyphen9a$ & 9 & $[9,27]$\\
$27A^{0}\mhyphen27a$ & $27$ & $t^{3}$ & $9B^{0}\mhyphen9a$ & 27 & $[27,54,243]$\\
\end{tabular}
}

\bibliographystyle{amsplain}
 \bibliography{bibliography}
 
\end{document}